\newcommand{\opn}{\operatorname}
\newcommand{\AND}{\mathrm{\; and \;}}
\newcommand{\sps}[1]{\left( #1 \right)}
\newcommand{\bk}[1]{\left\{ #1 \right\}}
\newcommand{\n}[1]{\lVert #1 \rVert}
\renewcommand{\a}[1]{\lvert #1 \rvert}
\newcommand{\eval}[1]{\left. #1 \right\rvert}
\newcommand{\tn}[1]{{\left\vert\kern-0.25ex\left\vert\kern-0.25ex\left\vert #1 
    \right\vert\kern-0.25ex\right\vert\kern-0.25ex\right\vert}}
\let\da\downarrow
\newcommand{\Lawvere}{{\mathrm{Lawvere}}}
\newcommand{\Linton}{{\mathrm{Linton}}}
\DeclareMathOperator{\id}{id}
\DeclareMathOperator{\cod}{cod}
\DeclareMathOperator{\ev}{ev}
\DeclareMathOperator{\ins}{ins}
\DeclareMathOperator{\Ob}{Ob}
\DeclareMathOperator{\Arr}{Arr}
\newcommand{\sint}{\smallint}
\newcommand{\R}{\mathbb{R}}
\newcommand{\N}{\mathbb{N}}
\let\ms\mathscr
\newcommand{\mc}[1]{\mathcal{#1}}
\providecommand{\comment}[1]{}
\newcommand{\cp}{\circ}
\newcommand{\into}{\hookrightarrow}
\renewcommand{\bar}{\overline}
\newcommand{\bbar}[1]{\overline{\overline{#1}}{}}
\let\al\alpha
\let\b\beta
\renewcommand{\k}{\kappa}
\let\d\delta
\let\e\epsilon
\let\io\iota
\let\Si\Sigma
\let\si\sigma
\let\ti\widetilde
\let\hat\widehat
\let\ld\lambda
\let\lp\Delta
\let\G\Gamma
\let\cd\cdot
\newcommand{\sli}{\mathcal{L}^\infty}
\providecommand{\ot}{\otimes}
\newcommand{\brot}{\mathbin{\bar{\otimes}}}
\newcommand{\tm}{\times}
\newcommand{\thnorm}[1]{|\kern-1pt|\kern-1pt| #1 |\kern-1pt|\kern-1pt|}
\theoremstyle{remark}
\newtheorem*{remark*}{Remark}
\theoremstyle{definition}
\newtheorem*{openproblem*}{Open Problem}
\newtheorem*{dfn*}{Definition}
\newtheorem*{example*}{Example}
\newtheorem{definition}{Definition}[section]
\newtheorem*{definition*}{Definition}
\theoremstyle{plain}
\newtheorem*{theorem*}{Theorem}
\newtheorem*{proposition*}{Proposition}
\newtheorem*{conjecture*}{Conjecture}
\newtheorem{theorem}{Theorem}[section]
\newtheorem{proposition}[theorem]{Proposition}
\newtheorem*{lemma*}{Lemma}
\newtheorem{lemma}[theorem]{Lemma}
\newtheorem*{corollary*}{Corollary}
\newtheorem{corollary}[theorem]{Corollary}
\theoremstyle{definition}
\theoremstyle{remark}
\numberwithin{subcase}{case}
\newcommand{\qq}{\quad\quad}
\tikzset{commutative diagrams/.cd,
   mysymbol/.style={start anchor=center,end anchor=center,draw=none},
   sho/.style={shorten >=#1,shorten <=#1},
   sca/.style n args={3}{
      decoration={
    markings,
    mark=at position (1-#1)/2*\pgfdecoratedpathlength
      with {\coordinate (#2);},
    mark=at position (1+#1)/2*\pgfdecoratedpathlength
      with {\coordinate (#3);},
    },
  postaction=decorate,
  },
  raa/.style={arrows=Rightarrow}
}}
\newcommand\car[3][]{%
  \arrow[mysymbol]{#2}[description, #1]{#3}}
\newcommand\cart[1]{\car{#1}{\opn{cart}}}
\newcommand\opcart[1]{\car{#1}{\opn{trac}}}
\newcommand\armd{\ar[mapsto]{d}}
\newcommand{\defcat}[2]{\DeclareMathOperator{#1}{\mathbf{#2}}}
\defcat{\Set}{Set}
\defcat{\MonFib}{MonFib}
\defcat{\MonFIB}{MonFIB}
\defcat{\Prof}{Prof}
\defcat{\Ci}{C^{\infty}}
\defcat{\linf}{\ell^{\infty}}
\defcat{\Lcvx}{Lcvx}
\defcat{\Pol}{Pol}
\defcat{\Con}{Con}
\defcat{\PROF}{PROF}
\defcat{\The}{Th}
\defcat{\Mod}{Mod}
\defcat{\Comod}{Comod}
\defcat{\SET}{SET}
\defcat{\TOPOS}{TOPOS}
\defcat{\Sketch}{Sketch}
\defcat{\MonSketch}{MonSketch}
\defcat{\MonCat}{MonCat}
\defcat{\MonCAT}{MonCAT}
\defcat{\Adj}{Adj}
\defcat{\SKETCH}{SKETCH}
\defcat{\SkFib}{SkFib}
\defcat{\SkOpFib}{SkOpFib}
\defcat{\SkBiFib}{SkBiFib}
\defcat{\ThFib}{ThFib}
\defcat{\OpFib}{OpFib}
\defcat{\BiFib}{BiFib}
\defcat{\OpFIB}{OpFIB}
\defcat{\Cat}{Cat}
\defcat{\ICAT}{ICAT}
\defcat{\DCAT}{DCAT}
\defcat{\Cart}{Cart}
\defcat{\Site}{Site}
\defcat{\FPCat}{FPCat}
\defcat{\CAT}{CAT}
\defcat{\FIB}{FIB}
\defcat{\Fib}{Fib}
\defcat{\Stack}{Stack}
\defcat{\STACK}{STACK}
\defcat{\Vect}{Vect}
\defcat{\Borel}{Borel}
\defcat{\Lin}{Lin}
\defcat{\FinSet}{FinSet}
\defcat{\FreeLin}{FreeLin}
\defcat{\FreeVect}{FreeVect}
\defcat{\FinVect}{FinVect}
\defcat{\Ban}{Ban}
\defcat{\SepBan}{SepBan}
\defcat{\Space}{Space}
\defcat{\Meas}{Meas}
\defcat{\Sh}{Sh}
\defcat{\Psh}{Psh}
\defcat{\Ball}{Ball}
\defcat{\FreeConvex}{FreeConvex}
\defcat{\talg}{\mc T-Alg}
\defcat{\salg}{\mc S-Alg}
\newcommand{\aln}{\aleph_0}
\newbox\gnBoxA
\newdimen\gnCornerHgt
\newdimen\gnArgHgt
\def\uc#1{%
\setbox\gnBoxA=\hbox{$#1$}%
\gnArgHgt=\ht\gnBoxA%
\ifnum     \gnArgHgt<\gnCornerHgt \gnArgHgt=0pt%
\else \advance \gnArgHgt by -\gnCornerHgt%
\fi \raise\gnArgHgt\hbox{$\ulcorner$} \box\gnBoxA %
\raise\gnArgHgt\hbox{$\urcorner$}}
\let \phi\varphi
\newcommand{\rra}[1]{\xrightarrow{\;\;#1\;\;}}
\newextarrow{\xrra}{{20}{20}{20}{20}}
   {\bigRelbar\bigRelbar{\bigtwoarrowsleft\rightarrow\rightarrow}}
\newcommand{\sar}[3][]{\ar[phantom,sca={0.4}{start#1}{end#1}]{#2}\ar[Rightarrow,from=start#1,to=end#1,"#3"]}
\let\rla\leftrightarrows
\let\re\relax
\begin{document}
\title{Algebraic theories and commutativity in a sheaf topos}
\thanks{This material is based upon work supported by the National Science Foundation under Award No. DMS-1602279 and Award No. DMS-1246999}
\author{Boaz Haberman}
\maketitle

\begin{abstract}
For any site of definition $\mc C$ of a Grothendieck topos $\mc E$, we define a notion of a $\mc C$-ary Lawvere theory $\tau: \ms C \to \ms T$ whose category of models is a stack over $\mc E$. Our definitions coincide with Lawvere's finitary theories when $\mc C=\aln$ and $\mc E = \Set$.  We construct a fibered category $\Mod^{\ms T}$ of models as a stack over $\mc E$ and prove that it is $\mc E$-complete and $\mc E$-cocomplete. We show that there is a free-forget adjunction $F \dashv U: \Mod^{\ms T} \rla \ms E$. If $\tau$ is a commutative theory in a certain sense, then we obtain a ``locally monoidal closed'' structure on the category of models, which enhances the free-forget adjunction to an adjunction of symmetric monoidal $\mc E$-categories. Our results give a general recipe for constructing a monoidal $\mc E$-cosmos in which one can do enriched $\mc E$-category theory. As an application, we describe a convenient category of linear spaces generated by the theory of Lebesgue integration.
\end{abstract}

\section{Introduction}
\subsection{Lawvere's $\aln$-ary theories}
In this paper, we develop the notion of an algebraic theory internal to a topos of sheaves, and we prove some basic properties. To motivate the definitions, recall that in $\Set$, a finitary Lawvere theory consists of an additive identity-on-objects functor
\[\tau: \aln \to \mc T,\]
where $\aln$ is the category of finite sets and $\mc T$ is a category with finite sums whose objects coincide with the objects of $\aln$. A model of the theory is a presheaf on $\mc T$ which preserves finite products. If $[\mc A, \mc B]_\tm$ denotes the category of all functors from $\mc A$ to $\mc B$ which preserve finite products, then we have
\[\Mod^{\mc T} = [\mc T^{op},\Set]_\tm \into [\mc T^{op},\Set].\]
An {\em algebraic category} is one which is equivalent to $\Mod^{\mc T}$ for some $\mc T$. 

To inform what follows, we recall some well-known facts about algebraic categories. First, every theory $\tau: \aln \to \mc T$ determines a free-forget adjunction
\[F \dashv U: \Mod^{\mc T} \rla \Set.\]
The right adjoint $U$ is conservative, which means that an object of $\Mod^{\mc T}$ is essentially a set with some extra algebraic structure. The left adjoint sends a set $X$ to the free model $FX$ generated by $X$.

Now, the theory $\tau$ can be recovered from the adjunction $F\dashv U$ using the isomorphism
\[\tau n \cong F n.\]
Lawvere's functorial semantics of algebraic theories can thus be summarized as follows: an algebraic theory is identified with the category of finitely-generated free models, and a model of the theory is a product-preserving presheaf on the category of free models.

Next, for any algebraic theory $\tau$, the inclusion $\Mod^{\mc T} \to [\mc T^{op},\Set]$ admits a left adjoint. In particular, the algebraic category $\Mod^{\mc T}$ is complete and cocomplete. 

Finally, we consider commutative theories in which the operations commute which each other. A theory is commutative if and only if the category $\mc T$ admits a tensor product such that the functor $\tau: \aln \to \mc T$ is strict monoidal, i.e. 
\[\tau 1 = I \qq \tau(m \times n) = \tau m \ot \tau n.\]
for $m,n \in \aln$ (for example, for a commutative ring $R$, the theory of $R$-modules has this property). It is well-known that the category of models of a commutative theory admits a {\em closed} symmetric monoidal structure with coherent isomorphisms
\[F1 \cong I \qq F(X \times Y) \cong F X \ot F Y\]
for $X,Y \in \Set$. In other words, the free-forget adjunction $F \dashv U$ is monoidal with respect to the cartesian product in $\Set$.

The purpose of this paper is to generalize all of these facts about $\aln$-ary theories with models in $\Set$ to the setting of $\mc C$-ary theories with models in $\mc E$, where the category of arities $\mc C$ is a standard site and the category of spaces $\mc E$ is the topos of sheaves on $\mc C$.

\subsection{$\mc C$-ary Lawvere theories}
The 2-category of ordinary categories embeds into the 2-category of fibered $\aln$-categories. We can define finite products and sums in fibered $\aln$-categories in purely fibrational terms.

Let $\mc C$ be any category with finite limits. We can view the objects of such a category as potential arities for algebraic operations. To define Lawvere semantics for $\mc C$-ary theories, we simply translate the definitions above into the language of fibered $\aln$-categories and systematically replace $\aln$ with $\mc C$.  

Recall that $\mc C$ itself determines a fibered $\mc C$-category, denoted by $\ms C$, whose fibers are the slice categories
\[\ms C^I := \mc C_{/I}.\]
A $\mc C$-ary theory will then be an identity-on-objects $\mc C$-functor
\[\tau: \ms C \to \ms T\]
that preserves $\mc C$-ary sums. To define the category of models for such a theory, we need to replace the embedding $j: \aln \into \Set$ with a $\mc C$-additive functor $j: \ms C \to \ms E$. If $\mc C$ is equipped with a subcanonical Grothendieck topology $J$, then a natural candidate for $\ms E$ is the stack of toposes whose fibers are defined by
\[\ms E^I = \Sh(\mc C, J)_{/yI},\]
where $y: \mc C \to \Sh(\mc C, J)$ is the Yoneda embedding. Note that $y$ extends to a $\mc C$-additive functor that plays the role of $j$ in this story.

We can now define the category of Lawvere models for $\ms T$ in $\ms E$ by
\[\Mod^{\ms T}_{\Lawvere} = [\ms T^{op}, \ms E]_{\tm} \into [\ms T^{op}, \ms E].\]
In words, a cartesian $\mc C$-functor in $[\ms T^{op}, \ms E]$ is a model of the theory $\ms T$ if it preserves $\mc C$-ary products.

\subsection{$\mc C$-ary Linton theories}
In the preceding paragraph, we have outlined a completely fibrational approach to universal algebra in $\mc E$, which is defined in terms of sums and products indexed by objects of $\mc C$. We now outline an alternative set-theoretic way to define the category of models for a $\mc C$-ary theory, due to Linton.

Let $\tau: \mc C \to \mc T$ be an idenity-on-objects functor, and consider the restriction functor
\[\tau^*: [\mc T^{op},\Set] \to [\mc C^{op},\Set].\] 
Given a Grothendieck topology on $\mc C$, we define the category of Linton models of $\mc T$ in $\Sh(\mc C)$ by
\[\Mod^{\mc T}_{\Linton} = (\tau^*)^{-1} \Sh(\mc C) \into [\mc T^{op},\Set].\]
In words, a presheaf on $\mc T$ is a Linton model if its restriction along $\tau$ is a sheaf on $\mc C$. 

In this setup, the functor $\tau$ need not be additive in any sense, but it is natural to require that the representable presheaves on $\mc T$ be models of the theory. In this case we say that $\tau$ is $\mc C$-continuous.

\subsection{Intrinsic and extrinsic semantics}
To illustrate the two notions of algebraic theory described above, we return to the example of $\aln$. Consider the category $\aln$ of finite sets as a site whose covering families are all finite sum diagrams. An identity-on-objects functor $\tau: \aln \to \mc T$ is finitely additive precisely when it is a Linton theory (the representables on $\mc T$ restrict to sheaves on $\aln$). Similarly, a presheaf $M: \mc T^{op} \to \Set$ restricts to a sheaf on $\aln$ precisely when it preserves finite products. In this case Lawvere semantics are equivalent to Linton semantics.

In general, the $\mc C$-additivity condition and the $\mc C$-continuity condition are completely independent. In particular, $\mc C$-additivity of $\tau$ is an intrinsic category-theoretic notion, while $\mc C$-continuity (of the ordinary functor $\tau^1: \ms C^1 \to \ms T^1$, say) is a set-theoretic notion which, moreover, depends on the chosen topology for $\mc C \cong \ms C^1$. For the same reason, the category $\Mod^{\ms T^1}_{\Linton}$ of Linton models in $\ms E^1$ will generally be different from the category $\Mod^{\ms T}_{\Lawvere}(\ms E)$ of Lawvere models in $\ms E$.

We will show, however, that when $\tau$ is $\mc C$-additive and each $\tau^I$ is $\ms C^I$-continuous, then the two algebraic categories we have defined are equivalent. Thus, we can freely pass between Lawvere semantics (defined in terms of fibered categories) and Linton semantics (defined in terms of sheaves). 

To state this equivalence precisely we must sharpen the definitions a bit. Let $\ms T$ be a category with $\mc C$-ary sums. Lawvere semantics defines a category of models of $\ms T$ in any category with $\mc C$-ary products. We have
\[\Mod^{\ms T}_{\mathrm{Lawvere}}(\ms D) : = [\ms T^{op}, \ms D]_\tm.\]
That is, a model of the theory $\ms T$ in the fibered $\mc C$-category $\ms D$ is a fibered $\mc C$-functor from $\ms T^{op}$ to $\ms D$ which preserves fibered $\mc C$-products. Note that this definition does not require $\ms D$ to be the category of sheaves on $\mc C$.

Linton's semantics is defined in terms of sheaves. Suppose that $\tau: \ms C \to \ms T$ is a fibered functor, and that each  component $\tau^I$ is $\ms C^I$-continuous (with respect to the induced topology on $\ms C^I$). A model of $\ms T^I$ (in $\Sh(\ms C^I) \cong \ms E^I$) is a presheaf on $\ms T^I$ whose restriction to $\ms C^I$ is a sheaf. That is, we have
\[(\Mod^{\ms T^I}_{\mathrm{Linton}}) : = ((\tau^I)^*)^{-1} \Sh(\ms C^I).\]

We are now in a position to compare the two categories of models and state our main technical theorem.

\begin{definition}
Let $\mc C$ be a standard Grothendieck site, and let $\ms C$ be the codomain fibration. A standard $\mc C$-ary theory is a $\mc C$-additive identity-on-objects fibered $\mc C$-functor
\[\tau: \ms C \to \ms T\]
such that every representable presheaf on $\ms T^I$ restricts along $\tau^I$ to a sheaf on $\ms C^I$.
\end{definition}

The main technical theorem states for standard $\mc C$-ary theories, the two notions of model are equivalent.
\begin{theorem}
Let $\tau$ be a standard $\mc C$-ary theory. There is an equivalence of fibered $\mc C$-categories
\[\Mod^{\ms T^I}_{\mathrm{Linton}} \cong \Mod_{\mathrm{Lawvere}}^{\ms T}(\ms E^I),\]
where $\ms E$ is the canonical fibering of $\Sh(\mc C)$ over $\mc C$.
\end{theorem}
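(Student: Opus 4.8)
The plan is to unwind both sides of the asserted equivalence into a common description --- ``a sheaf on $\mc C$ equipped with a $\ms T^1$-algebra structure'' --- and to exhibit the comparison functor between them explicitly. First a reduction: since $\mc C$ has finite limits it has a terminal object $1$, and for each object $I$ the slice $(\mc C_{/I},J_I)$ is again a standard site with $\Sh(\mc C_{/I})\simeq\Sh(\mc C)_{/yI}=\ms E^I$ and with $\tau$ restricting to a standard $\mc C_{/I}$-ary theory $\tau^I\colon\ms C^I\to\ms T^I$. As the whole construction below is natural in $I$ and produces a fibered functor, it suffices to exhibit, for an arbitrary standard site $(\mc C,J)$, an equivalence $\Mod^{\ms T^1}_{\Linton}\simeq\Mod^{\ms T}_{\Lawvere}(\ms E)$; applying this to $\mc C_{/I}$ at its terminal object $\id_I$ recovers the fiber over $I$ of the desired equivalence, and naturality in $I$ assembles these into an equivalence of fibered $\mc C$-categories. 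Here $\ms T^1$ is the fiber over $1$, $\ms E$ the canonical fibering of $\Sh(\mc C)$ over $\mc C$, $\Mod^{\ms T}_{\Lawvere}(\ms E)=[\ms T^{op},\ms E]_\tm$, and $\Mod^{\ms T^1}_{\Linton}=((\tau^1)^*)^{-1}\Sh(\mc C)$.

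The first observation is that on the Lawvere side, product-preservation is controlled entirely by $\ms C$. Since $\tau$ is identity-on-objects and $\mc C$-additive, on each fiber $\ms T^I$ the objects coincide with those of $\ms C^I$, every such object $(f\colon K\to I)$ is the $\mc C$-ary sum $\Sigma_f\mathbf 1_K$ where $\mathbf 1_K$ is the terminal object of $\ms C^K$ (which $\tau$ fixes and which equals $!_K^*\mathbf 1_1$), and every $\mc C$-ary sum diagram in $\ms T$ is the $\tau$-image of one in $\ms C$. It follows that a fibered $\mc C$-functor $M\colon\ms T^{op}\to\ms E$ preserves fibered $\mc C$-products if and only if $M\tau^{op}\colon\ms C^{op}\to\ms E$ does: one direction is composition with $\tau^{op}$, which preserves fibered $\mc C$-products because $\tau$ preserves $\mc C$-ary sums; the other uses that, up to isomorphism, every $\mc C$-ary product cone in $\ms T^{op}$ is $\tau^{op}$ of one in $\ms C^{op}$. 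Hence $\Mod^{\ms T}_{\Lawvere}(\ms E)$ is the full subcategory of $[\ms T^{op},\ms E]$ on those $M$ for which $M\tau^{op}$ preserves fibered $\mc C$-products.

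Next I build the comparison. Given such an $M$, restrict it to the fiber over $1$ and take global sections: $\Phi(M):=\G\circ M^1\colon(\ms T^1)^{op}\to\Set$, where $\G=\Hom_{\Sh(\mc C)}(y1,-)$. Using that $M$ preserves $\mc C$-products and is fibered, one computes for $b=\tau^1K$ that $M^1(b)\cong\prod_{!_K}!_K^*X$ with $X:=M(\mathbf 1_1)\in\ms E^1=\Sh(\mc C)$, whence $(\tau^1)^*\Phi(M)=\bigl(K\mapsto\G(\prod_{!_K}!_K^*X)\bigr)=\bigl(K\mapsto X(K)\bigr)=X$, a sheaf; so $\Phi$ lands in $\Mod^{\ms T^1}_{\Linton}$. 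In the other direction, from $N\in\Mod^{\ms T^1}_{\Linton}$ with underlying sheaf $X_N:=(\tau^1)^*N$, define $\Psi(N)\colon\ms T^{op}\to\ms E$ by $\Psi(N)^I(\Sigma_f\mathbf 1_K):=\prod_f!_K^*X_N$ on objects, and on a morphism $t\colon\Sigma_f\mathbf 1_K\to\Sigma_g\mathbf 1_L$ of $\ms T^I$ by transporting the action of $N$ through the $\mc C$-ary sum adjunctions $\Sigma\dashv(-)^*$, reindexing, and the Beck--Chevalley isomorphisms $f^*\Sigma_g\cong\Sigma_{g'}(g'')^*$ that present $\ms T^I(\Sigma_f\mathbf 1_K,\Sigma_g\mathbf 1_L)\cong\ms T^K(\mathbf 1_K,\Sigma_{g'}\mathbf 1_{L'})$. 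Here the standardness hypothesis --- that representables on each $\ms T^I$ restrict to sheaves on $\ms C^I$ --- is what makes this transport well defined and sheaf-valued; equivalently, it makes the representable presheaves $\ms T^1(-,\tau^1K)$ into Linton models, and these, transported by $\Psi$, are exactly the free Lawvere models, matched back by $\Phi$. One then checks $\Phi\Psi\cong\id$ and $\Psi\Phi\cong\id$, and that everything is natural in $I$.

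The heart of the argument --- and the step I expect to be the main obstacle --- is the bundle of verifications attached to $\Psi$: that $\Psi(N)$ is well defined (independent of the chosen presentation $a=\Sigma_f\mathbf 1_K$), functorial on all of $\ms T$ including the non-base morphisms, genuinely \emph{fibered} and product-preserving (which forces the reindexing and Beck--Chevalley identities to interlock correctly), and that $\Phi$ and $\Psi$ are mutually inverse. Together these amount to the fibered form of the classical identification of functorial with operational semantics: a product-preserving fibered functor into the externalization $\ms E$ of $\Sh(\mc C)$ is rigidly determined by, and can be freely assembled from, an underlying sheaf equipped with a $\ms T^1$-algebra structure. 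One may push through the indicated diagram chase directly; alternatively --- and probably more cleanly --- one can isolate and prove once a general ``externalization lemma'' identifying $[\ms A^{op},\ms E]_{\mathrm{fib}}$, for any fibered $\mc C$-category $\ms A$, with a category of sheaves on a site built from the total category of $\ms A$, and then read off both the description of $\Mod^{\ms T}_{\Lawvere}(\ms E)$ from the second paragraph and its comparison with $\Mod^{\ms T^1}_{\Linton}$ as consequences. The remaining points --- the object-level bookkeeping in the second paragraph, the equivalence of the sheaf condition with the $\mc C$-product-preservation of $M\tau^{op}$, and the naturality in $I$ used in the opening reduction --- are routine.
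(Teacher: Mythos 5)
Your architecture matches the paper's: your $\Phi$ is exactly the paper's comparison $\al_J(M)=\ms E^{J\times J}(\lp_J,M^J-)$ viewed through the slice $\mc C_{/I}$ at its terminal object, and your object-level formula $\Psi(N)^I(\Sigma_f\mathbf 1_K)=\prod_f\,!_K^*X_N$ agrees with the paper's quasi-inverse $\b_J$, whose value on a test object $E$ is $\bar M(\ms T_{\pi_J\cp E}\ms T^{\pi_I\cp E}T)$. The preliminary observations (standardness of the sliced theory, multiplicativity of $M$ being detected on $M\tau^{op}$, the computation that $(\tau^1)^*\Phi(M)\cong M(\mathbf 1_1)$) are correct. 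The problem is that everything you defer as ``the main obstacle'' or ``routine'' is precisely the content of the theorem, and it is not carried out. Most seriously, $\Psi(N)$ is never actually defined on morphisms: a morphism $t:\Sigma_f\mathbf 1_K\to\Sigma_g\mathbf 1_L$ in a fiber $\ms T^I$ does not live where $N$ does, and ``transporting the action of $N$ through the sum adjunctions and Beck--Chevalley isomorphisms'' is a gesture, not a construction. The paper avoids this issue by defining the value of the would-be Lawvere model pointwise on test objects, $M^I(T)(E)=\bar M(\ms T_{\pi_J\cp E}\ms T^{\pi_I\cp E}T)$, so that functoriality in $T$ (including the genuinely algebraic morphisms of $\ms T$ not coming from $\ms C$) and in $E$ is manifest; the remaining work is then the sheaf condition --- which is where standardness enters, via $\bar M(\tau^J-)$ being a sheaf and the transports $\ms T_{(-)},\ms T^{(-)}$ commuting with $\tau$ and being continuous --- and the Beck--Chevalley computations showing $M$ is cartesian and $\mc C$-multiplicative. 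In your setup these same verifications reappear, unperformed, as ``well-definedness, functoriality, fiberedness and product-preservation of $\Psi(N)$''; the suggested ``externalization lemma'' shortcut is itself unproved and of essentially the same difficulty (it is, in effect, the unrestricted version of the equivalence being claimed).

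A second, smaller gap sits in your opening reduction. To conclude an equivalence of \emph{fibered} $\mc C$-categories from fiberwise equivalences you must first exhibit the comparison as a cartesian $\mc C$-functor, i.e.\ prove the pseudonaturality squares commute up to coherent isomorphism; in the paper this is a separate nontrivial computation (the pasted pullback square yielding $\ms C^{\phi\times J}\lp_J\cong\ms C_{\phi\times K}\lp_K$), and ``naturality in $I$ assembles these'' does not substitute for it. Likewise the identification of the fiber $\Mod^{\ms T}_{\Lawvere}(\ms E^I)$ with the terminal-fiber model category over the sliced site requires the base-change equivalence $[\ms T^{op},\ms E^I]_\tm\simeq[\pi_I^*\ms T^{op},\ms E|_{\mc C_{/I}}]_\tm$ together with compatibility of multiplicativity, which you assert but do not prove. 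In short: the plan is sound and parallel to the paper's, but as written it is a proof outline whose deferred steps are exactly where the theorem's work (and its use of the standardness hypothesis) lies.
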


This equivalence turns out to be very useful, because in the Linton theory setting we can exploit set-theoretic tools such as the theory of sketches and locally presentable categories. In the Lawvere theory setting we can exploit category-theoretic tools; in particular, we use the fact that $\ms E$ is a stack over $\mc C$.

\subsection{Completeness and cocompleteness}
It is not hard to show that the $\mc C$-category $\Mod^{\ms T}$ of models is $\mc C$-complete and $\mc C$-cocomplete. In the case $\mc C = \aln$ this statement means that the category of models has finitary limits and colimits.

The category of models for an $\aln$-ary theory also has $\Set$-ary limits and colimits. Since a $\mc C$-ary theory has models in the topos $\mc E$, it is natural to ask whether the category of models has $\mc E$-ary limits and colimits.  

In order for this question to make sense, we need to extend $\Mod^{\ms T}$ to a fibered $\mc E$-category. That this is possible is our next result:
\begin{theorem}
The $\mc C$-category $\Mod^{\ms T}$ is a stack over the site $(\mc C, J)$.
\end{theorem}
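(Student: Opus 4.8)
The plan is to exhibit $\Mod^{\ms T}$ as a fibered full subcategory of a stack built out of $\ms E$, cut out by a condition that is local on the base, and then to invoke the principle that such subcategories are themselves stacks. This leans on the fact, recalled in the introduction, that the canonical fibering $\ms E$ of $\Sh(\mc C)$ over $\mc C$ is a stack; the main technical theorem enters only to make an alternative, Linton-style, route available. Throughout, the fiber of $\Mod^{\ms T}$ over $I\in\mc C$ is the full subcategory $[\ms T^{op},\ms E^I]_\tm\subseteq[\ms T^{op},\ms E^I]$ of fibered $\mc C$-functors preserving $\mc C$-ary products, and the transition functors are induced by the pullback functors $\ms E^I\to\ms E^J$ along $yf$ for $f\colon J\to I$; I write $\ms E^{\ms T^{op}}$ for the cotensor, i.e.\ the fibered $\mc C$-category with fibers $[\ms T^{op},\ms E^I]$ and these same transition functors.

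First I would establish that $\ms E^{\ms T^{op}}$ is a stack over $(\mc C,J)$. This is the general principle that a cotensor (power) of a stack by a $\mc C$-category is again a stack: given a $J$-cover $\{U_\al\to U\}$, a descent datum along the cover for fibered $\mc C$-functors $\ms T^{op}\to\ms E$, or for $\mc C$-natural transformations between two of them, can be evaluated at each object and each arrow of $\ms T$, producing descent data in $\ms E$; these are effective because $\ms E$ is a stack, and one then checks that the resulting pointwise assignments are genuinely $\mc C$-cartesian and $\mc C$-natural. I expect this last verification --- reconciling the coherence of the chosen cartesian lifts in $\ms T$ with the pointwise gluing in $\ms E$ --- to be the main obstacle; everything afterward is comparatively formal.

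Next I would show that the property of being a model is local on the base: if $\{U_\al\to U\}$ is a $J$-cover and $M\in[\ms T^{op},\ms E^U]$ restricts over each $U_\al$ to an object of $[\ms T^{op},\ms E^{U_\al}]_\tm$, then $M$ itself preserves $\mc C$-ary products. This rests on two consequences of $\ms E$ being a stack. The transition functor $\ms E^U\to\ms E^{U_\al}$ is pullback along $yf$, with $f\colon U_\al\to U$, between slices of the topos $\Sh(\mc C)$; it therefore has both adjoints, and in particular preserves $\mc C$-ary products, so for a distinguished $\mc C$-ary product diagram in $\ms T^{op}$ it carries the canonical comparison map witnessing whether $M$ sends that diagram to a product cone to the corresponding comparison map for $M|_{U_\al}$, which is invertible by hypothesis. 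And a morphism of $\ms E^U$ that becomes invertible after restriction along every member of a $J$-cover is already invertible --- this is the descent-for-morphisms half of the stack condition for $\ms E$. Hence the comparison map for $M$ is invertible, so $M$ is a model.

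Finally I would assemble the argument. By construction (or, if one prefers, via the main technical theorem), $\Mod^{\ms T}$ is the fibered full subcategory of the stack $\ms E^{\ms T^{op}}$ selecting the models; by the previous step this subcategory is closed under formation of descent objects. Descent for morphisms in $\Mod^{\ms T}$ is inherited from $\ms E^{\ms T^{op}}$, because the inclusion is fiberwise fully faithful, so the hom-presheaves in $\Mod^{\ms T}$ agree with those of $\ms E^{\ms T^{op}}$ and are sheaves. For objects: any descent datum in $\Mod^{\ms T}$ along a $J$-cover is, in particular, a descent datum in $\ms E^{\ms T^{op}}$, hence effective there by the first step, and its glued object lies in $\Mod^{\ms T}$ by the second step. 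Therefore $\Mod^{\ms T}$ is a stack over $(\mc C,J)$. Alternatively, the whole argument can be run on the Linton side, where by the main technical theorem a model over $I$ is a presheaf on $\ms T^I$ whose restriction along $\tau^I$ is a sheaf on $\ms C^I$, and ``being a sheaf'' is local on $I$ exactly because $\ms E$ is a stack; there, the role of the cotensor bookkeeping above is played by keeping track of how the category $\ms T^I$ depends on $I$.
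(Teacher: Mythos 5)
Your argument is correct, but it is organized differently from the paper's. The paper proves the statement (for any $\ms A$ with $\mc C$-products, in particular $\ms A=\ms T^{op}$) in one stroke via the exponential law for fibered categories: for a covering sieve $R$ of an object $P$ one has
\[
[R,[\ms A,\ms E]_\tm]\;\cong\;[\ms A,[R,\ms E]]_\tm\;\cong\;[\ms A,[P,\ms E]]_\tm\;\cong\;[P,[\ms A,\ms E]_\tm],
\]
the middle equivalence being precisely the statement that $\ms E$ is a stack; the multiplicativity constraint simply rides along the chain, because an equivalence of fibered categories with fibered products restricts to the $\tm$-subcategories. You instead split the proof into (i) the plain cotensor with fibers $[\ms T^{op},\ms E^I]$ is a stack, established by pointwise gluing of descent data in $\ms E$, and (ii) the model condition is local on the base, established from Beck--Chevalley for the right extensions under base change along the cover together with the fact that invertibility is local because $\ms E$ is a stack, followed by the formal closure argument for a fiberwise-full subfibration. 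Both routes rest on the same two inputs ($\ms E$ is a stack over $(\mc C,J)$; restriction preserves the fibered products), but the paper's currying argument packages away exactly the coherence verification you flag as your main obstacle in (i), and it applies verbatim to covering sieves for the canonical topology on $\mc E$, which is how the paper obtains in the same breath the extension of $\Mod^{\ms T}$ to a stack over $\mc E$ used later. What your version buys is an explicit, reusable lemma --- being a model is a local property --- which in the paper is implicit in the claim that the subscript $\tm$ is preserved by the equivalence $[R,\ms E]\simeq[P,\ms E]$; your justification of that lemma (restriction along a cover carries the product-comparison maps of $M$ to those of its restrictions and is jointly conservative) is sound. Your closing remark that the argument could also be run on the Linton side is consistent with the paper, which however proves this theorem on the Lawvere side and reserves the Linton description for the colimit constructions.
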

In particular, by the comparison lemma, this implies that there is an essentially unique extension of $\Mod^{\ms T}$ to a fibered $\mc E$-category, such that the resulting extension is a stack for the canonical topology on $\mc E$.

By abuse of notation, we denote the extension by $\Mod^{\ms T}$. In this extension, we have an $\mc E$-ary analogue of completeness and cocompleteness:

\begin{theorem}
The $\mc E$-category $\Mod^{\ms T}$ is fiberwise $\Set$-complete and cocomplete, and has $\mc E$-ary sums and products.
\end{theorem}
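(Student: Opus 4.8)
The plan is to transport the statement to the Linton picture, where sheaf theory and local presentability apply, then propagate it from the representable fibers to all fibers by using that $\Mod^{\ms T}$ is a stack, and from the base $\ms E$ to $\Mod^{\ms T}$ by using the free-forgetful adjunction $F \dashv U$. First I would treat the fiber over a representable object $yI$. By the main technical theorem this fiber is $\Mod^{\ms T^I}_{\Linton} = ((\tau^I)^*)^{-1}\Sh(\ms C^I) \subseteq [(\ms T^I)^{op},\Set]$, and since $\mc C$ is a standard site the category $\ms T^I$ is essentially small, so the ambient presheaf category is a Grothendieck topos. I would note that $\Mod^{\ms T^I}_{\Linton}$ is closed under all limits formed in $[(\ms T^I)^{op},\Set]$: the restriction $(\tau^I)^*$ has a left adjoint, hence preserves limits, and a limit of sheaves on $\ms C^I$ is a sheaf --- so this fiber is complete. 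For cocompleteness I would present it as a reflective subcategory: writing $\Sh(\ms C^I)$ as the class of objects orthogonal to the small set $S_I$ of covering-sieve inclusions for the induced topology, and using the adjunction $(\tau^I)_! \dashv (\tau^I)^*$, a presheaf on $\ms T^I$ is a model precisely when it is orthogonal to the small set $(\tau^I)_!(S_I)$ of maps of $[(\ms T^I)^{op},\Set]$; the localization of a presheaf topos at a small set of maps is reflective and locally presentable, so this fiber is cocomplete and locally presentable, colimits being reflections of presheaf colimits.

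For an arbitrary object $U$ of $\mc E$, since $\Mod^{\ms T}$ is a stack over $(\mc C,J)$ and its extension is the one furnished by the comparison lemma, the fiber over $U$ is equivalent to the pseudolimit of the diagram $(I,x) \mapsto \Mod^{\ms T^I}$ indexed by the (small) category of elements of $U$, with reindexing functors as transition maps. I would then check that every reindexing functor of $\Mod^{\ms T}$ is accessible and limit-preserving: on underlying objects it is a pullback in the topos $\mc E$, and by the stability axiom for $J$ it carries models to models, so it is a restriction functor between presheaf categories, which moreover is cocontinuous. Such a functor is an accessible right adjoint, and a pseudolimit of locally presentable categories along accessible right adjoints is again locally presentable; hence every fiber of $\Mod^{\ms T}$ is complete and cocomplete, which is the asserted fiberwise $\Set$-completeness and cocompleteness.

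For the $\mc E$-ary sums and products, the codomain fibration of the topos $\mc E$ has $\mc E$-ary sums and products satisfying Beck--Chevalley, $\mc E$ being locally cartesian closed. For $\Mod^{\ms T}$, each reindexing functor $f^*$ ($f \colon V \to U$ in $\mc E$) is, by the previous paragraph, an accessible functor between locally presentable categories that preserves limits and colimits, so it admits a left adjoint $\Sigma_f$ --- the $\mc E$-ary sum --- and a right adjoint $\Pi_f$ --- the $\mc E$-ary product --- by the adjoint functor theorem for locally presentable categories. For the Beck--Chevalley condition I would use that every model is a colimit, formed in $\Mod^{\ms T}$, of representable presheaves on $\ms T$ (the reflector is cocontinuous and fixes the representables, which are models because $\tau$ is standard); since $\Sigma_f$ and the reindexing functors are cocontinuous, it suffices to verify the Beck--Chevalley isomorphism on representables, where it follows from the corresponding statement in $\ms E$. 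Beck--Chevalley for $\Pi_f$ then follows from that for $\Sigma_f$ by the mate correspondence.

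The steps over the representable fibers are routine. The genuine obstacles are: first, identifying the fiber over a general $U \in \mc E$ and proving that the reindexing functors are accessible right adjoints --- the delicate point being that colimits of models are \emph{not} created by the forgetful functor to $\ms E$, so one cannot simply transfer colimits from the base and must argue through the localization picture; and second, verifying the Beck--Chevalley conditions for the $\mc E$-ary sums, which, unlike those for the products, cannot be read off from $\ms E$ and must be reduced to the generating family of representable presheaves on $\ms T$.
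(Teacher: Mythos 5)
Your reduction to the representable fibers, the orthogonality/localization argument for local presentability there, and the description of a general fiber as a pseudolimit over the category of elements are all sound; the last of these is in substance the same smallness device the paper uses, which replaces the large theory $\bbar{\ms T}$ by a theory $\ms T'$ with small fibers built from the slices $\mc C_{/P}$. The genuine gap is the assertion that the reindexing functors of $\Mod^{\ms T}$ are cocontinuous ``because they are restriction functors between presheaf categories.'' Colimits in the fibers of $\Mod^{\ms T}$ are \emph{not} computed in $\Psh^{\ms T_I}$: the fibers are reflective subcategories, and their colimits are reflections of presheaf colimits. Cocontinuity of the precomposition functor $\Psh^{\ms T_\phi}$ therefore does not pass to its restriction to models unless you also know that this restriction commutes with the reflectors, equivalently that the presheaf-level right adjoint (right Kan extension along $\ms T_\phi$) preserves models. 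That statement is essentially the entire content of the existence of $\mc E$-ary products, so your appeal to the adjoint functor theorem to produce $\Pi_f$ is circular as written: between locally presentable categories, cocontinuity of $f^*$ is equivalent to the existence of the right adjoint you are trying to construct. The same unproved cocontinuity is invoked a second time when you reduce the Beck--Chevalley condition for $\Sigma_f$ to free models, and there you would additionally need that every object of a fiber over a non-representable $U$ is a colimit of free models, which you assert but do not verify.

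The paper sidesteps exactly this point by building the products on the Lawvere side: the fibers of the extension are $[\ms A,\ms E^P]_\tm$, reindexing is postcomposition with $\ms E^\phi$, and its right adjoint is postcomposition with $\ms E_{\phi*}$, which preserves $\mc E$-products because it is itself a fibered right adjoint; the Beck--Chevalley isomorphisms are then inherited directly from the topos, with no adjoint functor theorem and no cocontinuity claim about reindexing on models. The $\mc E$-sums are handled separately, by the adjoint lifting theorem together with a mate argument (Proposition~\ref{leftadjoint}) applied to the small replacement theory $\ms T'$, which also yields the local presentability of the fibers. To salvage your route you would first have to prove that right Kan extension along $\ms T_\phi$ preserves models---for instance by transporting the question through the Lawvere--Linton equivalence, where it becomes precisely the statement about $\ms E_{\phi*}$ above---and only then run your adjoint-functor-theorem and density-of-free-models arguments.
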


The existence of $\mc E$-ary sums is closely related to the existence of free models. Recall that an $\aln$-ary theory $\tau: \aln \to \mc T$ is the restriction to $\aln$ of the free model functor $F: \Set \to \Mod^{\mc T}$. Analogously, a $\mc C$-ary theory $\tau: \ms C \to \ms T$ is the restriction to $\ms C$ of the free model $\mc E$-functor $F: \ms E \to \Mod^{\ms T}$. In particular, this functor exists:

\begin{theorem}
The forgetful $\mc E$-functor $U: \Mod^{\ms T} \to \ms E$ has a fibered $\mc E$-adjoint $F: \ms E \to \Mod^{\ms T}$, whose restriction to $\ms C$ is isomorphic to $\tau$.
\end{theorem}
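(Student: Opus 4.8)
\emph{Plan.} Build $F$ one fibre at a time from the Linton description of models, glue the fibres into a fibered adjunction over $\mc C$, then transport along $\mc C\to\mc E$ using the stack property. By the main technical theorem we may read each fibre of $\Mod^{\ms T}$ as $\Mod^{\ms T^I}_{\mathrm{Linton}}$, and then $U$ is the restriction-along-$\tau^I$ functor $U^I\colon\Mod^{\ms T^I}_{\mathrm{Linton}}\to\ms E^I\simeq\Sh(\ms C^I)$.

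\emph{The fibrewise adjoint.} Fix $I$. Two adjunctions live over $[\ms T^{I,\mathrm{op}},\Set]$. First, restriction $(\tau^I)^*\colon[\ms T^{I,\mathrm{op}},\Set]\to[\ms C^{I,\mathrm{op}},\Set]$ has left adjoint $(\tau^I)_!$, left Kan extension along $\tau^I$. Second, $\Mod^{\ms T^I}_{\mathrm{Linton}}$ is reflective in $[\ms T^{I,\mathrm{op}},\Set]$: ``$M$ is a model'' says $(\tau^I)^*M$ is a sheaf, which transposes (via $(\tau^I)_!\dashv(\tau^I)^*$) to orthogonality of $M$ against the \emph{set} of maps obtained by applying $(\tau^I)_!$ to the covering-sieve inclusions of $\ms C^I$; since $(\mc C,J)$ is a standard site this is a small orthogonality class in a presheaf category, hence reflective, with reflector $L^I$. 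Composing adjunctions, for $X\in\Sh(\ms C^I)$ and $M\in\Mod^{\ms T^I}_{\mathrm{Linton}}$,
\[\Hom\bigl(L^I(\tau^I)_!X,\,M\bigr)\;\cong\;\Hom\bigl((\tau^I)_!X,\,M\bigr)\;\cong\;\Hom\bigl(X,\,(\tau^I)^*M\bigr)\;\cong\;\Hom_{\Sh(\ms C^I)}\bigl(X,\,U^IM\bigr),\]
the last step by full faithfulness of $\Sh(\ms C^I)\hookrightarrow[\ms C^{I,\mathrm{op}},\Set]$. So $F^I:=L^I\circ(\tau^I)_!$, restricted to sheaves, is left adjoint to $U^I$. (One could instead quote the adjoint functor theorem — each fibre is locally presentable by the completeness/cocompleteness theorem, $U^I$ preserves limits and is accessible — but the explicit formula is what drives the gluing step.)

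\emph{Restriction to $\ms C$.} For $A\in\ms C^I$, subcanonicity makes $y^IA=y_{\ms C^I}(A)$ representable; Kan extension sends a representable to the representable at its image, so $(\tau^I)_!y_{\ms C^I}(A)=y_{\ms T^I}(\tau^I A)$; and by the defining property of a standard $\mc C$-ary theory this representable restricts to a sheaf, hence is already a model and is fixed by $L^I$. Thus $F^Iy^IA\cong y_{\ms T^I}(\tau^I A)$: precomposing $F^I$ with $y^I$ is $\tau^I$ followed by the fully faithful Yoneda embedding of $\ms T^I$ into its models, which is the claimed identification of the restriction of $F$ with $\tau$.

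\emph{Gluing, and the main obstacle.} As $U$ is a fibered functor, the $F^I$ glue to a fibered left adjoint over $\mc C$ iff every Beck--Chevalley mate $F^J\circ f^*\Rightarrow f^*\circ F^I$ ($f\colon J\to I$) is invertible; this is the heart of the matter. With $F^I=L^I(\tau^I)_!$ it splits into: reindexing preserves sheaves; reindexing commutes with the Kan extensions $(\tau^\bullet)_!$ — a Beck--Chevalley statement that holds because $\tau$ is a \emph{fibered} functor preserving $\mc C$-ary sums, so the pertinent squares of index categories commute and the reindexings are of restriction type; and reindexing commutes with the reflectors $L^\bullet$, since it takes the generating orthogonality maps for $I$ to those for $J$. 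Granting these, $F\dashv U$ is a fibered $\mc C$-adjunction; since $\ms E$ and $\Mod^{\ms T}$ are stacks over $(\mc C,J)$ and $U$ is a morphism of stacks, it extends essentially uniquely along $\mc C\to\mc E$ to a fibered $\mc E$-adjunction. I expect the Beck--Chevalley verification to be the only genuine work — the rest is bookkeeping with adjunctions and Kan extensions.
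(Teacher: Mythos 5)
Your proposal is correct and follows essentially the same route as the paper: a fibrewise free-model functor given by left Kan extension along $\tau^I$ followed by the reflection onto models (the paper obtains the reflection via limit sketches and the adjoint lifting theorem, you via a small orthogonality class), glued into a $\mc C$-fibered adjunction by the Beck--Chevalley/mate condition, which holds precisely because $\tau$ is cartesian and $\mc C$-additive, and then transported to $\mc E$ by applying $y_*$ / the stack property. Your explicit check on representables that $F$ restricts to $\tau$ is a useful addition, since the paper states that clause but leaves its verification implicit.
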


To construct $\mc E$-ary sums and products, we follow an idea of Linton. We consider the free model functor $F: \ms E \to \Mod^{\ms T}$ to be an $\mc E$-ary theory 
\[\bar \tau: \ms E \to \bar{\ms T}\]
where $\bar{\ms T}$ is the full image of $F$ and $\bar \tau$ is the corestriction of $F$ to $\bar{\ms T}$. There is an equivalence of categories
\[\Mod^{\ms T} \cong \Mod^{\bar{\ms T}}.\]
Applying our general machinery to the large theory $\bar{\tau}$ gives existence of $\mc E$-ary products and sums in $\Mod^{\bar{\ms T}}$, and hence in $\Mod^{\ms T}$ by equivalence. While this argument is conceptually simple, it contains some technical difficulties, because $\mc E$ is a large category.

\subsection{Tensor products}
Our next result concerns the construction of tensor products of models. By a tensor product, we mean a closed symmetric $\mc E$-monoidal structure on $\Mod^{\ms T}$, such that the free model $\mc E$-functor is strong $\mc E$-monoidal. This means that there are coherent isomorphisms
\[F^P(X\times_P Y) \cong F^P X \ot_P F^P Y \qq\qq F^P 1_P \cong I_P\]
for each $P \in \mc E$ and $X,Y \in \ms E^P$.

Recall that the arity category $\mc C$ is assumed to have finite limits. It follows that if $F$ is strong $\mc E$-monoidal, then the image of $\ms C$ under $F$ is closed under tensor products. Thus, if $\Mod^{\ms T}$ admits tensor products, then the original theory $\tau: \ms C \to \ms T$ can be made into a strong $\mc C$-monoidal functor.

Conversely, there is at most one symmetric $\mc C$-monoidal structure on the theory $\ms T$ such that the functor $\tau$ is strict monoidal. When such a structure exists, we say that the theory is {\em commutative}. For every commutative theory, the category of models admits a essentially unique tensor product.

We can now state our main result on commutative theories:
\begin{theorem}
If $\tau: \ms C \to \ms T$ is a commutative then there is a closed symmetric monoidal $\mc E$-category structure on $\Mod^{\ms T}$, such that the free model functor $F: \ms E \to  \Mod^{\ms T}$ is strong $\mc E$-monoidal.
\end{theorem}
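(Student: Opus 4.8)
Since $\tau$ is commutative, $\ms T$ already carries the (essentially unique) symmetric $\mc C$-monoidal structure $(\ot, I)$ for which $\tau$ is strict monoidal; in particular $\ot$ restricts on objects to the cartesian products of $\mc C$, and $I_P = \tau^P 1_P$. The plan is to push this structure forward to $\Mod^{\ms T}$ via Day convolution on the presheaf $\mc E$-category $[\ms T^{op},\ms E]$, and then to show that the convolution structure descends along the reflection onto the sub-$\mc E$-category of models $\Mod^{\ms T}=[\ms T^{op},\ms E]_\tm$. I will carry this out fiberwise — fixing $P\in\mc C$ and writing $\ms E$, $\ms T$, $\Mod^{\ms T}$ for the fibers over $P$, with products and exponentials taken in $\ms E$ — and only at the end check that the constructions are $\mc E$-functorial and stable under reindexing, so that they assemble by the fact that $\Mod^{\ms T}$ is a stack. (An essentially equivalent route is to show that $U$ is $\mc E$-enriched monadic with a \emph{commutative} $\mc E$-monad $UF$ and to invoke the fibered internal analogue of Kock's theorem on commutative monads; the fiberwise inputs are the same.)

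The first step is to construct the internal hom. For models $M,N$, let $[M,N]$ be the subobject of the exponential $N^{UM}$ — which is itself a model, with operations computed pointwise from $N$, because $(-)^{UM}$ preserves limits — cut out by the homomorphism equations, so that $U[M,N]$ is the $\mc E$-hom-object $\Mod^{\ms T}(M,N)\hookrightarrow (UN)^{UM}$. The only point to check is that the pointwise operations of $N$ restrict to $[M,N]$, and this is exactly where commutativity of $\ms T$ is used: the commutativity isomorphisms say precisely that the operations of $N$ are themselves morphisms of models (jointly in all their inputs), hence carry tuples of homomorphisms $M\to N$ to homomorphisms. One then verifies that $[-,-]$ is an $\mc E$-bifunctor $(\Mod^{\ms T})^{op}\times\Mod^{\ms T}\to\Mod^{\ms T}$ and that it agrees with the Day internal hom formed in $[\ms T^{op},\ms E]$.

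The key lemma is that $\Mod^{\ms T}$ is an \emph{exponential ideal} for Day convolution: for every model $N$ and every presheaf $A$ on $\ms T$, the Day internal hom $[A,N]$ is a model. Since $[A,N](t)\cong[\ms T^{op},\ms E]\big(A,N(t\ot-)\big)$ and $[\ms T^{op},\ms E](A,-)$ preserves products, this reduces to showing that $t\mapsto N(t\ot t')$ preserves $\mc C$-ary products for each fixed $t'$, i.e.\ to the distributivity of $\ot$ over the $\mc C$-ary sums of $\ms T$; the latter follows from the construction of $\ot$ out of $\tau$ together with the universal property of $\mc C$-ary sums (concretely: $FX\ot FY\cong F(X\times Y)$, and a general model is a reflexive coequalizer of free ones). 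Granting the lemma, the reflection $L\colon[\ms T^{op},\ms E]\to\Mod^{\ms T}$ is a monoidal localization, so $\Mod^{\ms T}$ inherits a closed symmetric $\mc E$-monoidal structure with $M\ot_{\Mod}N:=L(M\ot_{\mathrm{Day}}N)$, unit $L\,y(I)\cong F1$, internal hom $[-,-]$ as above, and coherences inherited from Day convolution; equivalently, one may build $\ot_{\Mod}$ as the left $\mc E$-adjoint of $[N,-]$, which exists because each fiber $\Mod^{\ms T^P}$ is locally presentable — via the equivalence with Linton models and the theory of limit sketches — while $[N,-]$ is limit-preserving and accessible. This lemma, and verifying the distributivity in the $\mc E$-enriched fibered setting rather than merely for ordinary categories, is the main obstacle; the remaining coherence and naturality checks are bookkeeping.

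It remains to see that $F$ is strong $\mc E$-monoidal, which is a short computation using the $\mc E$-enriched adjunction $F\dashv U$, the identity $U[N,M]\cong\Mod^{\ms T}(N,M)$, cartesian closure of $\ms E$, and the $\mc E$-enriched Yoneda lemma: for any model $M$, $\mc E$-naturally,
\begin{align*}
\Mod^{\ms T}(FX\ot FY, M) &\cong \Mod^{\ms T}\big(FX,[FY,M]\big)\\
&\cong \ms E\big(X,U[FY,M]\big)\cong \ms E\big(X,\Mod^{\ms T}(FY,M)\big)\\
&\cong \ms E\big(X,\ms E(Y,UM)\big)\cong \ms E(X\times_P Y,UM)\cong \Mod^{\ms T}\big(F(X\times_P Y),M\big),
\end{align*}
so $FX\ot FY\cong F(X\times_P Y)$; and $F1_P\ot N\cong N$ (hence, by symmetry, $N\ot F1_P\cong N$) because $\Mod^{\ms T}(F1_P\ot N,M)\cong U[N,M]\cong\Mod^{\ms T}(N,M)$, identifying $F1_P$ with the unit. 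The structure maps of $F$ are the resulting natural isomorphisms, whose coherence with the associators and unitors is automatic from the universal properties. Finally, the internal hom, the reflection, the exponentials, and the (co)limits used above are all preserved by the reindexing functors $f^{\ast}$ (being inverse-image parts of morphisms of toposes), so the fiberwise closed symmetric $\mc E$-monoidal structures are compatible with reindexing and glue to a fibered such structure on $\Mod^{\ms T}$; here one argues exactly as in the proof that $\Mod^{\ms T}$ is a stack.
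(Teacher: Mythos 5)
Your overall route is the paper's: Day convolution on presheaves over $\ms T$, the key lemma that $\Mod^{\ms T}$ is an exponential ideal (so Day's reflection theorem transfers the closed symmetric structure to models), with the distributivity of $\ot$ over $\mc C$-ary sums supplying the exponential-ideal property; your enriched-Yoneda computation for $FX\ot FY\cong F(X\times_P Y)$ is a legitimate alternative to the paper's factorization of $F$ through the sheaf inclusion, the left Kan extension along the strict monoidal $\tau$, and the reflection. However, there is a genuine gap at the step you dispatch in your last paragraph: the compatibility of the fiberwise monoidal and closed structures with reindexing. The reindexing functors of $\Mod^{\ms T}$ are \emph{not} inverse-image parts of geometric morphisms between toposes; they are the restrictions $\Mod^{\ms T_\phi}$ (precomposition with the left extensions of $\ms T$) between categories of models, and there is no general principle saying such functors preserve Day convolution or the internal hom. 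This is precisely where the paper does real work: strong monoidality of each restriction is obtained by doctrinal adjunction applied to the adjunction $\Mod^{\ms T_\phi}\dashv\Mod^{\ms T^\phi}$ (the restriction along $\phi$ is a \emph{left} adjoint, with canonical oplax structures and monoidal unit/counit), and closedness of each restriction is a separate end/coend computation using the change-of-variables formula together with the projection formula (distributivity) for $\ms T$. Without this, you only have a family of monoidal fibers, not a monoidal $\mc C$-category, and the pointwise formulas $(M\ot_P N)(f)=M(f)\ot_I N(f)$ and $[M,N]_P(f)=[M(f),N(f)]_I$ used to extend to $\mc E$ are not pseudonatural in $f$, so the stack-gluing argument does not get off the ground.

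A secondary wobble: your justification of the distributivity/sketchiness of $T\ot_J-$ partly appeals to $FX\ot FY\cong F(X\times Y)$ and to presenting a model as a reflexive coequalizer of free ones, which is circular at that stage (the monoidal structure on $\Mod^{\ms T}$ and the strong monoidality of $F$ are what is being built). The non-circular argument, which your first clause gestures at, is the one the paper gives: since $\tau$ is identity-on-objects, strict monoidal and bicartesian, one proves $\tau^J f\ot_J T\cong \ms T_f\ms T^f T$ and reduces the projection formula for $\ms T$ to the standard projection formula for $\ms C=\Arr(\mc C)$; sketchiness of $T\ot_J-$ then follows because $\ms T_f$ and $\ms T^f$ are sketchy. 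Your concrete construction of $[M,N]$ as the homomorphism subobject of $(UN)^{UM}$ is fine in spirit (it is the classical commutative-theory argument, internalized), but you would still need to verify it agrees with the Day hom and, again, that it is stable under the restrictions $\Mod^{\ms T_\phi}$, which returns you to the missing step above.
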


It is well-known that there is a  monoidal structure on $[\mc T^{op}, \Set]$, namely the Day convolution. Moreover, Day's reflection theorem gives a condition under which the convolution on $[\mc T^{op},\Set]$ determines a closed monoidal structure on $\Mod^{\mc T}$. The import of our result is that this condition holds, and that the tensor product is compatible with the $\mc E$-category structure and the free-forget adjunction.

In the terminology of~\cite{Shulman2013}, the category of models for a commutative theory is an $\mc E$-cosmos. In particular, it is a suitable setting for formal enriched category theory.

\subsection{Examples}
Our notion of universal algebra necessarily depends on the site of definition $\mc C$. For example, the topos of sets is generated by the unit site, i.e.
\[\Set \cong \Psh(\mathbf 1).\]
A $\mathbf 1$-ary theory $\tau: \mathbf 1 \to \mc T$ is a monoid, and a model of $\mc T$ is a set with a monoid action. Clearly, not every finitary theory is a $\mathbf 1$-ary theory. To arrive at Lawvere's original definition, we present $\Set$ as
\[\Set \cong \Sh(\aln),\]
where the topology on $\aln$ is generated by finite sum diagrams. In the opposite direction, we can enlarge the site $\aln$ by allowing sets with larger cardinality to arrive at a notion of $\ld$-ary theory for some regular cardinal $\ld$.

In this paper, we are not at all interested in transferring the notion of finiteness to an arbitrary topos. We are mainly concerned with sites $\mc C$ whose objects are continua (with uncountably many points). Our motivating example is the theory of Lebesgue integration; this theory has an $\R$-ary operation
\[\int \,d\mu: V^\R \to V\]
for every finite Borel measure $\mu$ on $\R$.

\subsection{A convenient category of linear spaces}
In the conclusion of the paper, we flesh out the last example above by taking $\mc C = \Borel$ where $\Borel$ is the category of Borel maps whose objects are
\[\Ob(\Borel) = \{0,1,2,\dotsc, \N, \R\}.\]
This is a site with respect to the finite sum topology. A sheaf on $\Borel$ is a {\em nonlinear space}. If $X$ is a nonlinear space, we interpret the set $X^{I}$ as the collection of {\em bounded} Borel maps from $I$ to $X$. 

Next, we define a commutative $\Borel$-ary theory whose $I$-ary operations are finite measures on $I$. A model of the theory is a {\em linear space}. That is, if $V$ is a linear space, then for each $I$-ary operation $\mu: 1 \to FI$, there is an integration map
\[\int_{i\in I} \,d\mu(i): V^I \to V^1.\]
sending $f: I \to V$ to $\int f(i) \,d\mu(i)$. A morphism of models is a morphism of the underlying spaces which commutes with the Lebesgue integral, i.e. a bounded linear map.

The category of linear spaces contains, as a full subcategory, the category of separable Banach spaces and bounded linear maps. Moreover, it contains the interesting spaces used in applied functional analysis. In particular, it contains spaces of smooth functions, spaces of Schwartz functions, spaces of test functions, and corresponding dual spaces.

For each nonlinear space $P$, there is a category $\ms E^P$ of nonlinear bundles over $P$ and a category $\ms V^P$ of linear bundles over $P$. If $V = \bk{V_p}_{p\in P}$ and $W=\bk{W_p}_{p\in P}$ are objects of $\ms V^P$, then a bundle map $T: V\to W$ is a collection of linear maps $T_p: V_p \to W_p$, one for each $p$, and the assignment $p \mapsto T_p$ is measurable and bounded with respect to the parameter $p$.

If $\phi: P \to Q$ is a bounded measurable map, then there is a pullback functor $\phi^*: \ms V^Q \to \ms V^P$ with left and right adjoints $\phi_!$ and $\phi_*$. Intuitively, these are given by the formulas
\begin{align*}
(\phi^* V)_p &= V_{\phi(p)}\\
(\Si_{\phi} V)_q & = \sum_{p\in \phi^{-1}(q)} V_p\\
(\Pi_\phi V)_q & = \prod_{p \in \phi^{-1}(q)} V_q.
\end{align*}
Here, a section $\mu: P \to \sum_{p\in P} V_p$ is (morally) a finite measure on $P$ such that $d\mu(p)$ lies in $V_p$ for each $p$. A section $f: P \to \prod_{p\in P} V_p$ is a bounded map on $P$ such that $f(p) \in V_p$ for each $p$.

For every bundle $X = \bk{X_p}_{p\in P}$ of nonlinear spaces, there is a bundle of free linear spaces $F^P X$, given by
\[\bk{F^P X}_p = F X_p,\]
where $FX$ is the space of finite measures on the nonlinear space $X$. In particular, we have a well-defined notion of measure on any nonlinear space $X$.

Since the theory is commutative, we also have a (projective) tensor products of linear bundles, with
\[(V \ot_P W)_p = V_p \ot W_p.\]
If $v: I \to V$ and $w: J \to W$ are bounded families of vectors, and $\mu :1\to F(I\times J)$ is a finite measure, then 
\[\int_{I \times J} v(i) \ot w(j) \,d\mu(i,j)\]
is an element of $V \ot W$.

The tensor product is closed, which means that it determines an internal hom of linear bundles. The fibers
\[[V,W]_p = [V_p, W_p],\]
are spaces of bounded linear maps.

A commmon pattern in the harmonic analysis and in the study of partial differential equations is to introduce various parameters into a problem, and to establish multilinear inequalities which are uniform in these various parameters. For example, the entire field of semiclassical analysis can be caricatured as ``harmonic analysis with respect to a small parameter $h$''. We believe that our framework of linear bundles over a parameter space provides a good language to formulate and keep track of estimates which arise in this context.

\section{Related work}
\subsection{Varieties of algebra}
Birkhoff~\cite{Birkhoff1935} defined a variety of algebras to be a collection of finitary operations related by universally quantified equational axioms. He then characterized those categories whose arrows are homomorphisms of algebras for some variety $\tau$. Lawvere's thesis~\cite{Lawvere1963} showed that the data for a variety of algebras can be canonically represented as the category of finitely-generated free models. More importantly, he showed that a homomorphism of algebras is the same thing as a natural transformation between multiplicative presheaves on this category. This setup was generalized by Linton in~\cite{Linton1966} to encompass theories with infinitary operations. In~\cite{Linton1969}, Linton formulated a theory where the collection of arities is an arbitrary category instead of a subcategory of $\Set$.

The theory of sketches was developed by Ehresmann~\cite{Ehresmann1968}, Kennison~\cite{Kennison1968} and Gabriel-Ulmer~\cite{GabrielUlmer1971}. In this theory, the finite sums in Lawvere's semantics are replaced by an arbitrary colimit cones. A sketch can be interpreted as a multi-sorted theory which allows, moreover, some partially defined operations. 

\subsection{Enriched and internal theories}
The notion of a {\em finitary} algebraic theory can be generalized to categories other than $\Set$ in various different ways. Borceux and Day~\cite{BorceuxDay1980} define a notion of a finitary theory in a certain type of closed category, and the arities of this theory are finite multiples of the unit object. This uses a $\Set$-based notion of finiteness. 

There are also intrinsic notions of finiteness generalizing the $\Set$-based notion. Johnstone and Wraith~\cite{JohnstoneWraith1978} define a notion of finitary theory internal to an elementary topos, where the notion of finiteness is connected to the natural numbers object. Kelly in~\cite{Kelly1982a} defines a notion of a finite {\em limit} theory in the enriched setting, the limits here are finitely presentable in a certain sense. Similarly, Power~\cite{Power1999} defines a notion of a finite {\em product} theory, whose models are functors preserving {\em powers} indexed by finitely presentable objects.

A very general notion of enriched sketch is given in Kelly's monograph~\cite{Kelly1982b}. One can associate an enriched sketch to a category in a canonical way by choosing all limits with some predetermined indexing type. For example, this could include finitely-presentable powers, but also $\k$-presentable powers for some regular cardinal $\k$. In Lack and Rosick\'y's paper~\cite{LackRosicky2011} some examples are given of {\em sound limit doctrines} in enriched categories, where explicit constructions are available. The notion of Lawvere theory is given relative to such a doctrine. They show that in this setting many of the associated constructions can be made more explicit. However, the notion of a sound limit doctrine is rather inflexible, and it is shown in~\cite{AdamekBorceuxLackRosicky2002} that for $\ld > \aln$, the doctrine of $\ld$-ary products in $\Set$ is not sound in this sense. Lucyshyn-Wright~\cite{Lucyshyn-Wright2016} defines a general notion of enriched Lawvere theory which allows the category of arities to be more or less arbitrary and works in a (not necessarily cartesian) closed category; the main difference between his setup and ours is that ours is much less general, but also significantly more explicit.

Our approach eschews the use of enriched category theory in favor of the theory of fibered ordinary categories, which we have found to be more straightforward to work with. We also restrict our attention to Grothendieck toposes over $\Set$; this ensures that the models for our theories are reflective subcategories of ordinary presheaf categories. The main conceptual difference between our approach and other approaches to universal algebra is that instead of considering a single theory $\tau: \mc C \to \mc T$, we consider a whole family of theories $\tau_I: \ms C_I \to \ms T_I$. This ensures that the notion of model is stable under localization and gives us a well behaved fibered $\mc E$-category of models. Of course, this is not a new idea at all, and is closely modeled after~\cite[SGA 4.IV]{ArtinGrothendieckVerdier1972}. Most of our constructions are well-known and use existing techniques. However, it is difficult to track down all of the necessary results in the literature. As we would like to use the results of this paper in future work, we have found it necessary to write them down.

\subsection{Convenient vector spaces}
The main inspiration for this paper was the work of Fr\"olicher and Kriegl~\cite{FrolicherKriegl1988} on convenient spaces. They defined a convenient category of topological vector spaces with a closed monoidal structure. These spaces are associated with a cartesian closed category $\Ci$ of smooth spaces and a monoidal adjunction $F\dashv U: \Con \rla \Ci$. A smooth space is a set together with a collection of smooth curves satisfying some axioms. This is close in spirit to the idea of a sheaf, but the setness and the extra axioms prevent $\Ci$ from being locally cartesian closed. They also exhibited a monoidal adjunction $\ell^1 \dashv U: \Con \rla \linf$, where $\linf$ is a locally cartesian closed category of bounded spaces. These spaces are equipped with a collection of bounded sequences satisfying some axioms. The space $\ell^1\R$ consists of countably supported measures on $\R$ with bounded mass. Naively, one might expect to be able to replace $\ell$ with $L$ and obtain a notion of a space with bounded measurable curves. However $\Con$ includes the category of Banach spaces as a full subcategory, and Banach spaces already do not have a good notion of Lebesgue integration in general. Moreover, the whole setup is somewhat baroque and apparently miraculous due to the mixture of algebraic and topological definitions.

\subsection{The Giry-Lawvere monad}
Lawvere proposed using measure theory as a way to encode algebraic structures on topological spaces, and this was worked out in Giry's paper~\cite{Giry1982} in the case of Polish spaces. He defines a monad $M$ on the category $\Pol$ of Polish spaces and {\em continuous} maps, sending $X$ to the space $MX$ of probability measures on $X$, whose weak topology makes it a Polish space. Thus one can define a convex Polish space to be an algebra for this monad. However, this entails the rather severe restriction that the underlying space is Polish. It also seems more appropriate to consider a category of measurable spaces and {\em measurable} maps.

One might hope to define the Giry-Lawvere monad in the context of measurable spaces and measurable maps. This is not so convenient, however, because the category of measurable spaces and measurable maps is not cartesian closed~\cite{Aumann1961}. For this reason Heunen, Kammar, Staton and Yang~\cite{HeunenKammarStatonYang2017} defined a notion of a quasi-Borel space in terms of concrete sheaves. The category of quasi-Borel spaces admits a monad $M$ sending a quasi-Borel space to a space of probability measures. Their definition of a convex space is thus very similar in spirit to our definition of a linear space, but we have found it more convenient to work with arbitrary sheaves instead of concrete sheaves.

\section{Preliminaries}
\subsection{Size constraints}
We follow standard conventions for categories of sets and categories. An object of $\Set$ is a small set. A  category is in $\Cat$ if the set of arrows is small. A category is in $\CAT$ if it is locally small and the set of arrows is moderate. A category is in $\CAT'$ if the set of arrows is moderate.

\subsection{Presheaves and fibered categories}
We write $[\mc C,\mc D]$ for the category of functors from $\mc C$ to $\mc D$. We write
\[\hat{\mc C} = \Psh^{\mc C} = [\mc C^{op},\Set]\]
for the category of presheaves on $\mc C$. 

By a $\mc C$-category, we mean a functor $p: \mc F \to \mc C$. If $p$ is a {\em fibered} $\mc C$-category, we may identify it with a pseudofunctor $\ms F: \mc C^{op} \to \CAT$. We write $\ms F \in \FIB^{\mc C}$ if $\ms F$ is a fibered $\mc C$-category, and we write $\ms F \in \Fib^{\mc C}$ if each fiber $\ms F^I$ is small. 

\subsection{Restriction and extension}
If $\ms F$ is a fibered $\mc C$-category and $\phi: I \to J$, we denote restriction along $\phi$ by 
\[\ms F^\phi: \ms F^J \to \ms F^I.\]
Dually, if $\ms F$ is an opfibered $\mc C$-category (i.e., $p^{op}: \mc F^{op} \to \mc C^{op}$ is a fibered $\mc C^{op}$-category), we denote {\em left extension} along $\phi$ by
\[\ms F_\phi: \ms F_I \to \ms F_J.\]
If $\ms F$ if both fibered and opfibered over $\mc C$, we say it is a bifibered $\mc C$-category. In this case we have an adjunction $\ms F_\phi \dashv \ms F^\phi$. 

Note that, for an {\em object} $I \in \mc C$, we have $\ms F^I = \ms F_I$ by definition. If $\ms F$ is a fibered $\mc C$-category, its dual $\ms F^{op}$ is defined (as a pseudofunctor) by $(\ms F^{op})^I = (\ms F^I)^{op}$. If  $\ms F^{op}$ is bifibered, then there are {\em right extension} functors $\ms F_{\phi_*}$ such that $\ms F^\phi \dashv \ms F_{\phi_*}$.

We will use the standard notations
\[\phi_! = \ms F_\phi, \qq \phi^* = \ms F^\phi, \qq \phi_* = \ms F_{\phi*}\]
when the fibered $\mc C$-category $\ms F$ is clear from the context.

As a special case, we will write
\[\Psh_F \dashv \Psh^F \dashv \Psh_{F_*}.\]
Thus $\Psh^F$ is precomposition with $F$, and the adjoints $\Psh_F$ and $\Psh_{F_*}$ correspond to left and right Kan extension along $F$.

\subsection{$\mc C$-sums and $\mc C$-products}
Let $\mc C$ is a category with finite limits, and let $\ms F$ be a bifibered $\mc C$-category. Given a pullback diagram in $\mc C$ 
\[\begin{tikzcd}
\cd  \ar[swap]{d}{\ti X}\ar{r}{\ti Y}& \ar{d}{X}\\
 \ar[swap]{r}{Y} & .
\end{tikzcd}\]
Then one can define a canonical transformation
\[\ti Y_! \ti X^* \to X^* Y_!.\]
If this transformation is an isomorphism, we say that the Beck-Chevalley condition holds for the pullback diagram.  

We say that $\ms F$ has $\mc C$-ary sums if it is bifibered and the Beck-Chevalley condition holds for every pullback in $\mc C$. Similarly, we say that $\ms F$ has $\mc C$-ary products if $\ms F^{op}$ has $\mc C$-ary sums.

\subsection{Change of base}
It is straightforward to define restriction 2-functors
\[F^*: \Fib^{\mc D} \to \Fib^{\mc C}\]
which make $\Fib$ into a fibered $\Cat$-category. The restriction $F^*$ has left and right 2-adjoints $F_!$ and $F_*$ (constructed in~\cite[I.2.4]{Giraud1971}, where the left 2-adjoint $F_!$, for example, is characterized by {\em equivalences}
\[\Fib^{\mc D}(F_! \ms F, \ms G) \cong \Fib^{\mc C}(\ms F, F^* \ms G).\]
Note that these are not isomorphisms, and $\Fib$ does not have left and right extensions in the sense we have earlier described. 

\section{Lawvere theories}
\subsection{Additive and multiplicative $\mc C$-functors}
Let $\mc C$ be a fixed category with finite limits. Fix $\ms G$ and $\ms H$ with $\mc C$-ary sums. A fibered $\mc C$-functor $F: \ms G \to \ms H$ is $\mc C$-additive if, for any $\phi \in \mc C$, the canonical transformation $\phi_! F \to F \phi_!$ is an isomorphism. 

Dually, if $\ms F$ and $\ms G$ have $\mc C$-ary products, we say $F$ is $\mc C$-multiplicative if, for every $\phi\in \mc C$ the canonical transformation $F\phi_* \to \phi_* F$ is an isomorphism.

We write $[\ms G,\ms H]$, $[\ms G,\ms H]_+$ and $[\ms G,\ms H]_\times$ for the fibered $\mc C$-categories of $\mc C$-fibered, $\mc C$-additive, and $\mc C$-multiplicative functors, respectively. Thus the fibers of $[\ms G,\ms H]$ are given by
\[[\ms G,\ms H]^I := \FIB^{\mc C}(I \times \ms G,\ms H). \]
The subcategories $[\ms G,\ms H]_+$ and $[\ms G,\ms H]_\tm$ are fibered as well, due to the Beck-Chevalley condition (for example, given $\phi \in \mc C$, the fibered restriction $[\phi, \ms H]$ is $\mc C$-additive).

\subsection{$\mc C$-ary operations}
Let $\mc C$ be a small category with finite limits, and let $\ms A$ be any category with $\mc C$-ary products. A $\phi$-ary operation in the category $\ms A$ is an arrow
\[f: \phi_* A \to B.\]
When $\mc C=\Set$ and $\ms A^I = \mc A^I$ for some ordinary category $\mc A$, the object $A$ is an $I$-indexed family $\bk{A_i}_{i\in I}\subset \mc A$, and the object $B$ is a $J$-indexed family $\bk{B_j}_{j\in J}\subset \mc A$. The arrow $f$ consists of, for each $j \in J$, an arrow
\[f_j: \prod_{\phi(i)=j} A_i \to B_j.\]
Thus, for each $j$, the arrow $f_j$ is a $\phi^{-1}(j)$-ary operation in the category $\mc A$.

\subsection{$\mc C$-ary theories}
Following Lawvere, we may view any category $\ms T$ with $\mc C$-sums as a multi-sorted algebraic theory. If $\ms E$ is a category with $\mc C$-products, we define a Lawvere model of $\ms T$ in $\ms E$ to be an object of 
\[\Mod^{\ms T}_\Lawvere(\ms E)=[\ms T^{op}, \ms E]_\tm.\]
An object $T\in \ms T^I$ represents an abstract $I$-ary family of sorts. An arrow of type $\al: \phi_* S \to T$ in $(\ms T^I)^{op}$ represents an abstract $\phi$-ary operation. The axioms of the theory are all encoded in the composition map of $\ms T$.

If $M$ is a Lawvere model of the theory $\ms T$ in some category $\ms E$, then we may interpret each abstract $I$-ary family of sorts $T \in \ms T^I$ as the concrete $I$-ary family 
$M^IT \in \ms E^I$
of objects in $\ms E$. Similarly, we may interpret each abstract $\phi$-ary operation $\al: \phi_* S \to T$ in $(\ms T^I)^{op}$ as the concrete $\phi$-ary operation
\[\phi_* M^JS \cong M^I\phi_* S \rra{M^I\al} M^IT\]
in the category $\ms E$. Note that the interpretation of $\al$ as a $\phi$-ary operation is only possible because the functor $M$ is assumed to be $\mc C$-multiplicative. 

The functoriality of $M$ encodes the satisfaction relation. That is, the concrete $\phi$-ary operations determined by $M$ must satisfy the axioms of the theory $\ms T$.

\subsection{Single-sorted theories}
Given a fixed category $\mc C$ with finite limits, we define the fibered $\mc C$-category $\ms C$ to be the codomain fibration $\cod: \mc C^{\da} \to \mc C$, so that the fiber $\ms C^I$ is the slice category of arrows over $I$, i.e.
\[\ms C^I \cong \mc C_{/I}.\]
The $\mc C$-category $\ms C$ is the free $\mc C$-sum completion of the unit $\mc C$-category. This means that if $\ms D$ has $\mc C$-products, then there is an equivalence of $\mc C$-categories
\[\Mod^{\ms C}_{\Lawvere}(\ms D) = [\ms C^{op}, \ms D]_\tm \cong [1, \ms D] \cong \ms D.\]
Thus a model of $\ms C$ in $\ms D$ is essentially the same as an object of $\ms D$. In this sense, the $\mc C$-category $\ms C$ is the $\mc C$-ary theory of {\em objects}. A single-sorted theory has the same sorts as $\ms C$, but may have additional operations as well.

\begin{definition}
Let $\mc C$ be a small category with finite limits. A $\mc C$-ary Lawvere theory is a $\mc C$-additive identity-on-objects functor
\[\tau: \ms C \to \ms T,\]
where $\ms T$ is a small category with $\mc C$-sums.
\end{definition}
If $\tau$ is a $\mc C$-ary Lawvere theory, then precomposition with $\tau$ is a forgetful fibered $\mc C$-functor
\[U_{\Lawvere}^\tau(\ms D): \Mod^{\ms T}_{\Lawvere}(\ms D) \to \Mod^{\ms C}_{\Lawvere}(\ms D) \cong \ms D.\]
Thus a model of $\ms T$ in $\ms D$ is an object of $\ms D$ with some extra structure and properties. 
\section{Linton theories}
\subsection{Standard sites}
We recall that a {\em standard site} is a small finite limit category $\mc C$ equipped with a subcanonical topology.  This notion is stable under localization: for each object $I \in \mc C$, the slice category $\mc C_{/I}$ has finite limits and an induced subcanonical topology. The topology on $\mc C_{/I}$ so that there is an equivalence of fibered $\mc C$-categories
\[\Sh(\mc C_{/I}) \cong \Sh(\mc C)_{/yI}\]
betweeen sheaves on the site $\mc C_{/I}$ and sheaves on $\mc C$ with maps to $yI$. Here 
\[y:\mc C \to \Sh(\mc C)\]
is the Yoneda embedding.

\subsection{Linton models}
In Linton's semantics, an algebraic $\mc C$-ary theory is an identity-on-objects functor $\tau: \mc C \to \mc T$. We refer to such a functor as a {\em Linton theory}. Given a replete subcategory $\mc E \subset \Psh^{\mc C}$, a {\em Linton model} of $\mc C$ in $\mc E$ is presheaf $M \in \Psh^{\mc T}$ such that the restriction $\tau^* M$ lies in $\mc E$. Note that the category of Linton models is defined in an essentially set-theoretic way.

In this paper we specialize to the case where $\mc E$ is the sheaf topos $\Sh(\mc C)$. Thus a Linton model of $\tau$ is a presheaf on $\mc T$ whose restriction to $\mc C$ is a sheaf. We say that $\tau$ is {\em subcanonical} if, for every $T \in \mc T$, the representable presheaf $y T$ is a Linton model.

\begin{definition}
Let $\mc C$ be a standard site, and let $\tau: \ms C \to \ms T$ be a $\mc C$-ary Lawvere theory. We say that $\tau$ is {\em standard} if, for every $I \in \mc C$ and $T \in \ms T^I$, the Linton theory 
\[\tau^I: \ms C^I \to \ms T^I\]
is subcanonical (with respect to the induced topology on $\ms C^I$).
\end{definition}

Let $\tau:\ms C\to\ms T$ be a standard $\mc C$-ary Lawvere theory.  By the definition, the $\mc C$-category $\ms T$ is a bifibration, and we may define a $\mc C$-fibered category $\Psh^{\ms T}$ by the formula
\[(\Psh^{\ms T})^I = \Psh^{\ms T_I}.\]
That is, the restriction along $\phi$ is defined to be precomposition with the left extension $\ms T_\phi$. 

We may now define the full subfibration 
\[\Mod^{\ms T}_{\Linton} \subset \Psh^{\ms T}.\]
 First, we describe the fibers: a presheaf on $\ms T^I$ is in $(\Mod^{\ms T}_{\Linton})^I$ if it is a Linton model of $\tau^I$; i.e. if its precomposition with $\tau^I$ is a sheaf on $\ms C^I$. Next, we claim that $\Mod^{\ms T}_{\Linton}$ is closed under restriction. That is, if $M\tau^J$ is a sheaf, then $M\ms T_\phi \tau^I$ is a sheaf as well. In fact, by $\mc C$-additivity of $\tau$, we have
\[M\ms T_\phi \tau^I \cong M \tau^J \ms C_\phi,\]
and precomposition with $\ms C_\phi$ preserves sheaves. 

Note that, by definition, a Linton model of the degenerate theory $\id: \ms C\to \ms C$ is the same thing as a sheaf. That is, we have an equivalence of fibered $\mc C$-categories
\[\Mod^{\ms C_I}_{\Linton} \cong \Sh(\mc C_{/I}) \cong \ms E^{I},\]
where $\ms E$ is the canonical fibering of $\Sh(\mc C)$ over $\mc C$. Thus $\ms C$ can be viewed as the theory of objects (in $\ms E$). In particular, we have a forgetful fibered $\mc C$-functor 
\[U^\tau_{\Linton}: \Mod^{\ms T}_{\Linton} \to \ms E.\] 
We claim that, for a standard $\mc C$-ary theory, the functors $U^\tau_{\Linton}$ and $U^\tau_{\Lawvere}(\ms E)$ are essentially the same. In order to compare them, we define a cartesian $\mc C$-functor
\[\al: \Mod^{\ms T}_\Lawvere(\ms E) \to \Mod^{\ms T}_{\Linton}\]
and show that it is a $\mc C$-fibered equivalence. 
\begin{theorem}
For $J \in \mc C$, let $\lp_J: J \to J\times J$ be the diagonal map. If $\tau$ is a standard $\mc C$-ary theory, then the functors
\[\al_J :\Mod^{\ms T}_\Lawvere(\ms E^J) \to (\Mod^{\ms T}_{\Linton})^J\]
defined by
\[\al_J(M)= \ms E^{J \times J}(\lp_J, M^J-)\]
are the components of a cartesian $\mc C$-functor $\al$ fitting into a diagram
\[
\begin{tikzcd}
\Mod^{\ms T}_\Lawvere(\ms E)\car{dr}{\cong}\ar{d} \ar{r}{\al}&\ar{d}(\Mod^{\ms T}_{\Linton})\\
\ms E\ar{r} & \ms E
\end{tikzcd}\]
Moreover, the cartesian functor $\al$ is an equivalence in $\FIB^{\mc C}$.
\end{theorem}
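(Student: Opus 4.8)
The plan is to show first that the prescription $\al_J(M)=\ms E^{J\tm J}(\lp_J,M^J-)$ assembles into a well-defined cartesian $\mc C$-functor $\al$ over $\ms E$, and then --- the substantive part --- that each component $\al_J$ is an equivalence of categories, by producing an explicit quasi-inverse. Since a cartesian functor of fibered $\mc C$-categories is an equivalence in $\FIB^{\mc C}$ exactly when it is a fiberwise equivalence, this will suffice. For well-definedness: fixing $J$ and $M\in\Mod^{\ms T}_\Lawvere(\ms E^J)$, unwind the formula to see that $\al_J(M)(T)$ is the set of maps $\lp_J\to M^J(T)$ in $\ms E^{J\tm J}$ (with $\lp_J\in\ms C^{J\tm J}$ regarded as an object of $\ms E^{J\tm J}$ via the canonical functor), morally the set of global sections of $M^J(T)$ over the terminal object --- this is the role of the diagonal. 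This is plainly functorial in $T$ and in $M$, so $\al_J(M)$ is a presheaf on $\ms T^J$. To see it is a Linton model I would restrict along $\tau^J$: a Lawvere model of the theory of objects $\ms C$ is just an object of $\ms E^J$, so $M\tau^J$ is canonically the image of the underlying sheaf $A:=U^\tau_\Lawvere(\ms E^J)(M)$, and a short Yoneda computation identifies $\al_J(M)\circ\tau^J$ with $A$ viewed as a sheaf on $\ms C^J$; in particular it is a sheaf, and this identification is precisely the commuting square $U^\tau_\Linton\circ\al\cong U^\tau_\Lawvere(\ms E)$ of the statement. Naturality of $\al_J$ in $J$ --- hence cartesianness of $\al$ --- then follows from three facts: $M$ is cartesian so commutes with $\phi^*$; restriction in $\Mod^{\ms T}_\Linton$ along $\phi$ is precomposition with $\ms C_\phi$ (as in the text, using $\mc C$-additivity of $\tau$); and naturality of the hom-formula.

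\emph{Full faithfulness.} Because $\tau$ is identity-on-objects and $\mc C$-additive, every object of $\ms T^I$ has the form $\phi_!\id_K$ for some $\phi\colon K\to I$; combined with fiberedness and $\mc C$-multiplicativity this shows that a Lawvere model is determined by its underlying sheaf together with the interpreted structure maps of the operations of $\ms T$, and that a morphism of Lawvere models is exactly a map of underlying sheaves commuting with all the operations. Dually, a morphism $\al(M)\to\al(M')$ of presheaves on $\ms T^J$ restricts along $\tau^J$ to a map of underlying sheaves, and naturality over the remaining arrows of $\ms T^J$ is again commutation with the operations; matching the two descriptions --- using $\mc C$-additivity once more to translate between the $\phi_*$-description of an operation in the fibers and its description on sections --- gives full faithfulness of $\al$. (Since standard sites are stable under localization, one could at this point pass to $\mc C_{/J}$ and argue only in the fiber over a terminal object; but the argument is uniform in $J$ in any case.)

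\emph{Essential surjectivity, and the main obstacle.} Given a Linton model $N$ over $J$ --- a presheaf on $\ms T^J$ with $A:=N\tau^J$ a sheaf --- I would build a quasi-inverse $\b$ by setting, for $\phi\colon K\to J$ and $T\in\ms T^K$,
\[\b(N)^K(T)\colon(\psi\colon L\to K)\longmapsto N\bigl((\phi\psi)_!\,\psi^*T\bigr),\]
a presheaf on $\ms C^K$, and then checking: (a) each $\b(N)^K(T)$ is in fact a sheaf on $\ms C^K$; (b) $T\mapsto\b(N)^K(T)$ is $\mc C$-multiplicative; (c) the $\b(N)^K$ cohere into a fibered $\mc C$-functor, where $\mc C$-additivity of $\tau$ enters exactly as in the proof that $\Mod^{\ms T}_\Linton$ is closed under restriction; and (d) $\al\b\cong\id$ and $\b\al\cong\id$, with $\b$ functorial. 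Steps (b)--(d) are Beck--Chevalley bookkeeping together with the Yoneda identification from the first paragraph. The hard part is (a): writing $N$ as a colimit of representables $yS\to N$ in $\Psh^{\ms T^J}$ and using the adjunctions $(\phi\psi)_!\dashv(\phi\psi)^*$ together with Beck--Chevalley in $\ms T$, one rewrites $\b(N)^K(T)(\psi)$ as a colimit --- computed in the reflective subcategory of Linton models --- of values of representable presheaves on fibers $\ms T^{K'}$, each of which restricts to a sheaf on $\ms C^{K'}$ precisely by the standardness hypothesis on $\tau$. This is the fibered, sheaf-theoretic incarnation of the classical fact that, for an additive $\aln$-ary theory, a finite-product-preserving presheaf on $\aln^{op}$ extends uniquely to a product-preserving presheaf on $\mc T^{op}$; making that colimit/sheafification argument precise in the present generality --- i.e.\ verifying (a) --- is where essentially all the work lies.
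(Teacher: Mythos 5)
Your overall architecture matches the paper's: the same formula for $\al_J$, the same identification of $\al_J(M)\circ\tau^J$ with the underlying sheaf, and a quasi-inverse given by essentially the same formula (your $\b(N)^K(T)(\psi)=N((\phi\psi)_!\psi^*T)$ is, up to indexing over $\mc C_{/J}$ rather than over $\mc C$ with values in $\ms E^{I\times J}$, exactly the paper's $M^I(T)(E)=\bar M(\ms T_{\pi_J\cp E}\ms T^{\pi_I\cp E}T)$). The gap is in your step (a), which you rightly call the crux but whose proposed proof does not go through. Writing $N$ as the canonical colimit of representables and precomposing with $\psi\mapsto(\phi\psi)_!\psi^*T$ only expresses $\b(N)^K(T)$ as a \emph{pointwise} (presheaf-level) colimit of the presheaves $\psi\mapsto\ms T^J\bigl((\phi\psi)_!\psi^*T,\,S\bigr)$; even granting, via standardness, that each of these is a sheaf, a pointwise colimit of sheaves need not be a sheaf. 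Your proposed fix --- computing the colimit ``in the reflective subcategory of Linton models'' --- is circular: the identity $N\cong\colim yS$ in that subcategory does not transport along precomposition unless you already know that precomposition carries Linton models to sheaves (equivalently, that it interacts correctly with the reflector), which is precisely the claim (a) you are trying to prove.

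The repair is a short direct argument, and you already have the ingredients in your full-faithfulness paragraph: since $\tau$ is identity on objects, every $T\in\ms T^K$ is $\tau^K F$ for some $F\in\ms C^K$, and since $\tau$ is cartesian and $\mc C$-additive, $(\phi\psi)_!\psi^*\tau^K F\cong\tau^J\bigl((\phi\psi)_!\psi^*F\bigr)$; a Beck--Chevalley rewrite then exhibits $\psi\mapsto(\phi\psi)_!\psi^*F$ as pullback along $F$ followed by left extension, i.e.\ as a cover-preserving (continuous) functor between slice sites, so that $\b(N)^K(T)\cong(N\tau^J)\circ(\text{that functor})$ is a sheaf simply because $N\tau^J$ is. No decomposition into representables and no colimit/sheafification argument is needed; this is exactly how the paper handles (a) in two lines. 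The remaining deviations are harmless: the paper checks $\b_J\al_J\cong\id$ and $\al_J\b_J\cong\id$ directly instead of splitting into full faithfulness plus essential surjectivity, and its identification $U_{\Linton}\al_J\cong U_{\Lawvere}$ runs through a chain of isomorphisms using the $\mc C$-multiplicativity of $M$, which your ``short Yoneda computation'' should invoke explicitly as well.
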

\begin{proof}
For $M: \ms T^{op} \to \ms E^J$ cartesian and $\mc C$-multiplicative, the components of $M$ are functors 
\[M^I: \ms T_I^{op} \to \ms E^{I\times J}.\]
By definition, we have
\[U^J_{\Lawvere} M = M^1 \tau^1 1 \in \ms E^J.\]
Writing $E \in \mc C_{/J}$ as $E \cong \ms C_{E} \ms C^E 1_J$, we compute
\begin{align*}
(U^J_{\Linton} \al_J M) E& = \ms E^{J \times J}(\lp_J, M^J \tau^J E) \\
& \cong \ms E^{J \times J}(\lp_J, M^J \tau^J \ms C_{E} \ms C^E 1_J) \\
& \cong \ms E^{J}(1_J,\ms E^{\lp_J} \ms E_{E\times J*} \ms E^{E\times J} M^J \tau^J 1_J)\\
&\cong \ms E^J(1_J, \ms E_{E*} \ms E^{(\id_J, E)}\ms E^{E \times J} M^J \tau^J 1_J)\\
&\cong \ms E^J(1_J, \ms E_{E*} \ms E^{E}\ms E^{\lp_J} M^J \tau^J 1_J)\\
&\cong \ms E^J(E, \ms E^{\lp_J} M^J \tau^J 1_J)\\
&\cong \ms E^J(E, \ms E^{\lp_J} \ms E^{\pi_J} M^1 \tau^1 1) \\
&\cong \ms E^J(E, M^1 \tau^1 1) \\
&=U^J_{\Lawvere} M E.
\end{align*}
In particular, we see that $\al_J M$ is, in fact, a Linton model (because $U^J_{\Lawvere} M \in \ms E^J$).

Now we need to show that $\al_J$ is pseudonatural in $J$. For $\phi: K \to J$ we claim there is a canonical isomorphism
\[\begin{tikzcd}
\car{dr}{\cong}\re [\ms T^{op}, \ms E^J] _\tm\ar{r}{\al_J} \ar{d}[']{\ms E^\phi \cp}&\Psh^{\ms T_J}\ar{d}{\Psh^{\ms T_{\phi}}}\\
 \re[\ms T^{op},\ms E^K]_\tm \ar{r}[']{\al_K} &\Psh^{\ms T_K}.
\end{tikzcd}
\]
Indeed, for $M: \ms T^{op} \to \ms E^J$ and $T \in \ms T_K$ we have
\begin{align*}
(\Mod^{\ms T_{\phi}} \al_J M)(T)& = (\al_J M)(\ms T_{\phi} T)\\
&=\ms E^{J\times J}(\lp_J, M^J \ms T_{\phi} T)\\
&\cong \ms E^{J \times J}(\lp_J, \ms E_{\phi \times J*} M^K T)\\
&\cong \ms E^{K \times J}(\ms C^{\phi \times J} \lp_J, M^KT)
\end{align*}
On the other hand, we have
\begin{align*}
(\al_K \ms E^\phi M)(T) & = \ms E^{K \times K}(\lp_K, (\ms E^\phi M)^K T)\\
&= \ms E^{K \times K}(\lp_K, \ms E^{\phi \times K} M^K T)\\
&\cong \ms E^{J \times K}(\ms C_{\phi\times K} \lp_K, M^KT).
\end{align*}
But we have a canonical isomorphism $\ms C^{\phi \times J} \lp_J \cong \ms C_{\phi\times K} \lp_K$, in light of the commutative diagram
\[\begin{tikzcd}
K\opcart{dr}\ar{r} \ar{d}{\lp_K}& K \ar{d}\cart{dr}\ar{r}{\phi} &J\ar{d}{\lp_J}\\
K\times K \ar{r}[']{\phi \times K} &J \times K \ar{r}[']{J \times \phi} &J\times J,
\end{tikzcd}\]
which combined with the preceding computations induces a canonical isomorphism $\Psh^{\ms T_\phi} \al_J\cong\al_K \ms E^\phi $, as claimed.

Now we construct a quasi-inverse $\b_J$ to $\al_J$. Suppose we have a model $\bar M: \ms T_J^{op} \to \Set$. We want to find a $\mc C$-multiplicative $M: \ms T^{op} \to \ms E^J$ such that $\al_J M \cong \bar M$. For every $E: X \to I \times J$ in $\mc C_{/I\times J}$, we have a commutative diagram
\[\begin{tikzcd}
X\cart{dr}\ar{d} \ar{r}{E}&\ar{d}{I \times \lp_J} I \times J\ar{r}\opcart{dr} & I\times J\ar{d} \\
X\times J \ar{r}[']{E \times J} & I \times J \times J\ar{r}[']{\pi_{I \times J}} & I\times J
\end{tikzcd}\]
and a commutative diagram
\[\begin{tikzcd}
I\times J \cart{dr}\ar{r}\ar{d}[']{I \times \lp_J}& J\ar{d}{\lp_J} \\
I \times J \times J\ar{r}{\pi_J\times J}&J\times J
\end{tikzcd}\]

If $M$ is $\mc C$-multiplicative and $\al_J M = \bar M$ then we find that for $E \in \mc C_{/I\times J}$ and $T \in \ms T_I$ there is a natural isomorphism
\begin{align}\notag
\ms E^{I\times J}(E, M^I(T)) &\cong \ms E^{I\times J}(\ms C_{(\pi_I \cp E) \times J} \ms C^{E \times J} \ms C^{\pi_J \times J} \lp_J, M^I(T))\\
&\cong \ms E^{J \times J}(\lp_J, M^J(\ms T_{\pi_J}\ms T_{E} \ms T^{\pi_I \cp E} T))\notag\\
\label{mbyconstruction}&\cong \bar M(\ms T_{\pi_J} \ms T_E \ms T^{\pi_I \cp E}(T)).
\end{align}
Thus for a model $\bar M: \ms T_J^{op} \to \Set$ we define a $\mc C$-functor $M: \ms T^{op} \to \ms E^J$ with components
$M^I: \ms T_I^{op} \to \ms E^{I \times J} \cong \Sh^{\mc C_{/I\times J}}$ given by
\[M^I(T)(E) = \bar M(\ms T_{\pi_J\cp E} \ms T^{\pi_I \cp E}(T)).\]
To see that $M^I(T)$ is in fact a sheaf on $\mc C_{/I\times J}$, we recall that every $T$ in $\ms T_I$ is of the form $T = \tau^I F$ for some $F \in \mc C_{/I}$. But now consider the pasting of pullback squares
\[\begin{tikzcd}[column sep=large]
\cart{rd}.\ar{d}[']{\ms C^{\pi_I \cp E}F}\ar{r}{\ms C^{F \times J}  E}&.\cart{rd}\ar{d}{F \times J} \ar{r}& . \ar{d}{F}\\
.\ar{r}[']{E}&I\times J\ar{r}[']{\pi_I} & I.
\end{tikzcd}\]
We have a canonical isomorphism
\begin{align*}
\ms T_{\pi_J} \ms T_E \ms T^{\pi_I \cp E}(\tau^I F) & \cong \tau^J \ms C_{\pi_J} \ms C_E \ms C^{\pi_I \cp E}(F)\\
&\cong \tau^J\ms C_{\pi_J} \ms C_{F \times J}\ms C^{F \times J} E,
\end{align*}
and thus
\[M^I(\tau^I F)(E) \cong \bar M(\tau^J\ms C_{\pi_J} \ms C_{F \times J}\ms C^{F \times J} E).\]
Since $\bar M(\tau^J - )$ is a sheaf on $\mc C_{/J}$ by assumption, and all of the restrictions and left extensions of $\ms C$ are continuous morphisms of sites, it follows that the $\bar M$ we have defined is a sheaf on $\mc C_{I\times J}$.

Now we need to show that the $M^I$ we have defined are components of a $\mc C$-multiplicative $\mc C$-functor $M: \ms T^{op} \to \ms E^I$. Recall that for $\phi: I \to K$, the restriction and right extension along $\phi$ are defined for $P \in \ms E^{I \times J}$, $Q \in \ms E^{K \times J}$ by
\begin{align*}
(\ms E^{\phi \times J}P)(-) & = P(\ms C_{\phi\times J} -) \\
(\ms E_{\phi \times J*}Q)(-) & = Q(\ms C^{\phi\times J} -).
\end{align*}
Then we have
\begin{align*}
(\ms E^{\phi \times J} M^K(T))(E) & = \bar M(\ms T_{\pi_J \cp \phi \times J\cp E} \ms T^{\pi_K \cp \phi \times J \cp E}T)\\
&=\bar M(\ms T_{\pi_J \cp E} \ms T^{\phi \cp \pi_I \cp E}T),
\end{align*}
while
\begin{align*}
M^I(\ms T^{\phi} T) (E) & = \bar M(\ms T_{\pi_J \cp E} \ms T^{\pi_I \cp E} \ms T^\phi T)
\end{align*}
and thus the isomorphism $\ms T^{\phi \cp \pi_I \cp E} \cong \ms T^{\pi_I \cp E} \ms T^{\phi}$ induces an isomorphism 
\[M^I \ms T^\phi \cong \ms E^{\phi \times J} M^K.\]
Similarly, let $E \in \ms E^{K \times J}$ be given. The pullback diagram
\[\begin{tikzcd}
\ar{d}[']{(\phi \times J)^* E}.\ar{r}{E^*(\phi \times J)}&.\ar{d}{E}\\
\ar{d}[']{\pi_I}\ar{r}{\phi \times J}I \times J & K \times J\ar{d}{\pi_K}\\
\ar{r}{\phi}I & K
\end{tikzcd}\]
determines a Beck-Chevalley isomorphism 
\[\ms T^{\pi_K \cp E} \ms T^\phi \cong \ms T_{E^* (\phi\times J)} \ms T^{\pi_I \cp(\phi \times J)^* E},\]
and thus we have
\begin{align*}
M^K(\ms T_\phi T)(E) & = \bar M(\ms T_{\pi_J \cp E} \ms T^{\pi_K \cp E} \ms T_{\phi} T)\\
&\cong \bar M(\ms T_{\pi_J \cp E} \ms T_{E^*(\phi\times J)} \ms T^{\pi_I \cp (\phi \times J)^* E} T)\\
&\cong \bar M(\ms T_{\pi_J \cp (\phi \times J) \cp (\phi\times J)^* E} \ms T^{\pi_I \cp (\phi\times J)^* E}),\\
&\cong \bar M(\ms T_{\pi_J \cp (\phi \times J)^* E} \ms T^{\pi_I \cp (\phi \times J)^* E}T)\\
&= (\ms E_{\phi \times J*} M)^I(T).
\end{align*}
Thus $M$ is both $\mc C$-cartesian and $\mc C$-multiplicative. 

Let 
\[\b_J: \Mod_{\Linton}^{\ms T,J} \to \Mod_{\Lawvere}^{\ms T}(\ms E^J)\]
be the functor sending $\bar M$ to $M$. We claim that $\b_J$ is a quasi-inverse for $\al_J$. Indeed the isomorphism $\b_J \al_J \cong \id$ is the observation in~\eqref{mbyconstruction}, while the isomorphism $\al_J \b_J \cong \id$ is given by the formula
\begin{align*}
\al_J \b_J(\bar M) &\cong \bar M(\ms T_{\pi_J \cp \lp_J} \ms T^{\pi_J \cp \lp_J} T) \\
&\cong \bar M(T).
\end{align*}
Since each component of $\al_J$ is an equivalence, we conclude that $\al$ is an equivalence as well.
\qed\end{proof}

\section{Construction of $\mc E$-products}
\subsection{Extension of $\Mod^{\ms T}$ to $\mc E$}
If $\ms A$ is a fibered $\mc C$-category with $\mc C$-products, we can extend the $\mc C$-category $[\ms A,\ms E]_\tm$ of $\mc C$-multiplicative functors into a fibered $\mc E$-category corresponding to the pseudofunctor
\[[\ms A, \ms E]_\tm^P: =[\ms A, \ms E^P]_\tm\]
Since $\mc E$ is a topos, every map $\phi:P \to Q$ gives rise to a 
string of {\em fibered} adjunctions $\ms E_{\phi} \dashv \ms E^\phi \dashv \ms E_{\phi*}$. In particular, the functors $\ms E^\phi$ and $\ms E_{\phi*}$ preserve $\mc E$-products, so that the fibered adjunctions $\ms E^{\phi} \dashv \ms E_{\phi*}$ induce corresponding adjunctions $[\ms A, \ms E^\phi]_\tm\dashv [\ms A, \ms E_{\phi_*}]_{\tm}$ in $\CAT$. This implies that the $\mc E$-category $[\ms A, \ms E]_\tm$ is well-defined and inherits the property of having fibered $\mc E$-products from $\ms E$.

More generally, if $\ms B$ is any fibered $\mc C$-category, we have a natural equivalence 
\[[\ms B, [\ms A, \ms E]_\tm] \cong [\ms A,[\ms B,\ms E]]_\tm.\]
In particular, if $R$ is a covering sieve for $P \in \mc E$, we have
\begin{align*}
[R, [\ms A,\ms E]_\tm] & \cong [\ms A,[R, \ms E]]_\tm \\
&\cong [\ms A, [P,\ms E]]_\tm\\
&\cong [P,[\ms A,\ms E]_\tm],
\end{align*}
because $\ms E$ is a stack for the canonical topology of $\mc E$. Thus the fibered $\mc E$-category $[\ms A,\ms E]_\tm$ is also a stack over $\mc E$.

We can summarize the above discussion as follows:
\begin{theorem}
Let $\ms A$ be a category with $\mc C$-products. The $\mc C$-category $[\ms A, \ms E]_\tm$ is a stack over $\mc C$, and the extension to a stack over $\mc E$ has $\mc E$-products.
\end{theorem}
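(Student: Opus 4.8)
The statement has two halves, each essentially an assembly of the remarks preceding it. The plan is (i) to verify descent for $[\ms A,\ms E]_\tm$ over $(\mc C,J)$, so that it is a stack over $\mc C$; and then (ii) to invoke the comparison lemma for the extension to the canonical topology of $\mc E$ and to transport the bifibration structure of $\ms E$ through the construction $[\ms A,-]_\tm$ to obtain $\mc E$-products.

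\emph{Descent over $\mc C$.} First I would isolate the ``Fubini'' equivalence
\[[\ms B,[\ms A,\ms E]_\tm]\cong[\ms A,[\ms B,\ms E]]_\tm\]
for an arbitrary fibered $\mc C$-category $\ms B$: both sides describe the fibered $\mc C$-functors out of $\ms A\times\ms B$ that are $\mc C$-multiplicative in the $\ms A$-variable, so the equivalence is a currying, the only point to check being that the single-variable condition of $\mc C$-multiplicativity is preserved under the transposes — which it is, since $\ms E$ has $\mc C$-products. Then, for a $J$-covering sieve $R\into\ms C^I$, apply this equivalence at $\ms B=\ms C^I$ and at $\ms B=R$, and insert between them the equivalence $[\ms C^I,\ms E]\cong[R,\ms E]$ expressing that $\ms E$ is a stack over $(\mc C,J)$ (this being preserved by $[\ms A,-]$ and by passage to $\mc C$-multiplicative parts):
\[[\ms C^I,[\ms A,\ms E]_\tm]\cong[\ms A,[\ms C^I,\ms E]]_\tm\cong[\ms A,[R,\ms E]]_\tm\cong[R,[\ms A,\ms E]_\tm].\]
It then remains to check that the composite is naturally isomorphic to the restriction (descent) functor of $[\ms A,\ms E]_\tm$ along $R\into\ms C^I$; granting this, descent holds and $[\ms A,\ms E]_\tm$ is a stack over $\mc C$.

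\emph{Extension to $\mc E$ and $\mc E$-products.} By the comparison lemma, the stack just obtained extends essentially uniquely to a stack over the canonical topology of $\mc E$; rerunning the Fubini computation with covering sieves of an object $P\in\mc E$ in place of $R$ identifies the fibers of this extension with $[\ms A,\ms E^P]_\tm$, as anticipated above. To see the extension has $\mc E$-products, fix $\phi\colon P\to Q$ in $\mc E$. Since $\mc E$ is a topos, $\ms E$ (over $\mc E$) is bifibered with fibered adjunctions $\ms E_\phi\dashv\ms E^\phi\dashv\ms E_{\phi*}$, and both $\ms E^\phi$ (pullback, hence left exact) and $\ms E_{\phi*}$ (a fibered right adjoint) are $\mc C$-multiplicative, i.e. commute with $\mc C$-ary products. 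Post-composition therefore lifts the adjunction $\ms E^\phi\dashv\ms E_{\phi*}$ to an adjunction $[\ms A,\ms E^\phi]_\tm\dashv[\ms A,\ms E_{\phi*}]_\tm$, so that the restriction functor of $[\ms A,\ms E]_\tm$ along $\phi$ has a right adjoint. Finally, the (dual) Beck--Chevalley condition for $[\ms A,\ms E]_\tm$ over $\mc E$ is checked pointwise in the $\ms A$-variable, where it reduces to the Beck--Chevalley condition for $\ms E$ over $\mc E$ — which holds because $\mc E$ is a topos. Hence the extension has $\mc E$-products.

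\emph{Main obstacle.} The content is modest and the difficulty is bookkeeping, concentrated in the descent step: one must track the curried equivalences carefully enough to confirm that their composite really is the restriction functor along $R\into\ms C^I$, rather than a category merely equivalent to the one that should play that role. A minor additional check is that the argument genuinely uses only the hypothesis that $\ms A$ has $\mc C$-products — never smallness of $\ms A$ nor existence of $\mc C$-sums in $\ms A$.
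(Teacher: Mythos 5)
Your proposal is correct and follows essentially the paper's argument: the same currying equivalence $[\ms B,[\ms A,\ms E]_\tm]\cong[\ms A,[\ms B,\ms E]]_\tm$ combined with the stack property of $\ms E$ yields descent, and the lifted adjunctions $[\ms A,\ms E^\phi]_\tm\dashv[\ms A,\ms E_{\phi*}]_\tm$ (with Beck--Chevalley checked pointwise in $\ms E$) yield the $\mc E$-products. The only, harmless, organizational difference is that the paper defines the extension directly by the pseudofunctor $P\mapsto[\ms A,\ms E^P]_\tm$ and verifies descent for covering sieves of objects of $\mc E$, whereas you verify descent over $(\mc C,J)$ first and then invoke the comparison lemma, identifying the fibers afterwards by the same computation.
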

If $\tau: \ms C \to \ms T$ is a standard $\mc C$-ary theory, we can reframe this as a statement above $\Mod^{\ms T}$.
\begin{corollary}
The $\mc E$-category $y_* \Mod^{\ms T}$ is a stack over $\mc E$ and has $\mc E$-products.
\end{corollary}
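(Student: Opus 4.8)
The plan is to obtain the corollary directly from the preceding theorem by taking $\ms A = \ms T^{op}$ and then transporting the conclusion along the Lawvere--Linton comparison equivalence established for standard $\mc C$-ary theories.

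First I would note that, because $\ms T$ has $\mc C$-sums, the opposite category $\ms T^{op}$ has $\mc C$-products, so the preceding theorem applies with $\ms A = \ms T^{op}$. It yields that the $\mc C$-category $[\ms T^{op},\ms E]_\tm = \Mod^{\ms T}_{\Lawvere}(\ms E)$ is a stack over $(\mc C,J)$, and that the fibered $\mc E$-category extending it, namely $P \mapsto [\ms T^{op},\ms E^P]_\tm = \Mod^{\ms T}_{\Lawvere}(\ms E^P)$, is a stack for the canonical topology on $\mc E$ and has $\mc E$-products. Restricting that $\mc E$-stack along the Yoneda embedding $y:\mc C\to\mc E$ returns $\Mod^{\ms T}_{\Lawvere}(\ms E)$ as a $\mc C$-category, since $\ms E^{yI}\cong\ms E^I$; in symbols, $y^{*}[\ms T^{op},\ms E]_\tm \simeq \Mod^{\ms T}_{\Lawvere}(\ms E)$.

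Next I would invoke the comparison theorem: for a standard $\mc C$-ary theory the cartesian functor $\al$ is an equivalence $\Mod^{\ms T}_{\Lawvere}(\ms E) \simeq \Mod^{\ms T}_{\Linton} = \Mod^{\ms T}$ in $\FIB^{\mc C}$. Applying the change-of-base 2-functor $y_{*}$ (which, being a 2-functor, preserves equivalences) gives $y_{*}\Mod^{\ms T} \simeq y_{*}\Mod^{\ms T}_{\Lawvere}(\ms E) \simeq y_{*}y^{*}[\ms T^{op},\ms E]_\tm$. By the comparison lemma, $y^{*}$ restricts to an equivalence from $\mc E$-stacks for the canonical topology onto $\mc C$-stacks, with $y_{*}$ an inverse; since $[\ms T^{op},\ms E]_\tm$ is such an $\mc E$-stack, $y_{*}y^{*}[\ms T^{op},\ms E]_\tm \simeq [\ms T^{op},\ms E]_\tm$. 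Chaining these, $y_{*}\Mod^{\ms T}\simeq[\ms T^{op},\ms E]_\tm$, which the preceding theorem tells us is a stack over $\mc E$ with $\mc E$-products; as both properties are invariant under equivalence of fibered $\mc E$-categories, so is $y_{*}\Mod^{\ms T}$.

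I do not expect a serious obstacle, since the real content lies in the preceding theorem and in the comparison theorem $\Mod^{\ms T}_{\Lawvere}(\ms E)\simeq\Mod^{\ms T}_{\Linton}$. The only delicate point is the identification at the end of the previous paragraph: one must check that the ``extension to a stack over $\mc E$'' appearing in the preceding theorem — built directly by the assignment $P\mapsto[\ms T^{op},\ms E^P]_\tm$ — coincides with $y_{*}$ applied to the $\mc C$-stack $\Mod^{\ms T}$, i.e. that $y_{*}$ genuinely implements the inverse to $y^{*}$ on the relevant 2-categories of stacks and is compatible with that pointwise construction. That bookkeeping, together with the check that $y_{*}$ behaves well on the (possibly large) fibered categories at hand, is where I would concentrate the small amount of remaining work.
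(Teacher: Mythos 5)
Your proposal is correct and follows essentially the same route as the paper, which derives the corollary immediately from the preceding theorem applied to $\ms A=\ms T^{op}$, using the standardness of $\tau$ (the Lawvere--Linton equivalence $\Mod^{\ms T}\simeq[\ms T^{op},\ms E]_\tm$) and the essential uniqueness, via the comparison lemma, of the extension of this $\mc C$-stack to an $\mc E$-stack, which is what the notation $y_*\Mod^{\ms T}$ denotes. The bookkeeping you flag at the end is exactly the content the paper absorbs into that uniqueness statement, so there is no gap.
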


\section{Colimit constructions}
\subsection{Modelification}
We use the set-theoretic notion of a Linton model to construct various colimits in categories of models. All of these constructions rely on the following fundamental fact. 

\begin{proposition}
Let $\mc C$ and $\mc T$ be small categories, and let $\tau: \mc C \to \mc T$ be a Linton theory. The inclusion
\[\Mod^{\mc T} \to \Psh^{\mc T}\]
admits a left adjoint.
\end{proposition}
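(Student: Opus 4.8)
The plan is to realise $\Mod^{\mc T}$ as a reflective orthogonality class inside the locally presentable category $\Psh^{\mc T}$ and then to quote the standard reflection theorem. Write $\tau^* = \Psh^\tau \colon \Psh^{\mc T} \to \Psh^{\mc C}$ for restriction along $\tau$; it has a left adjoint $\tau_! = \Psh_\tau$, the left Kan extension. By definition, $M \in \Psh^{\mc T}$ is a Linton model exactly when $\tau^* M$ is a $J$-sheaf on $\mc C$, where $J$ is the given topology.

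First I would record that sheaves form an orthogonality class. A presheaf $N$ on $\mc C$ is a $J$-sheaf iff for every object $I$ and every $J$-covering sieve $S \hookrightarrow yI$ the map $N(I) = \Psh^{\mc C}(yI, N) \to \Psh^{\mc C}(S, N)$ is a bijection, i.e. iff $N$ is orthogonal to every member of the set $\Sigma = \{\, S \hookrightarrow yI : I \in \mc C,\ S \in J(I)\,\}$; since $\mc C$ is small, $\Sigma$ is a set. Next I would transfer this condition across the adjunction $\tau_! \dashv \tau^*$: for any morphism $f$ of $\Psh^{\mc C}$, the object $\tau^* M$ is orthogonal to $f$ iff $M$ is orthogonal to $\tau_! f$ in $\Psh^{\mc T}$ — a one-line chase using naturality of the adjunction bijection $\Psh^{\mc C}(X,\tau^*M) \cong \Psh^{\mc T}(\tau_! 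X, M)$. Hence $M$ is a Linton model iff $M$ lies in $\Lambda^\perp$, where $\Lambda = \{\, \tau_!(S \hookrightarrow yI) : I \in \mc C,\ S \in J(I)\,\}$ is again a small set of morphisms of $\Psh^{\mc T}$. Thus $\Mod^{\mc T} = \Lambda^\perp$ as full subcategories of $\Psh^{\mc T}$.

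Finally I would invoke the fact that, for a small set $\Lambda$ of morphisms in a locally presentable category $\mathcal K$, the full subcategory $\Lambda^\perp$ of $\Lambda$-orthogonal objects is reflective; as $\mc T$ is small, $\Psh^{\mc T}$ is locally presentable, so the inclusion $\Mod^{\mc T} \to \Psh^{\mc T}$ admits a left adjoint. If one prefers an explicit construction of the reflector $L$ — which will be convenient for the colimit computations that follow — one can instead run the small-object argument for $\Lambda$: form a transfinite sequence $M = M_0 \to M_1 \to \cdots$, where $M_{\alpha+1}$ is obtained from $M_\alpha$ by freely adjoining, for each member of $\Lambda$ and each commuting square over it, the required diagonal filler and then coequalising to force uniqueness, and set $LM = \colim_\alpha M_\alpha$. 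This is the iterated plus construction for sheafification, transported from $\mc C$ to $\mc T$ along $\tau_!$.

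The only genuine content lies in the middle step: the orthogonality reformulation of the sheaf condition together with its transfer along $\tau_! \dashv \tau^*$; everything afterward is an off-the-shelf application of localization machinery. The one point demanding care is size — that $\Lambda$ really is a set (it is, because $\mc C$ is small and each representable has only a set of subobjects) and that the domains $\tau_! S$ of the maps in $\Lambda$ are presentable (being small colimits of representables), so that the small-object argument terminates.
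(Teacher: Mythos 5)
Your argument is correct, but it follows a genuinely different route from the paper. The paper's proof stays entirely in the language of sketches: it observes that the sheaf condition on $\Psh^{\mc C}$ is the condition of being a model for a limit sketch $\Phi$ consisting of (small) cocones in $\mc C$, notes that composing these cocones with $\tau$ yields a limit sketch $\tau\Phi$ on $\mc T$ whose models are exactly the Linton models, and then cites the standard theorem that models of a small limit sketch form a reflective subcategory of the presheaf category. You instead encode the sheaf condition as orthogonality against the set $\Sigma$ of covering-sieve inclusions $S \into yI$, transfer it across the adjunction $\tau_! \dashv \tau^*$ to get $\Mod^{\mc T} = \Lambda^\perp$ with $\Lambda = \tau_!\Sigma$, and invoke reflectivity of small-orthogonality classes in the locally presentable category $\Psh^{\mc T}$ (or, concretely, the small-object argument). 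Both reductions are sound and both end by quoting off-the-shelf localization machinery; the differences are mostly in what each buys. The paper's transfer step is slightly more elementary (it is just post-composition of cocones with $\tau$, no Kan extension or adjunction needed) and plugs directly into the sketch-theoretic framework the paper uses elsewhere. Your version makes the adjunction $\tau_!\dashv\tau^*$ do the work, which gives a clean one-line equivalence of conditions and, more usefully, an explicit transfinite construction of the reflector; your attention to the size points (that $\Sigma$ and hence $\Lambda$ are sets, and that all objects of $\Psh^{\mc T}$ are presentable) is exactly what makes that citation legitimate. Either proof is acceptable; they are two presentations of the same underlying localization.
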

\begin{proof}
There is a limit sketch on $\mc C$ whose models are sheaves. That is, there is a collection $\Phi$ of cocones in $\mc C$, such that $P \in \Psh^{\mc C}$ is a sheaf if and only if for each $\G \in \Phi$ the cone $P \G$ is a limit cone in $\Set$.

It follows immediately that $\Mod^{\mc T}$ is the category of models for a limit sketch on $\mc T$. In fact, a presheaf $M \in \Psh^{\mc T}$ is a model of $\mc T$ if, for every $\G \in \tau \Phi$, the cone $M \G$ is a limit cone.

The claim then follows from a well-known theorem about limit sketches.
\qed\end{proof}

\subsection{Adjoints to restriction functors}
Let $\mc T$ and $\mc T'$ be limit sketches (e.g. Linton theories). We say that $G: \mc T \to \mc T'$ is {\em sketchy} if the restriction functor $\Psh^G: \Psh^{\mc T'} \to \Psh^{\mc T}$ preserves models. 

 Thus if $G$ is sketchy we have a commutative diagram
\[\begin{tikzcd}
\Mod^{\mc T'}\ar{r}{\Mod^G}\ar[tail]{d} & \ar[tail]{d}\Mod^{\mc T}\\
\Psh^{\mc T'} \ar{r}{\Psh^G}& \Psh^{\mc T}
\end{tikzcd}\]
Since $\Psh^G$ always has a left adjoint $\Psh_{G}$ (left Kan extension), and the inclusion of models into presheaves is reflective, we may apply the adjoint lifting theorem to show that the restriction functor $\Mod^G$ admits a left adjoint $\Mod_{G}$. 

\subsection{The free model functor}
We will need the following lemma to construct the free model functor as a $\mc C$-fibered left adjoint to the forgetful $\mc C$-functor. 
\begin{proposition}
\label{leftadjoint}
Let $\ms S$ and $\ms T$ be standard $\mc C$-ary theories. If $H: \ms S \to \ms T$ is $\mc C$-additive, and each component $H^I$ is sketchy, then the restriction $\mc C$-functor 
\[\Mod^H: \Mod^{\ms T} \to\Mod^{\ms S}\]
has a $\mc C$-fibered left adjoint.
\end{proposition}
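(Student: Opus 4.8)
The plan is to recognize $\Mod^H$ as the restriction, to the full subfibrations of models, of the precomposition functor between presheaf fibrations, obtain the left adjoint by the usual fibrewise reflection argument, and then verify the Beck--Chevalley condition that makes the resulting adjoint a \emph{fibered} one. To begin: since the reindexing functors of $\Psh^{\ms S}$ and $\Psh^{\ms T}$ are precomposition with the left-extension functors $\ms S_\phi$ and $\ms T_\phi$, and $\ms T_\phi H^I \cong H^J \ms S_\phi$ by $\mc C$-additivity of $H$, precomposition $\Psh^H : \Psh^{\ms T} \to \Psh^{\ms S}$ commutes with reindexing, i.e.\ is a cartesian $\mc C$-functor; as each $H^I$ is sketchy it restricts to $\Mod^H : \Mod^{\ms T} \to \Mod^{\ms S}$. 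Each fibre $\Mod^{H^I}$ is then the restriction of $\Psh^{H^I}$ to the reflective subcategories of models --- these are reflective because $\ms S$ and $\ms T$ are standard, so that $\Mod^{\ms S_I}$ and $\Mod^{\ms T_I}$ are categories of models of limit sketches, as in the Modelification proposition --- and $\Psh^{H^I}$ has the left Kan extension $\Psh_{H^I}$ for a left adjoint, so the adjoint lifting theorem, exactly as in the discussion of adjoints to restriction functors above, produces a left adjoint $L$ to $\Mod^H$, namely the composite $L = r \circ \Psh_H \circ \iota$: include models into presheaves, left Kan extend along $H$, then modelify.

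It remains to show $L$ is a \emph{cartesian} $\mc C$-functor, equivalently that for each $\phi : I \to J$ the canonical Beck--Chevalley mate $L^I \circ (\Mod^{\ms S})^\phi \Rightarrow (\Mod^{\ms T})^\phi \circ L^J$ is invertible. Since $\iota$ is patently cartesian and $L = r \circ \Psh_H \circ \iota$, it suffices that $\Psh_H$ and $r$ each commute with reindexing. For $\Psh_H$: one must check $\Psh_{H^I} \circ \Psh^{\ms S_\phi} \cong \Psh^{\ms T_\phi} \circ \Psh_{H^J}$; both composites are cocontinuous, so it is enough to evaluate on a representable $y(s)$, $s \in \ms S_J$, where --- using the adjunctions $\ms S_\phi \dashv \ms S^\phi$ and $\ms T_\phi \dashv \ms T^\phi$, the fact that left Kan extension along $H$ sends representables to representables, and the fact that $H$ commutes with the restriction functors ($H^I \ms S^\phi \cong \ms T^\phi H^J$, $H$ being a cartesian $\mc C$-functor) --- both sides return the representable on $H^I \ms S^\phi s \cong \ms T^\phi H^J s$. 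For $r$: the reindexing $\Psh^{\ms T_\phi}$ is precomposition with $\ms T_\phi$, which has the right adjoint $\ms T^\phi$ and hence preserves representables, and --- this is where standardness of $\ms T$, together with the fact that the restrictions and left extensions of $\mc C$ are continuous morphisms of sites, enters --- it carries the generating cones of the limit sketch presenting $\Mod^{\ms T_J}$ to generating cones of the one presenting $\Mod^{\ms T_I}$; being cocontinuous, it therefore commutes with the modelification reflectors. Combining the two, a routine check identifies the resulting comparison with the Beck--Chevalley $2$-cell.

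I expect the genuine obstacle to be the last point: modelification is not left exact, so its commuting with reindexing is not a formality. What makes it go through is that the reindexing functor is precomposition along a functor ($\ms T_\phi$) that has a right adjoint, hence is both representable-preserving and compatible with the sheaf-type cones that cut out models, and that "standardness" of the theory is tailored precisely so that the pertinent restriction functors of $\mc C$ are continuous morphisms of sites; one should expect to spell this out via the identity $M\ms T_\phi\tau^I \cong M\tau^J\ms C_\phi$ already used to show $\Mod^{\ms T}$ is closed under restriction. The remaining work --- checking the assembled isomorphisms really are the canonical Beck--Chevalley cell --- is pure bookkeeping, and no set-theoretic difficulty arises since $\ms S$, $\ms T$ and $\mc C$ are all small.
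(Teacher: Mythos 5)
Your construction of the fibrewise left adjoint ($L=r\circ\Psh_H\circ\iota$ via the adjoint lifting theorem) is exactly the paper's, but your verification of the fibered (Beck--Chevalley) condition goes a genuinely different way. The paper never proves that modelification commutes with reindexing: it observes that cartesianness of $H$ makes the comate of $\al_\phi$ under $\ms S_\phi\dashv\ms S^\phi$ and $\ms T_\phi\dashv\ms T^\phi$ invertible, hence (applying $\Mod$) the induced transformation between the \emph{right} adjoints $\Mod^{\ms T^\phi},\Mod^{\ms S^\phi}$ of the reindexings is invertible, and then invokes the standard fact that a transformation between composite left adjoints is invertible iff its conjugate between the composite right adjoints is. This is shorter and sidesteps all interaction between the reflectors and reindexing; your route buys an explicit description of how the comparison isomorphism is built, at the cost of having to control the reflector.

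That cost is where your argument, as written, has a gap. The identity $M\ms T_\phi\tau^I\cong M\tau^J\ms C_\phi$ that you propose to ``spell things out via'' only says that the reindexing $\Psh^{\ms T_\phi}$ preserves models --- the closure statement already in the paper --- and that is strictly weaker than what you need, namely that $r_I\Psh^{\ms T_\phi}\cong\Psh^{\ms T_\phi}r_J$. The correct input is dual: since $\ms T_\phi\dashv\ms T^\phi$, the reindexing $\Psh^{\ms T_\phi}$ is isomorphic to left Kan extension $\Psh_{\ms T^\phi}$ along $\ms T^\phi$, and such a left Kan extension commutes with the reflectors precisely when its right adjoint, precomposition with $\ms T^\phi$, preserves models (then $r_I\Psh^{\ms T_\phi}$ inverts every map inverted by $r_J$, by adjointness against models over $I$, and the factorization through $r_J$ agrees with reindexing on models). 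That precomposition with $\ms T^\phi$ preserves models follows from the identity $N\ms T^\phi\tau^J\cong N\tau^I\ms C^\phi$ (cartesianness of $\tau$) together with continuity of the pullback functors $\ms C^\phi$, which preserve covers and finite limits --- not from the $\ms C_\phi$ identity you cite. With that substitution your argument closes (and it is worth noting that the paper's mate argument tacitly uses the same fact, in the guise of the adjunctions $\Mod^{\ms T_\phi}\dashv\Mod^{\ms T^\phi}$); the remaining identification of your composite isomorphism with the canonical Beck--Chevalley cell is, as you say, bookkeeping.
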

\begin{proof}
For each $I$ in $\mc C$, we have an adjunction
\[\Mod_{H_I} \dashv \Mod^{H_I}: \Mod^{\ms T_I} \rla \Mod^{\ms S_I}.\]
Since $H$ is opcartesian, we have for every $\phi: I \to J$ in $\mc C$, two canonical isomorphisms
\[\begin{tikzcd}[row sep=large, column sep=large]
\ms S_I
\ar{d}[']{\ms S_{\phi}}\ar{r}{H_I}& \ms T_I\ar{d}{\ms T_{\phi}}\\
\sar{ur}{\al_\phi}
\ms S_J\ar{r}[']{H_J}& \ms T_J
\end{tikzcd}\qq\AND\qq
\begin{tikzcd}[row sep=large, column sep=large]
\Mod^{\ms T_I}
\ar[leftarrow]{d}[']{\Mod^{\ms T_{\phi}}}\ar[leftarrow]{r}{\Mod^{H_I}}& \Mod^{\ms S_I}\ar[leftarrow]{d}{\Mod^{\ms S_{\phi}}}\sar{dl}{\Mod^{\al_\phi}}\\
\Mod^{\ms T_J}\ar[leftarrow]{r}[']{\Mod^{H_J}}&\Mod^{ \ms S_J},
\end{tikzcd}
\]
In order for $\Mod^H$ to have a fibered left adjoint, we must show that mate of $\Mod^{\al_\phi}$ under the adjunctions $\Mod_{H_I}\dashv \Mod^{H_I}$ and $\Mod_{H_J} \dashv \Mod^{H_J}$ is an isomorphism. 

In fact, the comate of $\al_\phi$ under the adjunctions $\ms S_{\phi} \dashv \ms S^\phi$ and $\ms T_{\phi} \dashv \ms T^\phi$ is a isomorphism, because $H$ is cartesian. It follows immediately that the comate of $\Mod^{\al_\phi}$ under the adjunctions $\Mod^{\ms S_{\phi}} \dashv \Mod^{\ms S^{\phi}}$ and $\Mod^{\ms T_{\phi}} \dashv \Mod^{\ms T^\phi}$ is an isomorphism. By a general property of mates, this implies that the mate of $\Mod^{\al_\phi}$ under $\Mod_{H_I} \dashv \Mod^{H_I}$ and $\Mod_{H_J}\dashv \Mod^{H_J}$ is an isomorphism as well.
\qed\end{proof}
\begin{corollary}
Let $\tau: \ms C \to \ms T$ be a standard $\mc C$-ary theory. The forgetful functor $U: \Mod^{\ms T} \to \ms E$ has a $\mc C$-fibered  left adjoint.
\end{corollary}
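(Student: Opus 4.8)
The plan is to obtain this as an instance of Proposition~\ref{leftadjoint}. Specifically, I would apply that proposition to the $\mc C$-additive identity-on-objects functor $H = \tau$ itself, regarded as a morphism from the theory of objects $\ms S = \ms C$ (with its identity functor $\id:\ms C\to\ms C$) to $\ms T$. Since $\Mod^{\ms C}_{\Linton}\cong\ms E$ and, under this identification, the restriction functor $\Mod^\tau:\Mod^{\ms T}\to\Mod^{\ms C}$ becomes the forgetful functor $U$, a $\mc C$-fibered left adjoint to $\Mod^\tau$ is precisely a $\mc C$-fibered left adjoint to $U$.

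To apply Proposition~\ref{leftadjoint} I must check three hypotheses. First, $\id:\ms C\to\ms C$ is a standard $\mc C$-ary theory: it is $\mc C$-additive and identity-on-objects, $\ms C$ is small with $\mc C$-sums by construction, and for each $I$ every representable presheaf on $\ms C^I\cong\mc C_{/I}$ is a sheaf for the induced topology, because $\mc C$ is a standard site and subcanonicity is stable under localization. Second, $\tau$ is $\mc C$-additive by the very definition of a $\mc C$-ary Lawvere theory, so there is nothing to prove. Third, each component $\tau^I:\ms C^I\to\ms T^I$ is \emph{sketchy}: a Linton model of $\tau^I$ is by definition a presheaf $M$ on $\ms T^I$ whose restriction $\Psh^{\tau^I}M = M\cp\tau^I$ is a sheaf on $\ms C^I$, while the models of the degenerate theory $\ms C^I$ are precisely the sheaves on $\ms C^I$, so $\Psh^{\tau^I}$ carries models to models tautologically. (Equivalently, if $\Phi^I$ is a limit sketch on $\ms C^I$ presenting the sheaves, then $\ms T^I$ carries the pushed-forward sketch $\tau^I\Phi^I$, and restriction along $\tau^I$ plainly preserves the model condition.)

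With these verifications in hand, Proposition~\ref{leftadjoint} produces a $\mc C$-fibered left adjoint to $\Mod^\tau$, and transporting it along the equivalences $\Mod^{\ms C}_{\Linton}\cong\ms E$ and $\Mod^{\ms T}_{\Linton}\cong\Mod^{\ms T}_{\Lawvere}(\ms E)$ yields the asserted $\mc C$-fibered left adjoint to $U$. The substantive work has already been carried out upstream: the modelification proposition supplies the fiberwise left adjoints $\Mod_{\tau^I}$ via the adjoint lifting theorem for limit sketches, and Proposition~\ref{leftadjoint} handles the Beck--Chevalley/mate compatibility needed to assemble them into a fibered functor. The only point specific to this corollary is the (immediate) observation that each $\tau^I$ is sketchy; I do not anticipate any genuine obstacle beyond correctly matching up the various equivalences.
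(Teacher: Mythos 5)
Your proposal is correct and is exactly the argument the paper intends: the corollary is stated as an immediate consequence of Proposition~\ref{leftadjoint} applied to $H=\tau$ viewed as a morphism from the degenerate theory $\id:\ms C\to\ms C$ to $\ms T$, using $\Mod^{\ms C}_{\Linton}\cong\ms E$ so that $\Mod^\tau$ is the forgetful functor $U$. Your verification that each $\tau^I$ is sketchy (tautologically, since a Linton model of $\tau^I$ restricts to a sheaf by definition) is precisely the only point that needs checking.
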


\subsection{Enlarging the sketch}
In order to show cocompleteness for the $\mc E$-category $y_*\Mod^{\ms T}$, we will use the comparison lemma~\cite[SGA 4.III.4]{ArtinGrothendieckVerdier1972} for categories of sheaves to avoid size issues due to the use of large sketches. We note that the ideas in this section are almost entirely due to Linton~\cite{Linton1969} and Giraud~\cite[II]{Giraud1971}.

\begin{lemma}\label{comparison}
Let $\tau: \mc C\to \mc T$ be a Linton theory. Suppose we have a diagram
\[\begin{tikzcd}
\car{dr}{\cong}\mc C\ar{r}{y_{\mc C}}\ar{d}{ \tau } & \mc E\ar{d}{\bar{ \tau }}\ar{dr}{F}&\ \\
\mc T\ar{r}{Z} & \bar{\mc T}\ar{r}{\io}&\Mod^{\mc T},
\end{tikzcd}\]
where $F$ is the left adjoint to the forgetful functor, the functor $\bar \tau$ is identity on objects, and the functors $Z$ and $\io$ are fully faithful. Then $\Mod^{Z}$ is an equivalence of categories.
\end{lemma}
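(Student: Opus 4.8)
The plan is to construct an explicit quasi-inverse to $\Mod^{Z}$, along the lines of Linton's comparison of a theory with its Kleisli category. Set $\Phi\colon\Mod^{\mc T}\to\Psh^{\bar{\mc T}}$, $\Phi(M)=\Mod^{\mc T}(\io(-),M)$, the restricted Yoneda embedding along $\io$. I will use the hypotheses of the lemma freely --- $\io\bar\tau\cong F$, $Z\tau\cong\bar\tau\,y_{\mc C}$, $F\dashv U$ with $U$ the forgetful functor $\Mod^{\mc T}\to\mc E$, $\bar\tau$ identity on objects, $Z$ and $\io$ fully faithful --- and the fact that $\mc E=\Sh(\mc C)$, so that a presheaf on $\mc E$ is a Linton model of the trivial $\mc E$-ary theory exactly when it is a sheaf for the canonical topology of $\mc E$, i.e.\ (since $\mc E$ is a Grothendieck topos) exactly when it is representable.

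First I would check three things. (i) $\Phi$ lands in $\Mod^{\bar{\mc T}}$: because $\io\bar\tau\cong F$ and $\bar\tau$ is the identity on objects, $\bar\tau^{*}\Phi(M)$ is the presheaf $P\mapsto\Mod^{\mc T}(FP,M)$ on $\mc E$, which by $F\dashv U$ is isomorphic to the representable $\mc E(-,UM)$, hence a model of the trivial $\mc E$-ary theory. (ii) $\Phi$ is fully faithful: the free models $\{FP\}_{P\in\mc E}$ --- the essential image of $\io$ --- form a dense subcategory of $\Mod^{\mc T}$ (every model is the canonical colimit of the free models over it; concretely, a reflexive coequalizer of the free models $FUFUM$ and $FUM$), and $\Phi$ is the associated restricted Yoneda embedding. (iii) $\Mod^{Z}\Phi\cong\id$: for an object $c$ of $\mc T$, $(\Phi(M)\circ Z)(c)=\Mod^{\mc T}(\io Zc,M)\cong\Mod^{\mc T}(F\,y_{\mc C}c,M)\cong\mc E(y_{\mc C}c,UM)\cong(UM)(c)=M(c)$, using $\io Z\cong F\,y_{\mc C}$ on objects, $F\dashv U$, and Yoneda for the sheaf $UM=\tau^{*}M$; this is natural in $c$ and $M$. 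From (ii) and (iii), $\Mod^{Z}$ is an equivalence if and only if $\Phi$ is essentially surjective, i.e.\ if and only if every Linton model $N$ of $\bar\tau$ is isomorphic to $\Phi(NZ)$.

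To produce such an isomorphism I would use that every object of $\bar{\mc T}$ is, under $\bar\tau$, a sheaf $P\in\mc E$, hence the canonical colimit $P\cong\colim_{y_{\mc C}c\to P}y_{\mc C}c$ of representables. For a model $N$ of $\bar\tau$, the restriction $NZ$ is a model of $\mc T$ (as $\tau^{*}(NZ)\cong y_{\mc C}^{*}\bar\tau^{*}N$ is a sheaf on $\mc C$, and in fact $\bar\tau^{*}N\cong\mc E(-,U(NZ))$). Since $\bar\tau^{*}N$ is representable it carries the colimit presentation of $P$ to a limit, so $N(FP)\cong\lim N(Zc)$; on the other hand, since $F$ preserves colimits, $\Phi(NZ)(FP)=\Mod^{\mc T}(FP,NZ)\cong\lim\Mod^{\mc T}(F\,y_{\mc C}c,NZ)\cong\lim N(Zc)$ by (iii). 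Thus $N$ and $\Phi(NZ)$ agree on objects, and they agree on the image of $Z$ by (iii) again.

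The remaining step --- and the one I expect to be the main obstacle --- is to promote this to a natural isomorphism of presheaves over all of $\bar{\mc T}$, not just over $Z(\mc T)$: one must show that the value of a model at an arbitrary object $P$ and, crucially, its action on the morphisms of $\bar{\mc T}$ that do not lie in the image of $\bar\tau$ are forced by its restriction along $Z$ together with the colimit presentations above. This is precisely the rigidity packaged by the comparison lemma \cite[SGA 4.III.4]{ArtinGrothendieckVerdier1972}: the functor $Z$ realizes $\mc T$ as a dense subsite of $\bar{\mc T}$, and restriction along a dense inclusion is fully faithful on the appropriate subcategories of presheaves, so the pointwise isomorphism above is automatically natural. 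This finishes the verification that $\Phi$ is essentially surjective, hence that $\Mod^{Z}$ is an equivalence with quasi-inverse $\Phi$. A byproduct worth noting is that $\Mod^{\bar{\mc T}}$ is thereby seen to be locally small even though $\bar{\mc T}$ is a large category; handling this size issue cleanly is exactly why it pays to go through the comparison lemma rather than through the large sketch that directly presents $\Mod^{\bar{\mc T}}$.
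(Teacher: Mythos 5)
Your construction is the paper's: your $\Phi$ is exactly the right adjoint $R(M)=\Mod^{\mc T}(\io(-),M)$ used there, and your steps (i) and (iii) are the same computations (that $\bar\tau^*\Phi(M)\cong\mc E(-,UM)$ is a sheaf, and that $\Mod^Z\Phi\cong\id$ by $\io Z\cong Fy_{\mc C}$ and adjunction). The gap is in your last step. The comparison lemma of SGA~4.III.4 is a statement about categories of \emph{sheaves on sites}, and $\Mod^{\bar{\mc T}}$, $\Mod^{\mc T}$ are not sheaf categories: they are reflective subcategories of presheaf categories cut out by a limit-sketch condition, in general not toposes (already for $\mc C=\aln$ one gets groups, rings, etc.), and no topology on $\mc T$ or $\bar{\mc T}$ has been exhibited for which models coincide with sheaves. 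So the assertion ``$Z$ realizes $\mc T$ as a dense subsite of $\bar{\mc T}$, hence restriction along $Z$ is fully faithful on the appropriate subcategories of presheaves'' is not an instance of the cited lemma; it is equivalent to the statement you are proving (indeed, if you had full faithfulness of $\Mod^Z$, then together with your (iii) you would be done immediately, with no need for (ii) or the pointwise computation). Your pointwise identification $N(\bar\tau P)\cong\Phi(NZ)(\bar\tau P)$ only exploits naturality with respect to morphisms in the image of $\bar\tau$; the whole content is compatibility with the remaining morphisms of $\bar{\mc T}$, i.e.\ the ``Kleisli'' maps $FP\to FQ$ that are not $F$ of a map in $\mc E$, which is precisely what remains unproved. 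The paper closes this step differently: $\Phi=R$ is right adjoint to $Z^*=\Mod^Z$, the counit is invertible because $Z$ is fully faithful (your (iii)), and the unit $\id\to RZ^*$ is shown invertible by whiskering with $y^*\bar\tau^*$, which is conservative since $\bar\tau$ is identity on objects and $y^*:\Mod^{\mc E}\to\Mod^{\mc C}$ \emph{is} an equivalence by the genuine sheaf-theoretic comparison lemma; a mate/pasting computation then shows the whiskered unit is invertible. That is the legitimate way the comparison lemma enters, and it is the step your argument is missing.

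A secondary soft spot: in (ii) you justify full faithfulness of $\Phi$ by ``the free models form a dense subcategory because every model is a (canonical) reflexive coequalizer of free models.'' Colimit-density does not imply density; what you need is the Kleisli-type density argument, i.e.\ essentially that $U:\Mod^{\mc T}\to\mc E$ is monadic (which does hold here: $U$ is conservative since $\tau$ is identity on objects, and $U$-split coequalizers, being absolute, computed in $\Psh^{\mc T}$ restrict to sheaves and so stay in $\Mod^{\mc T}$), so this can be repaired with some work --- but note that on the paper's route this ingredient is not needed at all.
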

\begin{proof}
Let $Z^* = \Mod^{Z}$. Then $Z^*$ has a {\em right} adjoint $R$, defined by
\[(RM)(\bar T) = \Mod^{\mc T}(\io \bar T, M),\]
which is essentially the right Kan extension along $Z$. For this to make sense, we must verify that $RM$ is a model whenever $M$ is. Indeed, we have
\[(RM)(\bar  \tau  X) \cong \Mod^{\mc T}(FX, M) \cong \mc E(X, UM),\]
which is a sheaf. 

The counit $Z^* R \to \id$ is an isomorphism, because $Z$ is full and faithful. To see that the unit is an isomorphism, we observe that
$\bar  \tau ^*: \Mod^{\bar {\mc T}} \to \Mod^{\mc E}$ is conservative because $\bar  \tau$ is identity on objects, and $y^*: \Mod^{\mc E} \to \Mod^{\mc C}$ is an equivalence by the comparison lemma for categories of sheaves. Thus the unit $\eta:\id \to RZ^*$ is an isomorphism if and only if this is true for the pasting diagram 
\[\begin{tikzcd}[column sep=large, row sep=large]
.\ar{dr}[name=R, pos=0.25]{R} & . \ar{r}{ \tau ^*}\car{dr}{\cong}&.&. \\
.\ar{u}{Z^*}\ar{r}[swap,name=id, pos=0.4]{\id} &.\ar{u}[']{Z^*}\ar{r}[']{\bar{ \tau }^*}&.\ar{u}[']{y^*}\arrow[Rightarrow,from=id, to=R, shorten <=5pt, shorten >= 10pt,"\eta"{xshift=-1ex}]
\end{tikzcd}\]
But the left-hand square in the diagram can be inverted by pasting with the counit of the adjunction. It follows that the indicated two-cell is invertible, and therefore the same is true of the counit $\e$, because $y^*\bar  \tau ^*$ is conservative.
\qed\end{proof}

Let $\tau: \ms C\to \ms T$ be a standard $\mc C$-ary theory, and let $F: \ms E \to \Mod^{\ms T}$ be the left adjoint to the forgetful functor. If we define $\bar {\ms C} = \ms E$, we may factor $F$ as 
\[\bar {\ms C} \rra{\bar\tau} \bar {\ms T} \rra{\bar\io} \Mod^{\ms T},\]
where $\bar{\tau}$ is identity on objects and $\bar\io$ is fully faithful.  An arrow of $\bar{\ms T}$ is cartesian or opcartesian iff this is true of the image under $\bar\io$. In particular, the $\mc C$-category $\bar{\ms T}$ is bifibered, and the functor $\bar \tau: \ms E \to \bar{\ms T}$ is bicartesian, because $F$ a fibered left adjoint and therefore bicartesian.

As $\Mod^{\ms T}$ is a stack over $\mc C$, the adjunction $F \dashv U$ in $\STACK^{\mc C}$ induces an adjunction $y_* F \dashv y_* U$ in $\STACK^{\mc E}$. We can interpret this as an adjunction 
\[\bar F \dashv \bar U: y_* \Mod^{\ms T} \rla \bbar{\ms C}.\]
Here $\bbar{\ms C} = \Arr(\mc E)$ is the fibered $\mc E$-category of arrows in $\mc E$, which is equivalent to $y_* \ms E$. By factoring $\bar F$ as 
\[\bbar{\ms C} \rra{ \bbar{\tau}} \bbar{\ms T}\rra{\bbar{\io}} y_* \Mod^{\ms T},\]
we obtain a Lawvere $\mc E$-theory. The topology on each site $\mc E^P$ is the canonical one, and since each functor $(y_*F)^P: (y_* \ms E)^P \to (y_* \Mod^{\ms T})^P$ is an ordinary left adjoint we find that all of the colimit cylinders in $\mc E^P$ are preserved by $\bbar{\tau}^P$. Although we have not shown that $y_* \Mod^{\ms T}$ has fibered sums, the functor $F$ does preserve the opcartesian arrows in $\bbar{\ms C}$, because it is a fibered left adjoint. It follows in particular that $\bbar{\ms T}$ is a bifibered $\mc E$-category.

\begin{proposition}
There is an equivalence of fibered $\mc E$-categories
\[\Mod^{\bbar{ \ms T}} \cong y_*\Mod^{\ms T}.\]
\end{proposition}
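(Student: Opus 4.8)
We prove this by applying the comparison Lemma~\ref{comparison} in each fibre over the dense subsite $y(\mc C)\subset\mc E$ and then gluing. Both $y_*\Mod^{\ms T}$ and $\Mod^{\bbar{\ms T}}$ are stacks for the canonical topology of $\mc E$ — the former by the preceding corollary, the latter for the same reason that established the stack theorem for $[\ms A,\ms E]_\tm$, its fibres being reflective subcategories of presheaf categories cut out by sheaf conditions. Since $y(\mc C)$ is a dense subsite of $\mc E$ with its canonical topology, it therefore suffices to produce a pseudonatural equivalence of the restrictions of the two stacks along $y\colon\mc C\to\mc E$ and then invoke the comparison lemma for stacks.

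Fix $I\in\mc C$ and work in the fibre over $yI$. Because $y$ is fully faithful we have $y^*y_*\ms E\cong\ms E$ and $y^*y_*\Mod^{\ms T}\cong\Mod^{\ms T}$, so under these identifications $\bar F^{yI}$ becomes the free-model functor $F^I\colon\ms E^I\to(\Mod^{\ms T})^I$ of the preceding corollary, $(\bbar{\ms T})^{yI}$ becomes the full image of $F^I$ inside $(\Mod^{\ms T})^I$, $\bbar\io^{yI}$ becomes the inclusion of that full image (fully faithful by construction), and $\bbar\tau^{yI}$ becomes the identity-on-objects corestriction of $F^I$. Let $Z^I\colon\ms T^I\to(\bbar{\ms T})^{yI}$ be the corestriction of the Yoneda embedding $G\mapsto\ms T^I(-,G)$; it is fully faithful, its values are Linton models precisely because $\tau$ is standard, and its essential image lies in the full image of $F^I$ since $\ms T^I(-,G)\cong F^I(y^IG)$. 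This last isomorphism is also the only commutativity to check for the comparison diagram, namely $\bbar\tau^{yI}\circ y^I\cong Z^I\circ\tau^I$; the triangle $\bbar\io^{yI}\circ\bbar\tau^{yI}=F^I$ is the factorisation of $\bar F$. Thus Lemma~\ref{comparison}, applied with arity site $\ms C^I$, small theory $\ms T^I$, Linton theory $\tau^I$ (subcanonical, as $\tau$ is standard), topos $\ms E^I\cong\Sh(\ms C^I)$, and the data above, yields that
\[\Mod^{Z^I}\colon(\Mod^{\bbar{\ms T}})^{yI}\longrightarrow(\Mod^{\ms T})^I\cong(y_*\Mod^{\ms T})^{yI}\]
is an equivalence.

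Pseudonaturality in $I$ of these equivalences — equivalently, of their quasi-inverses $R^I$, given by the right Kan extension formula $(R^IM)(\bar T)=(\Mod^{\ms T})^I\!\bigl(\bbar\io^{yI}\bar T,\,M\bigr)$ — follows from the naturality of that formula together with the functoriality in $I$ of the ingredients ($\bbar\io$, the homs of $\Mod^{\ms T}$, and the equivalences $y^*y_*\simeq\id$). Hence the restrictions of $\Mod^{\bbar{\ms T}}$ and $y_*\Mod^{\ms T}$ to $y(\mc C)$ are pseudonaturally equivalent, and by the comparison lemma for stacks this extends to an equivalence of fibered $\mc E$-categories $\Mod^{\bbar{\ms T}}\cong y_*\Mod^{\ms T}$.

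The main obstacle is keeping the size bookkeeping honest. The fibres of $\bbar{\ms T}$ are large, so $(\Mod^{\bbar{\ms T}})^{yI}$ must genuinely be identified with the category of models of a large theory in the sense of Lemma~\ref{comparison}, whose good behaviour (local presentability, in particular local smallness) relies on the SGA comparison lemma for sheaf categories invoked inside that lemma's proof; and the passage from the individual fibrewise equivalences to an equivalence of fibered $\mc E$-categories must be carried out at the level of stacks rather than by any ad hoc gluing of the separate functors $\Mod^{Z^I}$.
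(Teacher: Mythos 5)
Your proof follows essentially the same route as the paper: establish that $\Mod^{\bbar{\ms T}}$ is a stack over $\mc E$, reduce by the comparison lemma for stacks to the restrictions along $y$, and apply Lemma~\ref{comparison} fibrewise with the comparison functor $Z$ given by the corestricted Yoneda/free-model embedding (the paper packages your fibrewise equivalences as a single cartesian functor $\Mod^{Z}$ and applies $\ev_I: \FIB^{\mc C}\to\CAT$, which spares the explicit pseudonaturality check you sketch). The one point to tighten is the stackness of $\Mod^{\bbar{\ms T}}$: rather than appealing to its fibres being reflective subcategories cut out by sheaf conditions, the paper deduces it from the equivalence $\Mod^{\bbar{\ms T}}\cong[\bbar{\ms T}^{op},y_*\ms E]_\times$ combined with the earlier stack theorem for $[\ms A,\ms E]_\tm$.
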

\begin{proof}
 In fact, we have an equivalence $\Mod^{\bbar{\ms T}} \cong [\bbar{\ms T}^{op}, y_*\ms E]_\times$, which shows that $\Mod^{\bbar{\ms T}}$ is a stack over $\mc E$. By the comparison lemma for stacks, it then suffices to show that $y^* \Mod^{\bbar{\ms T}} \cong \Mod^{\ms T}$ as fibered $\mc C$-categories. Now, one can check that
\[y^*\Mod^{\bbar{\ms T}}\cong \Mod^{y^*\bbar{\ms T}} \cong \Mod^{\bar{\ms T}},\]
and we have $Z: \ms T \to \bar{\ms T}$ fitting into a diagram of fibered $\mc C$-categories and functors 
\[\begin{tikzcd}
\car{dr}{\cong}\ms C\ar{r}{y_{\ms C}}\ar{d}{ \tau } & \ms E\ar{d}{\bar{ \tau }}\ar{rd}{F}&\ \\
\ms T\ar{r}{Z} & \bar{\ms T}\ar{r}{\bar \io}&\Mod^{\ms T}.
\end{tikzcd}\]
We claim that $\Mod^{Z}: \Mod^{\bar{\ms T}} \to \Mod^{\ms T}$ is an equivalence. Equivalently, all of the components $\Mod^{Z_I}: \Mod^{\bar{\ms T}_I}\to \Mod^{\ms T_I}$ are equivalences. But this follows from Lemma~\ref{comparison}, by applying the 2-functor $\ev_I: \FIB^{\mc C} \to \CAT$.
\qed\end{proof}

\begin{proposition}
The fibered $\mc E$-category $\Mod^{\bbar{\ms T}}$ is bifibered, and its fibers are locally presentable.
\end{proposition}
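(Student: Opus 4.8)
The plan is to transfer both assertions through the equivalence $\Mod^{\bbar{\ms T}}\cong y_*\Mod^{\ms T}$ proved just above, so that the large theory $\bbar{\ms T}$ is never fed into the modelification machinery directly. First I would pin down two facts over the \emph{small} site $\mc C$: that $\Mod^{\ms T}$ is bifibered over $\mc C$, and that each fiber $\Mod^{\ms T_I}$ is locally presentable. For the latter, $\ms T_I=\ms T^I$ is small, so by the argument of the modelification proposition $\Mod^{\ms T_I}$ is the category of models of a (small) limit sketch on $\ms T_I$, hence locally presentable. For the former, the restriction functor along $\phi\colon I\to J$ is $\Mod^{\ms T_\phi}$, precomposition with the left extension $\ms T_\phi\colon\ms T_I\to\ms T_J$; the $\mc C$-additivity identity $M\ms T_\phi\tau^I\cong M\tau^J\ms C_\phi$ shows that $\ms T_\phi$ is sketchy, and since $\ms T_I$ is small the reflection of presheaves onto models exists, so the adjoint lifting theorem (as in the subsection ``Adjoints to restriction functors'') produces a left adjoint $\Mod_{\ms T_\phi}$. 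These assemble into the necessarily unique opcleavage, so $\Mod^{\ms T}$ is bifibered over $\mc C$; in particular every restriction functor of $\Mod^{\ms T}$ is a right adjoint between locally presentable categories, hence accessible and limit-preserving.

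Next I would identify the fibers of $y_*\Mod^{\ms T}$. Since $\Mod^{\ms T}$ is a stack over $(\mc C,J)$ and every $P\in\mc E$ is a (small) colimit of representables indexed by its category of elements $\int P$, the fiber $(y_*\Mod^{\ms T})^P$ is the descent category for that covering, i.e.\ a small pseudolimit of the diagram $(\int P)^{op}\to\CAT$, $(I,x)\mapsto\Mod^{\ms T_I}$, whose transition maps are the restrictions $\Mod^{\ms T_\phi}$. A small pseudolimit of locally presentable categories along accessible functors is again locally presentable; together with $\Mod^{\bbar{\ms T}}\cong y_*\Mod^{\ms T}$ this already settles the ``locally presentable fibers'' half of the proposition.

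It remains to promote $y_*\Mod^{\ms T}$ from fibered to bifibered over $\mc E$, i.e.\ to produce a left adjoint to the restriction functor $(y_*\Mod^{\ms T})^Q\to(y_*\Mod^{\ms T})^P$ for each $\phi\colon P\to Q$ in $\mc E$. Both endpoints are now known to be locally presentable, and this functor is precomposition with the reindexing $\int\phi\colon\int P\to\int Q$ of the two pseudolimits; it preserves small limits because those are computed levelwise in the complete categories $\Mod^{\ms T_I}$ and each $\Mod^{\ms T_\phi}$ is continuous, and it is accessible because composing it with the limit projection at $(I,x)\in\int P$ recovers the limit projection of $(y_*\Mod^{\ms T})^Q$ at $(I,\phi\circ x)$, and accessibility of a functor into a pseudolimit of accessible categories is detected componentwise. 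By the adjoint functor theorem for locally presentable categories it therefore has a left adjoint; the resulting opcartesian lifts make $y_*\Mod^{\ms T}$, and hence $\Mod^{\bbar{\ms T}}$, bifibered.

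The real obstacle is the size constraint: $\bbar{\ms T}$ is a large category, so neither the modelification proposition nor Proposition~\ref{leftadjoint} applies to it on the nose, which is precisely why the detour through Lemma~\ref{comparison} and $\Mod^{\bbar{\ms T}}\cong y_*\Mod^{\ms T}$ is forced on us. The remaining effort is the bookkeeping needed to see that the descent construction of the fibers stays within locally presentable categories and that the induced restriction functors stay accessible and limit-preserving, so that the adjoint functor theorem can be applied; all of this is standard once the picture is set up.
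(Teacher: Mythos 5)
Your argument is correct, but it follows a genuinely different route from the paper. The paper never touches the pseudolimit description of $y_*\Mod^{\ms T}$: instead it builds an auxiliary opfibered $\mc E$-category $\ms T'$ with \emph{small} fibers (generated by $\ms F_P=\mc C_{/P}$) together with a fully faithful $Z\colon\ms T'\to\bbar{\ms T}$, uses the comparison-lemma argument of Lemma~\ref{comparison} to see that $\Mod^{Z}$ is an equivalence, and then quotes its own small-sketch results verbatim --- modelification gives locally presentable fibers, and Proposition~\ref{leftadjoint} gives the left adjoints to restriction, hence bifiberedness. You instead route everything through the already-proved equivalence $\Mod^{\bbar{\ms T}}\cong y_*\Mod^{\ms T}$, identify each fiber as a small pseudolimit over $\int P$ of the locally presentable categories $\Mod^{\ms T_I}$ along the right-adjoint restrictions, and then invoke external machinery: the limit theorems for accessible/locally presentable categories (Makkai--Par\'e, Bird) for local presentability of the fibers, and the adjoint functor theorem for locally presentable categories (continuous accessible functors are right adjoints, with accessibility checked componentwise via a uniform index $\kappa$, which exists because $\int Q$ is small) for the opcartesian lifts over $\mc E$. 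Both approaches handle the size problem legitimately; the paper's buys economy of means (everything stays inside its sketch toolkit, at the cost of constructing and verifying the auxiliary theory $\ms T'$), while yours is more direct and makes the fibers concrete as descent categories, at the cost of importing nontrivial 2-categorical limit theorems the paper does not cite. One small precision point: a pseudolimit of locally presentable categories along merely \emph{accessible} functors is only guaranteed to be accessible; you need the transition functors to be continuous (e.g.\ right adjoints) to get completeness and hence local presentability --- you do establish exactly this in your first paragraph, so the argument stands, but the blanket statement as phrased is slightly too strong.
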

\begin{proof}
Let $\ms F$ be the opfibered $\mc E$-category defined by
\[\ms F_P = \mc C_{/P}.\]
Note that this is certainly not a fibered $\mc E$-category. We define an opfibered $\mc E$-category $\ms T'$ and opcartesian $\mc E$-functors $\mc \tau: \ms F \to \ms T'$ and $Z: \ms T' \to \bbar{\ms T}$ so as to obtain a diagram of opcartesian $\mc E$-functors
\[\begin{tikzcd}
\car{rd}{\cong}\ms F \ar{r}{y} \ar{d}{\tau}& y_* \ms E \ar{d}{\bbar \tau} \ar{rd}{F}&\ \\
\ms T' \ar{r}{Z} & \bbar{\ms T} \ar{r}{\io} & \Mod^{\ms T}
\end{tikzcd}\]
as above. By means of this diagram, we deduce that $\Mod^{Z}: \Mod^{\bbar{\ms T}} \to \Mod^{\ms T'}$ is an equivalence. But $\ms T'$ has small fibers. Thus the category of models $\Mod^{\ms T'_I}$ is locally presentable, and each restriction functor $\Mod^{\ms T_\phi'}$ has a left adjoint by Proposition~\ref{leftadjoint}.
\qed\end{proof}

\begin{corollary}
The $\mc E$-category $y_* \Mod^{\ms T}$ has $\mc E$-sums and $\mc E$-products, and its fibers are locally presentable.
\end{corollary}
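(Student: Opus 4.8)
The plan is to deduce the Corollary by transport of structure along the two equivalences established above, $\Mod^{\bbar{\ms T}} \cong y_*\Mod^{\ms T}$ and $\Mod^{Z}\colon \Mod^{\bbar{\ms T}} \cong \Mod^{\ms T'}$. Two of the three assertions are essentially already in hand: the earlier corollary records that $y_*\Mod^{\ms T}$ is a stack over $\mc E$ with $\mc E$-products, and the preceding proposition records that $\Mod^{\bbar{\ms T}}$ is bifibered with locally presentable fibers. Since an equivalence in $\FIB^{\mc E}$ is a fiberwise equivalence compatible with base change, it follows immediately that the fibers of $y_*\Mod^{\ms T}$ are locally presentable and that $y_*\Mod^{\ms T}$ is bifibered. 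Hence the only remaining content is the Beck-Chevalley condition---that for every pullback square in $\mc E$ the canonical comparison between the two composites of base change and $\mc E$-sum is invertible---which, together with bifiberedness, is exactly the assertion that $y_*\Mod^{\ms T}$ has $\mc E$-sums.

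To verify Beck-Chevalley I would work in the equivalent, small-fibered model $\Mod^{\ms T'}$, where $\ms T'$ is the opfibered $\mc E$-category with small fibers constructed in the proof of the preceding proposition, each fiber $\ms T'_P$ being a small Linton theory with $\Sh(\mc C_{/P}) \cong \ms E^P$. Over small fibers the technology of the colimit section applies without change: the fibers $\Mod^{\ms T'_P}$ are locally presentable, the forgetful functors $\Mod^{\ms T'_P} \to \ms E^P$ are conservative and accessible, and each restriction functor $\Mod^{\ms T'_\phi}$ carries the left adjoint $\Mod_{\ms T'_\phi}$ furnished by Proposition~\ref{leftadjoint}, realized as left Kan extension along $\ms T'_\phi$ followed by the modelification reflection. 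Now $y_*\ms E = \bbar{\ms C}$ has $\mc E$-sums because $\mc E$ is a topos, and every base-change functor $\phi^\ast$ on $\ms E$ is a left adjoint, hence preserves all colimits and commutes with the forgetful functors; unwinding the definition of $\Mod_{\ms T'_\phi}$, one checks that the modelification reflections commute with base change up to canonical isomorphism, so that the Beck-Chevalley comparison for $\Mod^{\ms T'}$ is obtained by reflecting the corresponding (invertible) comparison at the level of presheaves and sheaves. Transporting back along $\Mod^{Z}$ then gives $\mc E$-sums in $y_*\Mod^{\ms T}$. Finally I would note that the resulting bifibration agrees, on cartesian arrows, with the $\mc E$-product structure already constructed, by uniqueness of adjoints.

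The step I expect to be the main obstacle is making the commutation of the modelification reflections with the base-change functors fully precise: one must compare the universal property of $\Mod_{\ms T'_\phi}$ against that of the presheaf-level left Kan extension, verify that the induced comparison 2-cells assemble into precisely the canonical Beck-Chevalley transformation, and carry out the whole argument inside genuinely small, locally presentable categories rather than inside the large category $\mc E$. This last point is exactly why the argument must be routed through $\ms T'$ with its small fibers and through Proposition~\ref{leftadjoint}, rather than working directly with the large theory $\bbar{\ms T}$.
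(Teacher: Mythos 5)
Your overall skeleton---transporting everything along $y_*\Mod^{\ms T}\cong\Mod^{\bbar{\ms T}}\cong\Mod^{\ms T'}$, quoting the earlier corollary for $\mc E$-products and the preceding proposition for bifiberedness and local presentability of the fibers---is exactly the paper's route. The divergence, and the gap, is in how you propose to obtain the Beck--Chevalley condition for $\mc E$-sums. Your plan defers the whole issue to the claim that the modelification reflections commute with base change, so that one can ``reflect the corresponding (invertible) comparison at the level of presheaves''; but the presheaf-level comparison is the Beck--Chevalley map for left Kan extensions along the square of small categories built from the left extensions of $\ms T'$, and its invertibility is an exactness condition on that square which is not automatic---it is essentially the same difficulty you are trying to resolve, pushed down one level. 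You yourself flag this as the main obstacle and do not close it, so as written the key step is missing.

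The missing idea is much simpler, and it is what the paper's terse ``it is also bifibered\dots which means that it has $\mc E$-sums'' is relying on (together with the sentence before it). Once you know (a) that the restrictions $\phi^*$ of $y_*\Mod^{\ms T}$ have right adjoints $\phi_*$ satisfying Beck--Chevalley for every pullback in $\mc E$ (this is the content of ``$y_*\Mod^{\ms T}$ has $\mc E$-products'' from the earlier corollary), and (b) that the restrictions also have left adjoints $\phi_!$ (bifiberedness, transported from the preceding proposition), then for any pullback square the canonical map $\ti Y_!\,\ti X^*\Rightarrow X^*\,Y_!$ is the conjugate (mate) of the canonical map $Y^*\,X_*\Rightarrow \ti X_*\,\ti Y^*$ under the adjunctions $\ti Y_!\ti X^*\dashv \ti X_*\ti Y^*$ and $X^*Y_!\dashv Y^*X_*$; since conjugation preserves and reflects invertibility, the left Beck--Chevalley maps are isomorphisms because the right ones are. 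No computation with modelification, Kan extensions, or the small theory $\ms T'$ is needed at this step---the role of $\ms T'$ was already discharged in the preceding proposition, where bifiberedness and local presentability were established. With that replacement your argument collapses to the paper's: transport $\mc E$-products, bifiberedness and locally presentable fibers along the equivalence, and deduce $\mc E$-sums formally by the mate correspondence.
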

\begin{proof}
Indeed, we have an equivalence $y_* \Mod^{\ms T} \cong \Mod^{\bbar {\ms T}}$. But $\Mod^{\bbar{\ms T}}$ has $\mc E$-products, since it is the category of models for a Lawvere $\mc E$-theory. It is also bifibered, as we have just shown, which means that it has $\mc E$-sums. Moreover, its fibers are locally presentable. All of these properties can be transported to $y_* \Mod^{\ms T}$.
\qed\end{proof}

\section{Monoidal structures}
In this section, we show that for a commutative theory, the category of models admits a well-behaved tensor product. The ideas in this section are essentially due to Day~\cite{Day1970,Day1974,Day1972} and Day and Street~\cite{DayStreet1995} (see also~\cite{BastianiEhresmann1972}).
\subsection{Change of variables}
An adjunction $\phi_!\dashv \phi^*: \mc J_1 \rla \mc J_2$ between two categories induces an adjunction $[\phi^*,\mc D]\dashv [\phi_!,\mc D]: [\mc J_1,\mc D] \rla [\mc J_2,\mc D]$ between functor categories, which we may express as a natural isomorphism
\[[\mc J_1,\mc D]([\phi^*,\mc D] F,G) \cong [\mc J_2,\mc D](F, [\phi_!,\mc D]G)\]
or more suggestively by the {\em change of variables formula}
\[\int_{J_1 \in \mc J_1} \mc D^{F\phi^* J_1}{}_{G_{J_1}} \cong \int_{J_2 \in \mc J_2} \mc D^{FJ_2}{}_{G\phi_! J_2},\]
which can also be seen as a special case of the mate correspondence. By separation of variables, we deduce a more general change of variables formula for ends, namely that for $H: \mc J_2^{op} \times \mc J_1 \to \mc D$ we have
\[\int_{J_1 \in \mc J_1} H^{\phi^* J_1}{}_{J_1} \cong \int_{J_2 \in \mc J_2} H^{J_2}{}_{\phi_! J_2}.\]
Similarly, for $H: \mc J_1^{op} \times J_2 \to \mc D$, we have an isomorphism of coends
\[\int^{J_2 \in \mc J_2} H^{\phi_! J_2}{}_{J_2} \cong \int^{J_1 \in \mc J_1} H^{J_1}{}_{\phi^* J_1}.\]
\subsection{Monoidal adjunctions}
In what follows, we will assume that all monoidal categories and functors are {\em symmetric} monoidal. 

We recall Day's {\em reflection theorem}~\cite{Day1972}. For a closed monoidal category $\mc V$ and a reflective subcategory $r\dashv \io:\mc W \rla \mc V$, the adjunction can be improved to a monoidal adjunction if and only if $\mc W$ is an {\em exponential ideal} in $\mc V$. Assuming that $\mc W$ is replete, this means  that $[V, W] \in \mc W$ whenever $W \in \mc W$ and $V \in \mc V$. Under these assumptions, the subcategory $\mc W$ is closed monoidal as well, and $\io$ is a closed functor.

\subsection{Day convolution}
We apply the reflection theorem with $\mc V = (\Psh^{\mc C}, \brot, y(e))$, where $(\mc C,\ot,e)$ is a small monoidal category and the {\em Day convolution product}~\cite{Day1970} of presheaves $P, Q \in \Psh^{\mc C}$ is the presheaf $P\brot Q$ defined by the coend
\[(P \brot Q)^C = \int^{C_1,C_2 \in \mc C} P^{C_1} \times Q^{C_2} \times \mc C^C{}_{C_1\ot C_2}.\]
The internal hom is the presheaf $[P,Q]$ defined by the end
\[[P,Q]^C =\int_{C_1 \in \mc C} [P^{\mc C_1},Q^{\mc C_1\ot \mc C_2}].\]
The Yoneda embedding $y: \mc C \to \Psh^{\mc C}$ is strong monoidal with respect to the given monoidal structure on $\mc C$ and the Day convolution on $\Psh^{\mc C}$. Note that when $\mc C$ is {\em cartesian} monoidal, the same holds for $\Psh^{\mc C}$, so this reduces to the fact that the Yoneda embedding preserves finite (indeed any) products.

\subsection{Monoidal sketches}
Let $(\mc S,\Phi)$ be a small sketch. We say that a monoidal structure $(\ot,e)$ on $\mc S$ is {\em compatible} if the tensor product functors $T\ot -: \mc S \to \mc S$ are all sketchy. In this case we say that $(\mc S, \Phi, \ot)$ is a {\em monoidal sketch}. 

The category $\Mod^{\mc S}$ is an exponential ideal in $\Psh^{\mc S}$.  Indeed, the category of models is closed under limits. For each $S_1$, the presheaf $S \mapsto N^{S_1 \ot S}$ is a model, because $S_1\ot-$ is sketchy and $N$ is a model. Now the Day internal hom $[M,N]$ is the limit, weighted by $M$, of the diagram $S_1 \mapsto N^{S_1 \ot -}$, which is valued in $\Mod^{\mc S}$. Thus $[M,N]$ is a model as well.

By Day's reflection theorem, the reflective inclusion $r\dashv i: \Mod^{\mc S} \into \Psh^{\mc S}$ enriches in an essentially unique way to a monoidal adjunction, and the monoidal category $\Mod^{\mc S}$ thus obtained is closed.

\subsection{Functoriality of Day convolution}
If $G: \mc S_1 \to \mc S_2$ is any oplax monoidal functor, then $\Psh^G: \Psh^{\mc S_2} \to \Psh^{\mc S_1}$ is lax monoidal with respect to the Day convolution structure. The adjunctions $r_i \dashv \io_i: \Mod^{\mc S_i} \dashv \Psh^{\mc S_i}$ both live in the 2-category $\MonCAT_{lax}$. Thus, if $G$ is also sketchy, we obtain a lax monoidal structure on $\Mod^{G} = r_1 \Psh^G \io_2$ as well. Similarly, a monoidal transformation from $G$ to $G'$ induces a corresponding monoidal transformation from $\Mod^{G'}$ to $\Mod^{G}$. If we define $\MonSketch$ to be the 2-category of small monoidal sketches, sketchy oplax monoidal functors, and monoidal transformations, then $\Mod$ can be defined as a 2-functor
\[\Mod: \MonSketch^{coop}\to \MonCAT_{lax},\]
which takes values in the 2-category monoidal categories, lax monoidal functors, and monoidal transformations. Note that even if $G$ is a {\em strong} monoidal functor,  the functor $\Psh^G$ described above is usually only lax.

\subsection{Monoidal $\mc C$-categories}
The category $\FIB^{\mc C}$ has finite products, and we may define a monoidal (fibered) $\mc C$-category to be a pseuomonoid $\ms F$ in the monoidal 2-category $(\FIB^{\mc C},\times, 1)$. This determines 2-categories $\MonFIB_{lax/oplax/strong}^{\mc C}$ of monoidal $\mc C$-categories, lax/oplax/strong monoidal cartesian functors, and monoidal cartesian transformations.

\subsection{The external product}
Shulman~\cite{Shulman2008} has shown that the definition of monoidal $\mc C$-category we have given is equivalent to a somewhat different one. He defines a {\em monoidal fibration} to be a fibred $\mc C$-category  $p: \int \ms F \to \mc C$, such that $p$ is strict monoidal and  each functor $T \ot - $ preserves cartesian arrows. The model case is when $\ms F = \ms C$, so that $\int \ms C$ is the $\mc C$ category of arrows $\cod: \Arr(\mc C) \to \mc C$. For arrows $f_i:X_i \to I_i$, we have an {\em external product} $f_1 \times f_2: X_1 \times X_2 \to I_1 \times I_2$, and the projection satisfies $\cod(f_1 \times f_2) = \cod f_1\times \cod f_2$.

Given an external product $\boxtimes: \int \ms F \times \int \ms F \to \int \ms F$ making $\ms F$ into a monoidal fibration, the {\em internal product} $\ot_I$ on the fiber $\ms F^I$ is obtained by taking
\[A \ot_I B = \ms F^{\lp_I} (A \boxtimes B),\]
where $\lp: I \to I \times I$ is the diagonal. If $\mc C$ is cartesian, then this sets up an equivalence between monoidal fibrations and monoidal $\mc C$-categories. A monoidal functor $F: \ms F \to \ms G$ is then an ordinary monoidal functor, such that the identity $p_{\ms G} F = p_{\ms F}$ is a monoidal transformation.

\subsection{Distributive $\mc C$-categories}
If $\mc V$ is a closed monoidal category, then the functors $V \ot - : \mc V \to \mc V$ are left adjoints, and must preserve colimits. Suppose $\mc C$ is a monoidal category which is not necessarily closed, and $y: \mc C \to \mc E$ is an embedding which preserves colimits in some set $\Phi$. We want to extend the monoidal structure on $\mc C$ to a closed monoidal structure on $\mc E$. It is clear that this will not be possible unless the functors $C \ot - :\mc C \to \mc C$ preserve colimits in $\Phi$ as well. This condition is precisely the compatibility condition on a monoidal sketch. For a fibered $\mc C$-category, we also have colimits which are indexed by arrows in $\mc C$, and for an bifibered sketch we need a compatibility condition with respect to these colimits as well.

Let $\ms T$ be a monoidal $\mc C$-category which is bifibered, and let $\ms T_\phi$ be a left extension functor for $\phi: I \to J$. Given $A, B \in \ms T^I$, the tensor product of the opcartesian arrows $A \to \ms T_\phi A$ and $B \to \ms T_\phi B$ over $\phi$ is an arrow $A \ot_I B \to \ms T_\phi A \ot_J \ms T_\phi B$ over $\phi$. By the universal property of left extensions, we may define a canonical map
\[\ms T_\phi(A \ot_I B) \to \ms T_\phi A \ot_J \ms T_\phi B\]
and similarly a canonical map $\ms T_\phi e_I \to e_J$. These give every left extension a canonical oplax monoidal structure. Given a restriction $\ms T^\phi$, which has a canonical strong monoidal structure, we can define a canonical comparison map
\[\ms T_\phi(\ms T^\phi T' \ot_I T) \to \ms T_\phi \ms T^\phi T' \ot_J \ms T_\phi T \to T' \ot_J \ms T^\phi T,\]
and we say that the projection formula holds if this comparison map is an isomorphism.
\begin{definition}
Let $\ms T$ be a monoidal bifibered $\mc C$-category. We say that $\ms T$ is {\em distributive} if the projection formula
\[ \ms T_{\phi}(\ms T^\phi T' \ot_I T)\cong T' \ot_J \ms T_{\phi} T \]
holds for all $T' \in \ms T^J$, $T \in \ms T^I$ and $\phi: I \to J$.
\end{definition}

\subsection{Structured sketches}
In this section we fix some terminology regarding sketches with extra structure. A $\mc C$-(op/bi)fibered sketch is $\mc C$-(op/bi)fibered category whose fibers are sketches, such that the restrictions (left extensions/both) are all sketchy. A monoidal $\mc C$-sketch is a fibered $\mc C$-sketch whose fibers are monoidal sketches. A distributive $\mc C$-sketch is a monoidal bifibered $\mc C$-sketch which is distributive.

\subsection{Tensor product of models}
Suppose $\ms T$ is a distributive $\mc C$-sketch. For each fiber we have a monoidal structure $(\ot_I, e_I)$ given by the Day convolution. The functors $(\ot_I,e_I)$ are components of cartesian functors $\ot: \Mod^{\ms T} \times \Mod^{\ms T} \to \Mod^{\ms T}$ and $e: 1 \to \Mod^{\ms T}$.

To see this, suppose we are given $\phi: I \to J$ and left extension and restriction functors $\ms T_\phi$ and $\ms T^\phi$. In the adjunction $\ms T_\phi \dashv \ms T^\phi$, the unit and counit are monoidal transformations, where we consider $\ms T_\phi$ and $\ms T^\phi$ with their canonical oplax monoidal structures. It follows that in the induced adjunction $\Mod^{\ms T_\phi} \dashv \Mod^{\ms T^\phi}$, the functors are lax monoidal and the unit and counit are monoidal transformations. By Kelly's doctrinal adjunction theorem~\cite{Kelly1974}, we can conclude that the left adjoint $\Mod^{\ms T_\phi}$ is {\em strong} monoidal. Thus we have provided a canonical strong monoidal structure for every restriction. Moreover, the pseudofunctoriality constraints of $\Mod^{\ms T}$ are induced by the pseudofunctoriality constraints of an opcleavage for $\ms T$ and thus inherit the monoidal coherence.

In fact, each restriction $\phi^* = \Mod^{\ms T,\phi}$ is actually a {\em closed} functor. For $\phi: I \to J$ and $M,N \in \Mod^{\ms T, I}$, we have 
\begin{align*}
(\phi^*[M,N]_I)^T & = [M,N]_I^{\phi _!T} \\
&= \int_{T' \in \ms T^I} [M^{T'},N^{T'\ot_J \phi_! T}] \\
&\cong \int_{T' \in \ms T^I} [M^{T'},N^{\phi_!(\phi^*T'\ot T)}] 
\end{align*}
by the projection formula. But then by change of variables this is canonically isomorphic to
\[\int_{T' \in \ms T^J} [M^{\phi_! T'}, N^{\phi_!(T' \ot_I T)}]=[\phi^* M, \phi^* N]_J^T.\]

\subsection{Commutative theories}
Recall that if $\mc C$ has finite limits, then  $\ms C=\Arr(\mc C)$ is a cartesian distributive $\mc C$-category. We say that a (standard $\mc C$-ary) theory $\tau: \ms C \to \ms T$ is {\em commutative} if $\tau$ enriches to a strict monoidal $\mc C$-functor, where the monoidal structure on $\ms C$ is cartesian. The monoidal $\mc C$-category $\ms T$ inherits distributivity  from $\mc C$.
\begin{lemma}
\label{tenissketchy}
Let $\tau: \ms C \to \ms T$ be a commutative theory. Then $\ms T$ is a distributive $\mc C$-category. For $f: I \to J$ in $\ms C^J$ and $T \in \ms T^J$, we have a natural isomorphism
\[\tau^J f \ot_J T \cong \ms T_{f} \ms T^f T.\]
In particular, each functor $T \ot_J -: \ms T^J \to \ms T^{J}$ is sketchy, and $\ms T$ is a distributive $\mc C$-sketch.
\end{lemma}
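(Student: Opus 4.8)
The plan is to deduce the whole statement from the already-recalled fact that $\ms C=\Arr(\mc C)$ is a cartesian distributive $\mc C$-category, together with the four structural properties of a commutative theory $\tau$: it is strict monoidal, $\mc C$-additive, $\mc C$-cartesian (i.e.\ fibered), and the identity on objects. A preliminary observation, used throughout, is that each component $\tau^I\colon\ms C^I\to\ms T^I$ is strong --- indeed strict --- monoidal for the internal products: since $\tau$ preserves $\boxtimes$ and, being fibered, commutes with the diagonal restriction $\ms C^{\lp_I}$, we have $\tau^I(A\ot_I B)\cong\tau^I A\ot_I\tau^I B$ and $\tau^I e_I\cong e_I$.

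\emph{Distributivity.} Since $\tau$ is $\mc C$-additive it commutes with the left extensions $\ms C_\phi$, and being fibered it commutes with the restrictions $\ms C^\phi$; by the calculus of mates it therefore preserves the unit and counit of each adjunction $\ms T_\phi\dashv\ms T^\phi$. The projection-formula comparison map is assembled from the canonical oplax structure on a left extension, one such counit, and the tensor of two arrows, each of which $\tau$ preserves; using the identity-on-objects property to identify sources and targets, it follows that $\tau^J$ carries the $\ms C$-comparison map at a pair $(T',T)$ to the $\ms T$-comparison map at $(T',T)$. The former is invertible because $\ms C$ is distributive, and $\tau^J$ is a functor, so the latter is invertible; hence $\ms T$ is a distributive $\mc C$-category. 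This transport claim is the only step that is not purely formal --- concretely, it reduces to checking that $\tau$, as a strict monoidal $\mc C$-additive fibered functor, preserves the counits $\ms T_\phi\ms T^\phi\to\id$, which is a routine mate computation; everything else below is a short calculation.

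\emph{The isomorphism.} Specialize the projection formula $\ms T_f(\ms T^f T'\ot_I S)\cong T'\ot_J\ms T_f S$ to $T':=T$ and $S:=e_I$, the unit of $\ms T^I$. By the right unitor the left-hand side is $\ms T_f(\ms T^f T\ot_I e_I)\cong\ms T_f\ms T^f T$, while the right-hand side is $T\ot_J\ms T_f e_I$. Now $e_I=\tau^I(1_I)$, where $1_I=\id_I$ is the terminal object of $\ms C^I=\mc C_{/I}$, and $\ms C_f(\id_I)=f$ as an object of $\mc C_{/J}$; so $\mc C$-additivity gives $\ms T_f e_I\cong\tau^J f$, and symmetry of $\ot_J$ gives $T\ot_J\tau^J f\cong\tau^J f\ot_J T$. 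All the isomorphisms used are natural in $T$, which yields the stated natural isomorphism $\tau^J f\ot_J T\cong\ms T_f\ms T^f T$.

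\emph{Sketchiness.} Fix $T\in\ms T^J$ and, using the identity-on-objects property, write $T=\tau^J g$ with $g\in\ms C^J$. For a Linton model $M$ of $\ms T^J$, i.e.\ one with $M\tau^J\in\Sh(\ms C^J)$, strong monoidality of $\tau^J$ gives $M(T\ot_J-)\tau^J\cong M\tau^J(g\ot_J-)$; and in the codomain fibration $g\ot_J-\cong\ms C_g\ms C^g$, so this presheaf is obtained from the sheaf $M\tau^J$ by precomposing with the left extension $\ms C_g$ and the restriction $\ms C^g$. As was already used in the proof of the comparison theorem, precomposition with restrictions and left extensions of $\ms C$ preserves sheaves, so $M(T\ot_J-)\tau^J$ is a sheaf and $M(T\ot_J-)$ is again a Linton model; thus $T\ot_J-$ is sketchy. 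Since $T$ was arbitrary, each fiber $\ms T^I$ is a monoidal sketch; combined with distributivity and with the fact --- established for standard theories --- that the restriction and left-extension functors of $\ms T$ are sketchy, this exhibits $\ms T$ as a distributive $\mc C$-sketch.
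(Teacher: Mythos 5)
Your proof is correct and takes essentially the same route as the paper: distributivity is obtained by transporting the projection formula for $\ms C$ along the strict monoidal, bicartesian, identity-on-objects $\tau$, and the isomorphism $\tau^J f \ot_J T \cong \ms T_f \ms T^f T$ comes from the projection formula specialized at the unit $e_I = \tau^I(\id_I)$ together with $\mc C$-additivity. Your sketchiness step differs only cosmetically: you reduce directly to precomposition of the sheaf $M\tau^J$ with $\ms C_g$ and $\ms C^g$, whereas the paper invokes sketchiness of $\ms T_f$ and $\ms T^f$, which was itself established by exactly that reduction.
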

\begin{proof}
We need to check that the projection formula holds. Each component of $\tau$ is identity on objects, and we need to show that for each $\phi: I \to J$ in $\mc C$, $E' \in \mc C_{/J}$ and $E \in \mc C_{/I}$, the canonical map
\[\ms T_\phi(\ms T^\phi \tau^J E' \ot_I \tau^I E) \to \tau^J E' \ot_J \ms T_\phi \tau^I E\]
is an isomorphism. But since $\tau$ is strict monoidal and bicartesian, this essentially reduces to the projection formula for $\ms C$, which is standard.

A consequence of the projection formula is that
\begin{align*}
\tau^Jf \ot_J T & \cong \tau^J(\ms T_{f} \id_I) \ot_J T\\
&\cong \ms T_{f}(\tau^I \id_I) \ot_J T\\
&\cong \ms T_{f}(\tau^I \id_I \ot_I \ms T^f T)\\
&\cong \ms T_{f} \ms T^f T,
\end{align*}
again since $\tau$ is bicartesian and strict monoidal.

In particular, the functors $T \ot_J -$ are sketchy because $\ms T_f$ and $\ms T^f$ are sketchy, and every $T$ is of the form $\tau^J f$ for some $f$.
\qed\end{proof}
From this lemma we see that the monoidal structure of $\ms T$, if it exists, is determined by $\tau$. Thus the commutativity assumption essentially means that operations commute.

\subsection{Free models}
Let $\tau: \ms C \to \ms T$ be a commutative Lawvere theory. The free model functor $\Mod_\tau$ is obtained by the composition
\[\Sh^{\ms C_I} \rra{\io} \Psh^{\ms C_I} \rra{\Psh_{\tau_I}} \Psh^{\ms T_I} \rra{r} \Mod^{\ms T_I}.\]
The Day convolution on $\Psh^{\ms C_I}$ is easily seen to be the cartesian product. Since the cartesian product is preserved by sheafification, the resulting monoidal structure on $\Sh^{\ms C_I}$ is {\em also} the cartesian product. Since the inclusion of sheaves into presheaves is a right adjoint, it preserves the cartesian product, and thus the first functor in this sequence is strong monoidal.

Since $\tau_I$ is strong monoidal, the left Kan extension $\Psh_{\tau_I}$ is strong monoidal with respect to the Day convolution~\cite{DayStreet1995}. Finally, the reflection $r$ sending presheaves into models is strong monoidal, because that is how we defined the monoidal structure on models. Thus the free model functor $\Mod_{\tau_I}$ is strong monoidal.

The reader can verify that each functor in this sequence is the component of a strong monoidal $\mc C$-functor.

\subsection{Extension to $\mc E$}
Let $\tau: \ms C \to \ms T$ be a commutative Lawvere theory. We have shown that $\ms T$ is a sketchy distributive fibration, and therefore gives rise to a monoidal $\mc C$-cosmos $\Mod^{\ms T}$. We know that $y_*\Mod^{\ms T}$ is the category of models for a large theory $\bbar{\tau}: \ms E \to \bbar{\ms T}$, and $\bbar \tau$ can be made strict monoidal using the fact that the free model functor is strong monoidal. Formally, it follows that $\Mod^{\bbar{\ms T}}$ admits an essentially unique closed monoidal structure extending that of $\Mod^{\ms T}$.

To avoid size issues and excessive abstraction, we can simply describe this structure explicitly. The objects of $y_* \Mod^{\ms T,P}$ are cartesian functors (pseudonatural transformations) from $P$ to $\Mod^{\ms T}$. Given $M,N \in y_* \Mod^{\ms T,P}$, the tensor product is defined pointwise for $f \in P^I$ by
\[(M \ot_P N)(f) = M(f) \ot_I N(f),\]
and the internal hom by
\[[M,N]_P(f) = [M(f),N(f)]_I.\]
These expressions are pseudonatural in $f$ because $M$ and $N$ are pseudonatural in $f$ and the restrictions in $\Mod^{\ms T}$ are strong monoidal and closed. The unit and counit for the adjunction $M\ot_P - \dashv [M,-]$ have as components the unit 
\[\ins_{M(f)}: N(f) \to [M(f),M(f)\ot_I N(f)]_I \]
and counit
\[\ev_{M(f)}: M(f) \ot_I [M(f),N(f)]_I \to N(f)\]
of the adjunction $M(f) \ot_I-\dashv [M(f),-]_I$. 

We recall the notion of an $\mc E$-cosmos.
\begin{definition}[\cite{Shulman2013}]
An $\mc E$-cosmos is a monoidal $\mc E$-category $\ms V$ such that 
\begin{enumerate}
\item The $\mc E$-category $\ms V$ has $\mc E$-sums and $\mc E$-products.
\item The fibers of $\ms V$ have small limits and colimits.
\item The fibers of $\ms V$ are closed.
\item The restriction functors closed.
\end{enumerate}
\end{definition}
\begin{theorem}
Let $\tau: \ms C \to \ms T$ be a commutative Lawvere theory. Then the extension to $\mc E$ of the category of models is an $\mc E$-cosmos.
\end{theorem}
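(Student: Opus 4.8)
The plan is to verify, for $\ms V = y_*\Mod^{\ms T}$, the four clauses in the definition of an $\mc E$-cosmos, assembling ingredients that are already in place. By Lemma~\ref{tenissketchy}, commutativity of $\tau$ makes $\ms T$ a distributive $\mc C$-sketch; hence, as explained in the subsections on the tensor product of models and on free models, each fiber $\Mod^{\ms T,I}$ carries a closed symmetric monoidal structure (Day convolution together with Day's reflection theorem), and every restriction functor $\phi^*=\Mod^{\ms T,\phi}$ for $\phi$ in $\mc C$ is a closed monoidal functor. On the other hand, the corollary at the end of the colimit-constructions section shows that $y_*\Mod^{\ms T}$ has $\mc E$-sums and $\mc E$-products and that its fibers are locally presentable. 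Clauses (1) and (2) are then immediate: $\mc E$-sums and $\mc E$-products are exactly the content of that corollary, and a locally presentable category has all small limits and colimits, so the fibers of $y_*\Mod^{\ms T}$ do.

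For clauses (3) and (4) I would work directly with the explicit description of the $\mc E$-monoidal structure recorded in the subsection ``Extension to $\mc E$''. An object of $(y_*\Mod^{\ms T})^P$ is a cartesian functor $M\colon P \to \Mod^{\ms T}$, and for $f\in P^I$ one has $(M\ot_P N)(f)=M(f)\ot_I N(f)$ and $[M,N]_P(f)=[M(f),N(f)]_I$. The fiberwise adjunction $M\ot_P-\dashv[M,-]_P$ is assembled componentwise from the adjunctions $M(f)\ot_I-\dashv[M(f),-]_I$, which exist precisely because each $\Mod^{\ms T,I}$ is closed; pseudonaturality of its unit and counit follows from pseudonaturality of $M,N$ together with the fact, noted in the tensor-product subsection, that the restriction functors of $\Mod^{\ms T}$ are strong monoidal and closed. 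This gives clause (3). For clause (4), a restriction functor of $y_*\Mod^{\ms T}$ along $\phi\colon P\to Q$ in $\mc E$ is precomposition, $(\phi^*M)(f)=M(\phi\cp f)$ for $f\in P^I$; since the internal hom of $y_*\Mod^{\ms T}$ is computed pointwise, the canonical comparison $\phi^*[M,N]_Q\to[\phi^*M,\phi^*N]_P$ has $f$-component the identity of $[M(\phi\cp f),N(\phi\cp f)]_I$, hence is an isomorphism. (Alternatively, one may invoke $y_*\Mod^{\ms T}\cong\Mod^{\bbar{\ms T}}$ for the large commutative $\mc E$-theory $\bbar\tau$, observe that $\bbar{\ms T}$ is a distributive $\mc E$-sketch, and rerun the fiberwise arguments one level up; but that route reintroduces the size issues handled by the comparison lemma, so the pointwise computation is cleaner.)

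The proof is thus essentially an assembly of previously established facts, and I do not expect a genuine obstacle here. The only point requiring care is the bookkeeping in clauses (3) and (4): confirming that the pointwise formulas define fibered $\mc E$-functors $\ot\colon y_*\Mod^{\ms T}\times_{\mc E} y_*\Mod^{\ms T}\to y_*\Mod^{\ms T}$ and $[-,-]$ with coherently pseudonatural structure maps, and that the closedness comparison for restriction holds along \emph{arbitrary} maps in $\mc E$, not merely those in $\mc C$. Both facts were already set up in the ``Extension to $\mc E$'' subsection, so it remains only to string them together against the definition.
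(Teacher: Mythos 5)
Your proposal is correct and follows essentially the same route as the paper: the paper's proof likewise cites the cocompleteness/completeness results valid for any standard theory (the corollary giving $\mc E$-sums, $\mc E$-products and locally presentable fibers) and the pointwise closed monoidal structure from the ``Extension to $\mc E$'' subsection, declaring the remaining verification straightforward. You have simply written out that straightforward check (the componentwise adjunction and the closedness of restriction along arbitrary maps of $\mc E$), which is consistent with the paper's argument.
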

\begin{proof}
The completeness and cocompleteness properties are true for any Lawvere theory. We have defined a closed monoidal structure on the fibers of $y_* \Mod^{\ms T}$, and it is straightforward to check that this gives $\ms V$ the structure of a monoidal $\mc E$-cosmos.
\qed\end{proof}

\section{Examples}
\subsection{Finitary algebraic theories}
Let $\mc C = \aln$ be a skeletal category of finite sets and functions. We can define a topology on $\mc C$ whose cylinders correspond to all finite sums $m = m_1 +\dotsb + m_k$. We then have $\mc E = \Set$. Every ordinary small category $\mc T$ corresponds to a small fibered $\mc C$-category $\ms T$, where $\ms T^n = [n,\mc T]$. A cartesian functor $\tau: \ms C \to \ms E$ is bicartesian precisely when $\tau^1: \mc C \to \mc T$ preserves finite sums. Thus we recover the usual notion of a Lawvere theory. 

If $\mc D$ is an ordinary category with finite products, then a multiplicative functor $M: \ms T^{op} \to \ms D$ is the same as an functor $M: \mc T^{op} \to \mc D$ preserving finite products in the ordinary sense. In particular, if $I$ is any set, then the category $\Mod^{\ms T, I}$ is equivalent to the category of models of the ordinary Lawvere theory $\mc T$ in $\Set^I$.

\subsection{Spaces with Lebesgue integration}
Our original motivation was to define a category of linear spaces which have a well-behaved notion of Lebesgue integral. One would imagine that if $V$ is a reasonable linear space and $c: \R \to V$ is a bounded measurable function, then for any finite measure $\mu$, there is a well-defined integral
\[\int c(x)\,d\mu(x) \in V.\]
Unless  $V$ is a separable Banach space, there does not seem to be a reasonable abstract condition on $c$ which ensures that the integral exists. For example, it is not enough, in general, for $c$ to be weakly measurable.

Intuitively, the space of measures on a Borel space $X$ is the free Banach space generated by $X$. However, there are too many linear maps in the category of Banach spaces for this to be true. For example, let $M(\R)$ be the space of Radon measures on $\R$. This is a Banach space with respect to the total variation norm. Let
\[\d: \R \to M(\R)\]
be the Dirac function, sending $x \in \R$ to the Dirac mass at $x$. We will see that $\d$ is {\em not weakly measurable} with respect to the total variation norm. This is connected to the failure of $\sli(\R,\R)$ to be the dual of $M(\R)$. In symbols, there is a mismatch
\[\Ban(M(\R),\R) \neq \sli(\R,\R)\]
between the space of bounded linear functionals on $M(\R)$ and the space of bounded linear maps $\R \to \R$. This can be contrasted with the coincidence
\[\Ban(M_c(\R),\R) = \ell^\infty(\R,\R),\]
where $M_c(\R)$ is the space of {\em countably-supported} finite measures on $\R$ and $\ell^\infty(\R,\R)$ is the space of not-necessarily-measurable bounded maps from $\R$ to $\R$. These non-measurable functions are a source for some of the bad linear functionals on $M(\R)$.

This situation may be rectified by considering instead the locally convex space $M_w(\R)$ which coincides as a vector space with $M(\R)$ and has the topological dual
\[\Lcvx(M_w(\R),\R) = \sli(\R,\R).\]
However, it is readily apparent that the bounded sets in $M_w(\R)$ are the same as the bounded sets in $M(\R)$, so we are now in the pathological situation where the bounded functionals and the continuous functionals do not coincide.

Our idea is to avoid all of these topological and measure-theoretic complications by taking the collection of bounded measurable curves $c: \R \to V$ as part of the intrinsic data of a linear space $V$. More generally, we will have a related notion of a {\em nonlinear} space equipped with curves which we imagine to be bounded and measurable. A bounded measurable map between spaces $X$ and $Y$ will then be a map $f: X \to Y$ which preserves boundedness and measurability of curves, but not necessarily any other structure. Our categories $\Lin$ and $\Space$ of linear and nonlinear spaces are designed with the express purpose of having (by definition) an isomorphism
\[\Lin(M(\R),V) \cong \Space(\R,V)\]
for any linear space $V$. This allows us to conceptualize the space of Radon measures as the ``free linear space'' generated by the points of $\R$. Moreover, we will be able to replace $\R$ by any nonlinear space $X$ and obtain a free linear space $X$ such that
\[\Lin(M(X),V) \cong \Space(X,V).\]
We will not use any topological or analytic miracles besides for the existence and good properties of Lebesgue measure on the real line. That is, we will take the theory of ``Lebesgue integration'' as a wholesale replacement for the theory of vector spaces. 

\subsection{Lextensive categories}
Let $\mc C$ be any category with finite limits and $\k$-ary disjoint sums which are stable under pullback. Then there is an $\k$-extensive topology $\Phi_{\k}$ on $\mc C$ generated by the $\k$-ary sums $\sum_{i \in \ld} I_i$, with $I_i \in \mc C$ and $\ld < \k$. If $\tau: \ms C \to \ms T$ is a Lawvere theory, then a model of $\ms T^I$ is precisely a functor $M: (\ms T^I)^{op} \to \Set$ preserving $\k$-ary products. 

In our main application, we take $\mc C$ to be the category of standard Borel spaces (see~\cite{Kuratowski1948,Mackey1957}) For example, let $\Borel$ be the category of standard Borel spaces, whose objects are
\[\Borel = \{0,1,2,\dotsc,\N, \R\},\]
each considered as a measurable space with the $\si$-algebra of Borel sets. The maps are the Borel-measurable functions. This category has countable limits and countable sums which are disjoint and stable under pullback, so it is a countable lextensive category.
 
If we consider $\Borel$ to be a {\em finitary} extensive site, then sheaves on $\Borel$ include interesting spaces like $\R_b$, where
\[\R_b^X = \sli(X, \R)\]
is the set of bounded measurable functions from $X$ to $\R$. It is clear that the presheaf $\R_b$ preserves finite products but not countable products.

\subsection{Integral kernels}
Let $Y$ be a standard Borel space, and let $M(Y)$ be the space of signed Radon measures on $Y$. Recall that a function $k: X \to M(Y)$ is a {\em measurable kernel} if, for each Borel set $E$, the map $x \mapsto k(x,E)$ is Borel. By standard results on measurable kernels (see, e.g.~\cite[Chapter 1, Lemma 1.7]{Dynkin1961}), the composition of kernels is as well-behaved as one might expect. We can thus define a fibered category $\FreeLin$ of free linear spaces, whose objects are measurable bundles of free linear spaces generated by the standard Borel spaces.

We first define the fibers of $\FreeLin$. If $f_i: X_i \to I$ are Borel spaces over $I$, then a bundle map 
\[k: X_1 \to MX_2\]
is a measurable kernel $k: X_1 \to MX_2$, such that $\sup_{x_1} \n{ k(x_1)} < \infty$ and
\[ k(x_1,x_2) = 1_{X_1\times_I X_2}(x_1,x_2)  k(x_1,x_2).\]
That is, the kernel $ k$ is supported on the Borel set 
\[X_1 \times_I X_2 = \{(x_1,x_2) \in X_1 \times X_2: f_1(x_1) = f_2(x_2)\}.\]
We can compose kernels $k: X_1 \to MX_2$ and $k': X_2 \to MX_3$ by the rule
\[ (k'\cp k)(x_1,x_3) = \int_{X_2}  k(x_1, x_2)\, k'(x_2,x_3)\,dx_2,\]
and the identity for this composition is the Dirac kernel $\d: X \to MX$ sending $x$ to $\d_x$.

It is easy to see that if $k'$ and $k$ are bundle maps, then the composition $ k'\cp k$ is a bundle map as well. We define $\FreeLin_I$ to be the category whose objects are Borel spaces over $I$ and whose arrows are bundle maps.

More generally, if $f_i: X_i \to I_i$ are bundles of Borel spaces, and $\phi: I_1 \to I_2$ is any Borel map, then say that $k: X_1 \to MX_2$ is a bundle map over $\phi$ if it is a bundle map with respect to the maps to $I_2$. We will notate this situation by
\[\begin{tikzcd}
X_1 \ar{r}{k}\armd & MX_2\armd\\
I_1\ar{r}{\phi}&I_2.
\end{tikzcd}
\]
Thus we obtain a split opfibration over $\Borel$. For each $\phi: I \to J$ and $f: X \to I$ we have a canonical opcartesian arrow $\d: X \to MX$  over $\phi$.

\subsection{Pushforward of measures}
If $h: X \to Y$ is a Borel map, then family of Dirac masses
\[M(h)(x,y) = \d(h(x),y)\]
is a measurable kernel $M(h): X \to M(Y)$. Thus $M$ defines a functor
\[M: \Borel \to \FreeLin_1,\]
Suppose now that 
\[\begin{tikzcd}
X \ar{r}{h}\ar{d}{f} & Y\ar{d}{g}\\
I\ar{r}{\phi} & J
\end{tikzcd}\]
is a commutative diagram of Borel maps. One can check that $M(h): X \to M(Y)$ is a bundle map over $\phi$. Thus $M$ extends to a $\Borel$-functor $\tau: \Arr(\Borel) \to \FreeLin$. The functor $\tau$ preserves opcartesian arrows simply because the functor $M$ preserves identities.

We claim that $\tau$ preserves cartesian arrows as well. Suppose given a cartesian diagram 
\[\begin{tikzcd}
X\times_I J \ar{d}{p_J}\ar{r}{p_X} & X\ar{d}{g}\\
J\ar{r}{\phi} & I
\end{tikzcd}\]
of Borel spaces, and let 
\[\begin{tikzcd}
Z \ar{r}{k}\armd&\armd M X\\
J \ar{r}{\phi}& I
\end{tikzcd}\]
be any bundle map over $\phi$ for some Borel map $f: Z \to J$. We need to check that $k$ factors uniquely through the bundle map 
\[\begin{tikzcd}
X \times_I J \ar{r}{M p_X}\armd&M X\armd\\
J  \ar{r}{\phi}&I.
\end{tikzcd}\]
Suppose that $\ti k: Z \to M(X\times_I J)$ is a bundle map. Then, for each $z \in Z$, the measure $\ti k(z)$ is supported on the set $\{(x,j) \in X\times_I J: g(z) = j\}$, and must be of the form

\[\ti k(z,x) = a(z,x) \ot \d(g(z), j),\]
where $a(z,x)$ is a kernel determined by 
\[a(z,x) = \int_J \ti k(z,x,j) \,dj = Mp_X \cp \ti k(z,x).\]
This shows that $\ti k$ is uniquely determined by the factorization $k = Mp_X \cp \ti k$. Now, given $k: Z \to MX$, define the kernel $\ti k: Z \to M(X \times_I J)$ by
\[\ti k(z,x, j) = k(z,x) \ot_I \d(f(z), j),\]
where  for measures $\mu \in M(X)$ and $\nu \in M(J)$, the notation
\[d\mu(x)\ot_I d\nu(y) = \eval{d(\mu\times \nu)(x,y)}_{X \times_I J}\]
indicates the restriction to $X\times_I J$ of the product measure. Then we can compute
\begin{align*}
(Mp_X \cp \ti k)(z,x) & = \int_{X\times_I J} k(z,x') \ot\d(f(z),j')\,\d(x',x)\,dx'\,dj'\\
&=\int_{X \times J} 1_{\phi(j') = g(x')} 1_{j' = f(z)} k(z,x')\ot \d(f(z),j') \d(x',x)\,dx'\,dj'\\
&=\int_{X\times J} k(z,x') \ot \d(f(z),j') \d(x',x)\,dx'\,dj'
\end{align*}
because $k$ is a bundle map and satisfies the identity
\[1_{\phi(j') = g(x')} 1_{j' = f(z)} k(z,x') = 1_{\phi(f(z))=g(x')} k(z,x') = k(z,x').\]
But the last integral is just $k(z,x)$, so $k$ factors as $k = M p_X \cp \ti k$, as desired.

We have shown that $\tau$ preserves precartesian arrows. Because $\tau$ is identity on objects, it follows that there are enough precartesian arrows that $\FreeLin$ is a fibered $\Borel$-category. Moreover, the functor $\tau$ is bicartesian.

\subsection{Tensor product of measures}
We have shown that the identity-on-objects functor $\tau: \Arr(\Borel) \to \FreeLin$ is bicartesian. For each $g: Y \to I$, one checks that the functor sending a space $f: X \to I$ in $\Borel_{/I}$ to the collection of bundle maps $k: X \to MY$ is a sheaf (for the finitary extensive topology) and thus $\tau$ is a Lawvere theory. 

The theory $\tau$ is moreover {\em commutative} in the sense that the underlying functor $\tau: \Arr(\Borel) \to \int \FreeLin$ is strict monoidal. That is, for spaces $f_i: X_i \to I_i$ and $g_i: Y_i \to J_i$ and bundle maps $k_i:X_i \to MY_i$ over maps $\phi_i: I_i \to J_i$, we can extend (any given) cartesian tensor product on the arrow category $\Arr(\Borel)$ to a tensor product
\[\begin{tikzcd}[column sep=large]
X_1 \times X_2 \armd\ar{r}{k_1 \ot k_2} & M(Y_1\times Y_2)\armd\\
I_1 \times I_2 \ar{r}{\phi_1 \times \phi_2}&J_1\times J_2
\end{tikzcd}\]
on the total category of $\FreeLin$. The monoidal structure is defined in terms of product measures. For each $(x_1,x_2) \in X_1\times X_2$, we let
\[k_1\ot k_2(x_1,x_2,y_1,y_2) = k_1(x_1,y_1) \ot k_2(x_2,y_2),\]
which defines a measure on $Y_1 \times Y_2$ in the usual way. The kernel thus defined is a bundle map over $\phi_1\times \phi_2$. If $k_i(x_i,y_i) = \d(h_i(x_i), y_i)$ for Borel maps $h_i: X_i \to Y_i$ over the $\phi_i$, then 
\begin{align*}
k_1\ot k_2(x_1,x_2,y_1,y_2) & = \d(h_1(x_1),y_1) \ot \d(h_2(x_2),y_2)\\
&=\d((h_1\times h_2)(x_1,x_2), (y_1,y_2)).
\end{align*}
This shows that the functor $\tau$ is strict monoidal, so the corresponding Lawvere theory is commutative by our definition.

\subsection{Concrete spaces}
A sheaf $F$ over the extensive site $\aleph_0$ is  determined up to isomorphism by its underlying set, because we have  isomorphisms $F(n)\cong F(1)^n$, natural in $n$, and the naturality condition then determines $F(f)$ for any $f$ in $\aln(n,m)$. 

For a sheaf over $\Borel$, on the other hand, we have a canonical map
\[F(X) \to F(1)^{\a{X}},\]
where $\a{X}$ is the set of points $p: 1 \to X$. However, this map will almost never be an isomorphism. On the other hand, it may happen that it is at least injective for each $X$, and in this case we can view the sheaf $F$ as a set $F(1)$ together with a choice, for each $\a{X}$, of a set of admissible curves $c: \a{X} \to F(1)$. A map of spaces $f: F \to G$ is then a map $f_1: F(1) \to G(1)$ between the underlying sets which that sends admissible curves in $F$ to admissible curves in $G$. We will call such a space a concrete space.

Similarly, given a Lawvere theory $\tau: \Arr(\Borel) \to \FreeLin$, we will say that a model $M$ of $\tau$ is concrete when the underlying sheaf $M\tau$ is concrete.

\section{Some linear spaces}
In what follows we define
\[\Space = \Sh(\Borel)\]
to be the topos of sheaves on the finitary extensive Borel site, and
\[\Lin = \Mod(\FreeLin)\]
to be the category of models for the theory of linear spaces defined above.

\subsection{Separable Banach spaces}
In a separable Banach space $V$, there is a well-behaved notion of measurability. If $X$ is a standard Borel space, a function $f: X \to V$ is said to be measurable precisely when $l\cp f: X \to \R$ is measurable for every bounded linear functional $l: V\to\R$. If in addition $f$ is bounded in norm, the Bochner integral
\[\int f(x) \,d\mu(x)\]
exists for any finite measure $\mu$ and is determined by the requirement that 
\[l \sps{\int f(x) \,d\mu(x)}= \int l(f(x))\,d\mu(x)\]
for every $l$ in the Banach space dual $V'$ of $V$. Let $\ti V(X)$ denote the set of measurable bounded functions from $X$ to $V$. If $k:Y \to MX$ is a kernel, then we define an action $\ti V(k): \ti V(X) \to \Set(Y,V)$ by the Bochner integral
\[\ti V(k)[f] (y) = \int f(x)\,k(y,x) \,dx.\]
The function $\ti V(k)[f]$ is a measurable function of $Y$ because for $l \in V'$ we have
\[l(\ti V(k)[f](y)) = \int l(f(x)) \,k(y,x)\,dx,\]
and this is a composition of measurable kernels because $x \mapsto l(f(x))$ is measurable. The function $\ti V(k)f$ is also bounded by the triangle inequality for the Bochner integral. Thus we have a map 
\[\ti V(k): \ti V(X) \to\ti V(Y),\]
and it is easy to check that the assignment $X\mapsto \ti V(X)$ defines a product-preserving functor from the category $\FreeLin_1^{op}$ to $\Set$. 

Now suppose that $T: V \to W$ is an bounded linear map between two separable Banach spaces. Then composition with $T$ clearly preserves the measurable curves, and we define
\[\ti T: \ti V \to \ti W\]
to be composition with $T$. This $\ti T$ is a natural transformation, because Bochner integration commutes with bounded linear maps. Thus we obtain a functor 
\[B: \SepBan \to \Lin\]
on the category $\SepBan$ of separable Banach spaces, sending each space  $V$ to the linear space $\ti V$ defined above.

\begin{proposition}
The functor
\[\ti{(-)}: \SepBan \to \Lin\]
sending a separable Banach space $V$ to the linear space
\[\ti V(X) = \{c: X \to V\mid \text{$c$ is measurable and } \sup_x \n{c(x)} <\infty\}\]
is full and faithful.
\end{proposition}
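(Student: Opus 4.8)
The plan is to treat faithfulness and fullness separately. Throughout I use that a morphism $\Phi\colon \ti V\to\ti W$ in $\Lin=\Mod(\FreeLin)$ is a morphism of models, i.e.\ a natural transformation between the product-preserving functors $\ti V,\ti W\colon\FreeLin_1^{op}\to\Set$; in particular it has a component $\Phi_X\colon\ti V(X)\to\ti W(X)$ for every Borel space $X$, natural with respect to all bundle maps. Faithfulness is then immediate: under the identification $\ti V(1)=V$, the component at the one-point space of the morphism $\ti T$ induced by a bounded linear map $T\colon V\to W$ is $T$ itself, since $\ti T$ is postcomposition with $T$; hence $\ti S=\ti T$ forces $S=T$.

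For fullness, fix $\Phi\colon\ti V\to\ti W$ and set $T:=\Phi_1\colon V\to W$. I would first record a pointwise identity: for a Borel space $X$, a point $p\colon 1\to X$, and $c\in\ti V(X)$, naturality of $\Phi$ with respect to the Dirac kernel $M(p)\colon 1\to MX$ --- which acts on both $\ti V$ and $\ti W$ by evaluation at $p$ --- yields $(\Phi_X c)(p)=T(c(p))$. Next, $T$ is $\R$-linear, because $\Phi$ commutes with all operations of the theory and the operations supported on finite arities are exactly the finite $\R$-linear combinations (integration against finitely supported signed measures on $\{1,\dots,n\}$); naturality of $\Phi$ against these kernels, together with the pointwise identity, which identifies $\Phi_n\colon V^n\to W^n$ with the $n$-fold product of $T$, shows that $T$ preserves finite $\R$-linear combinations.

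The crux, and the only step that is not a formal diagram chase, is that $T$ is norm-bounded. Suppose it is not; choose $v_n\in V$ with $\n{v_n}\le 1$ and $\n{Tv_n}\to\infty$. Since $\N$ is an object of $\Borel$ and every map out of the discrete space $\N$ is Borel, the curve $c\colon\N\to V$ with $c(n)=v_n$ is measurable and bounded, so $c\in\ti V(\N)$; then $\Phi_\N c\in\ti W(\N)$, whence $\sup_n\n{(\Phi_\N c)(n)}<\infty$, while the pointwise identity gives $(\Phi_\N c)(n)=Tv_n$ --- a contradiction. Thus $\n T<\infty$ and the morphism $\ti T\in\Lin(\ti V,\ti W)$ is defined. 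Finally, for any Borel space $X$ and $c\in\ti V(X)$ the two elements $\ti T_X(c)=T\cp c$ and $\Phi_X(c)$ of $\ti W(X)$ are honest functions $X\to W$, and by the pointwise identity they agree at every point of $X$; hence $\ti T_X(c)=\Phi_X(c)$, i.e.\ $\ti T=\Phi$. The essential difficulty is concentrated in the boundedness argument: a morphism of models carries no a priori norm estimate, and what forces fullness is precisely that the discrete space $\N$ lies in the site, so that an unbounded $T$ would produce a bounded measurable curve whose image under $\Phi$ escapes $\ti W(\N)$.
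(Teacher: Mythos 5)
Your proposal is correct and follows essentially the same route as the paper: the pointwise identity via Dirac kernels is exactly the paper's appeal to concreteness, linearity comes from naturality against finitely supported measures (the paper's ``commutes with integration''), and your $\N$-argument for norm-boundedness is the paper's remark that a bounded sequence of points must map to a bounded sequence of points, just spelled out as an explicit contradiction. No gaps; you have simply made the paper's compressed proof fully explicit.
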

\begin{proof}
Faithfulness is obvious, so it remains to show that the functor is full. Suppose that $\ti T: \ti V \to \ti W$ is a linear space map. Since $\ti V$ and $\ti W$ are concrete, the map $\ti T$ is determined by its action on the underlying sets. Since $\ti T$ commutes with integration, it preserves finite linear combinations, which means it is linear. Moreover, it is bounded, because the image of any bounded sequence of points must be a bounded sequence of points.
\qed\end{proof}

\subsection{Spaces of measures}
The Yoneda embedding gives some trivial examples of linear spaces, namely the spaces $MY$ of Radon measures on $Y$, where $Y$ is a Borel space. We have by definition that
\[\Lin(MY, \R) \cong \Borel(Y,U \R),\]
where $\R$ is the linear space corresponding to the separable Banach space $\R$ and $U\R$ is its underlying space. The right hand side is the set of bounded measurable maps from $Y$ to $\R$.

On the other hand, let $M_{\Ban} Y$ be the Banach space of Radon measures with the total variation norm. The space of arbitrary Banach space maps $l: M_{\Ban}Y \to \R$ contains many unpleasant characters. For example, let $g: Y \to \R$ be an arbitrary  bounded function, not necessarily measurable, and let $\R^{(Y)} \subset M_{\Ban} Y$ be the linear subspace consisting of finitely-supported measures on $Y$. For any measure $\mu$ in $\R^{(Y)}$, the integration
\[\sps{\sint g} (\mu) := \int g(y) d\mu(y)\]
is perfectly well-defined, because $\mu$ is finitely supported. Moreover, it is clear that $\sint g: \R^{(Y)} \to \R$ is a bounded linear fuctional, because $g$ is bounded. By the Hahn-Banach theorem, it extends to a bounded linear functional $\ti{\sint g}:M_{\Ban} Y\to \R$. Unless $g$ is chosen to be measurable,  the function $\ti {\sint g}$ does {\em not} correspond to a morphism in $\Lin(M Y,\R)$. In fact, composing with $\ti {\sint g}$ destroys the measurability of curves. We have
\[\sps{\ti {\sint g} }\cp \d(y) = g(y)\]
by construction, and $g: Y \to \R$ is not measurable. Essentially, we need to choose the weak topology on $MY$ instead of the strong topology, by {\em defining} the dual of $MY$ to be $\Borel(Y,U\R)$. However, the resulting locally convex space will not be bornological, and in particular we have left the setting of ``convenient'' spaces.
\section*{Acknowledgements}
The author is greatly indebted to Nir Gadish. This paper could not have been written without his encouragement and interest throughout its conception. 
\providecommand{\bysame}{\leavevmode\hbox to3em{\hrulefill}\thinspace}
\providecommand{\MR}{\relax\ifhmode\unskip\space\fi MR }
\providecommand{\MRhref}[2]{%
  \href{http://www.ams.org/mathscinet-getitem?mr=#1}{#2}
}
\providecommand{\href}[2]{#2}

\bibliographystyle{amsplain}

\begin{thebibliography}{10}

\bibitem{AdamekBorceuxLackRosicky2002}
Ji{\v r}\i\' Ad{\'a}mek, Francis Borceux, Stephen Lack, and Ji{\v r}\i\'
  Rosick{\'y}, \emph{A classification of accessible categories}, Journal of
  Pure and Applied Algebra \textbf{175} (2002), no.~1, 7--30.

\bibitem{ArtinGrothendieckVerdier1972}
Michael Artin, Alexander Grothendieck, and Jean-Louis Verdier (eds.),
  \emph{S{\'e}minaire de {{G{\'e}om{\'e}trie Alg{\'e}brique}} du {{Bois Marie}}
  - 1963-64 - {{Th{\'e}orie}} des topos et cohomologie {\'e}tale des
  sch{\'e}mas - ({{SGA}} 4)}, vol.~1, Lecture {{Notes}} in {{Mathematics}}, no.
  269, {Springer-Verlag}, {Berlin; New York}, 1972.

\bibitem{Aumann1961}
Robert~J. Aumann, \emph{Borel structures for function spaces}, Illinois Journal
  of Mathematics \textbf{5} (1961), 614--630.

\bibitem{BastianiEhresmann1972}
Andr{\'e}e Bastiani and Charles Ehresmann, \emph{Categories of sketched
  structures}, Cahiers de topologie et g{\'e}om{\'e}trie diff{\'e}rentielle
  cat{\'e}goriques \textbf{13} (1972), no.~2, 104--214.

\bibitem{Birkhoff1935}
Garrett Birkhoff, \emph{On the structure of abstract algebras}, Mathematical
  Proceedings of the Cambridge Philosophical Society \textbf{31} (1935), no.~4,
  433--454.

\bibitem{BorceuxDay1980}
Francis Borceux and Brian Day, \emph{Universal algebra in a closed category},
  Journal of Pure and Applied Algebra \textbf{16} (1980), no.~2, 133--147.

\bibitem{Day1970}
Brian Day, \emph{On closed categories of functors}, Reports of the {{Midwest
  Category Seminar}}, {{IV}}, Lecture {{Notes}} in {{Mathematics}}, {{Vol}}.
  137, {Springer, Berlin}, 1970, pp.~1--38.

\bibitem{Day1972}
\bysame, \emph{A reflection theorem for closed categories}, Journal of Pure and
  Applied Algebra \textbf{2} (1972), no.~1, 1--11.

\bibitem{Day1974}
\bysame, \emph{On closed categories of functors. {{II}}}, Category Seminar
  (Proc. Sem., Sydney, 1972/1973) (1974), 20--54. Lecture Notes in Math., Vol.
  420.

\bibitem{DayStreet1995}
Brian Day and Ross Street, \emph{Kan extensions along promonoidal functors},
  Theory and Applications of Categories \textbf{1} (1995), No.\textbackslash{}
  4, 72--77.

\bibitem{Dynkin1961}
E.~B. Dynkin, \emph{Theory of {{Markov}} processes}, Translated from the
  {{Russian}} by {{D}}. {{E}}. {{Brown}}; Edited by {{T}}. {{K{\"o}v{\'a}ry}},
  {Prentice-Hall, Inc., Englewood Cliffs, N.J.; Pergamon Press,
  Oxford-London-Paris}, 1961.

\bibitem{Ehresmann1968}
Charles Ehresmann, \emph{Esquisses et types des structures alg{\'e}briques},
  Bul. Inst. Politehn. Ia{\c s}i (N.S.) \textbf{14 (18)} (1968), no.~fasc. 1-2,
  1--14.

\bibitem{FrolicherKriegl1988}
Alfred Fr{\"o}licher and Andreas Kriegl, \emph{Linear spaces and
  differentiation theory}, Pure and {{Applied Mathematics}} ({{New York}}),
  {John Wiley \& Sons, Ltd., Chichester}, 1988, A Wiley-Interscience
  Publication.

\bibitem{GabrielUlmer1971}
Peter Gabriel and Friedrich Ulmer, \emph{Lokal pr{\"a}sentierbare
  {{Kategorien}}}, Lecture {{Notes}} in {{Mathematics}}, {{Vol}}. 221,
  {Springer-Verlag, Berlin-New York}, 1971.

\bibitem{Giraud1971}
Jean Giraud, \emph{Cohomologie non ab{\'e}lienne}, {Springer-Verlag, Berlin-New
  York}, 1971.

\bibitem{Giry1982}
Mich{\`e}le Giry, \emph{A categorical approach to probability theory},
  Categorical Aspects of Topology and Analysis ({{Ottawa}}, {{Ont}}., 1980),
  Lecture {{Notes}} in {{Math}}., vol. 915, {Springer, Berlin-New York}, 1982,
  pp.~68--85.

\bibitem{HeunenKammarStatonYang2017}
C.~Heunen, O.~Kammar, S.~Staton, and H.~Yang, \emph{A convenient category for
  higher-order probability theory}, 2017 32nd {{Annual ACM}}/{{IEEE Symposium}}
  on {{Logic}} in {{Computer Science}} ({{LICS}}), June 2017, pp.~1--12.

\bibitem{JohnstoneWraith1978}
Peter~T. Johnstone and Gavin~C. Wraith, \emph{Algebraic theories in toposes},
  Indexed Categories and Their Applications, Lecture {{Notes}} in {{Math}}.,
  vol. 661, {Springer, Berlin}, 1978, pp.~141--242.

\bibitem{Kelly1974}
G.~M. Kelly, \emph{Doctrinal adjunction}, Category Seminar (Proc. Sem., Sydney,
  1972/1973) (1974), 257--280. Lecture Notes in Math., Vol. 420.

\bibitem{Kelly1982a}
\bysame, \emph{Structures defined by finite limits in the enriched context.
  {{I}}}, Cahiers de Topologie et G{\'e}om{\'e}trie Diff{\'e}rentielle
  \textbf{23} (1982), no.~1, 3--42.

\bibitem{Kelly1982b}
Gregory~Maxwell Kelly, \emph{Basic concepts of enriched category theory},
  London {{Mathematical Society Lecture Note Series}}, vol.~64, {Cambridge
  University Press, Cambridge-New York}, 1982.

\bibitem{Kennison1968}
J.~F. Kennison, \emph{On limit-preserving functors}, Illinois Journal of
  Mathematics \textbf{12} (1968), no.~4, 616--619.

\bibitem{Kuratowski1948}
Casimir Kuratowski, \emph{Topologie. {{I}}. {{Espaces M{\'e}trisables}},
  {{Espaces Complets}}}, Monografie {{Matematyczne}}, Vol. 20,
  {Warszawa-Wroc{\l}aw}, 1948.

\bibitem{LackRosicky2011}
Stephen Lack and Ji{\v r}{\'i} Rosick\'{y}, \emph{Notions of {{Lawvere}}
  theory}, Applied Categorical Structures \textbf{19} (2011), no.~1, 363--391.

\bibitem{Lawvere1963}
F.~William Lawvere, \emph{Functorial semantics of algebraic theories},
  Proceedings of the National Academy of Sciences \textbf{50} (1963), no.~5,
  869--872.

\bibitem{Linton1966}
F.~E.~J. Linton, \emph{Some aspects of equational categories}, Proceedings of
  the {{Conference}} on {{Categorical Algebra}}, {Springer, Berlin,
  Heidelberg}, 1966, pp.~84--94 (en).

\bibitem{Linton1969}
\bysame, \emph{An outline of functorial semantics}, Seminar on {{Triples}} and
  {{Categorical Homology Theory}}, Lecture {{Notes}} in {{Mathematics}},
  {Springer, Berlin, Heidelberg}, 1969, pp.~7--52 (en).

\bibitem{Lucyshyn-Wright2016}
Rory B.~B. {Lucyshyn-Wright}, \emph{Enriched algebraic theories and monads for
  a system of arities}, Theory and Applications of Categories \textbf{31}
  (2016), Paper No. 5, 101--137.

\bibitem{Mackey1957}
George~W. Mackey, \emph{Borel structure in groups and their duals},
  Transactions of the American Mathematical Society \textbf{85} (1957), no.~1,
  134--165.

\bibitem{Power1999}
John Power, \emph{Enriched {{Lawvere}} theories}, Theory and Applications of
  Categories \textbf{6} (1999), 83--93.

\bibitem{Shulman2008}
Michael Shulman, \emph{Framed bicategories and monoidal fibrations}, Theory and
  Applications of Categories \textbf{20} (2008), No. 18, 650--738.

\bibitem{Shulman2013}
\bysame, \emph{Enriched indexed categories}, Theory and Applications of
  Categories \textbf{28} (2013), 616--696.

\end{thebibliography}
\end{document}